\theoremstyle{plain}
\numberwithin{equation}{section}
\newtheorem{theorem}{Theorem}[section]
\newtheorem{lemma}[theorem]{Lemma}
\newtheorem{corollary}[theorem]{Corollary}
\newtheorem{proposition}[theorem]{Proposition}
\newtheorem{definition}[theorem]{Definition}
\newtheorem{remark}[theorem]{Remark}
\newtheorem{Notation}[theorem]{Notation}
\newcommand{\beq}{\begin{equation}}
\newcommand{\eeq}{\end{equation}}
\newcommand{\tn}[1]{\left.{#1}\right|_{t=0}}
\newcommand{\ttau}[1]{\left.{#1}\right|_{t=\tau}}
\newcommand{\R}{\mathbb{R}}
\newcommand{\Real}{\mathrm{Re}\,}
\newcommand{\Imag}{\mathrm{Im}\,}
\newcommand{\Id}{\mathrm{Id}}
\newcommand\Sym{\text{Sym}^2(M)}
\newcommand{\g}[2]{\left\langle #1,#2\right\rangle}
\newcommand{\norm}[1]{\left\Vert #1\right\Vert}
\newcommand{\abs}[1]{\left| #1\right|}
\newcommand{\dnu}{\mathrel{\nu\!\nu}}
\newcommand{\p}[1]{\left( #1\right)}
\newcommand{\dt}{\partial_t}
\newcommand{\ddt}{\left.\partial_t\right|_{t=0}\:}
\newcommand\SF{\text{\Gemini}}
\newcommand\MC{\mathcal{H}}
\newcommand\bnab{\overline{\nabla}}
\newcommand\PS{\partial M}
\newcommand\Lie{\mathcal{L}}
\newcommand{\sq}[1]{\left[#1\right]}
\newcommand{\tb}[1]{\left\{#1\right\}}
\newcommand{\pp}[1]{D_{#1}}
\newcommand{\Done}{D_g}
\newcommand{\bal}{\begin{align*} }
\newcommand{\lab}{\end{align*}}
\DeclareMathOperator{\Ric}{Ric}
\DeclareMathOperator{\scal}{scal}
\DeclareMathOperator{\trace}{tr}
\begin{document}
\author{Rasmus Jouttij\"arvi\\ KTH}
\title{Novel Boundary Conditions for the Ricci Flow}
\maketitle

\textbf{Abstract:} If we want to deform a compact Riemannian manifold with boundary using Ricci flow, we first need to decide on appropriate boundary conditions. We would like these conditions to reflect the geometric nature of the flow and allow for a variety of initial data. Importantly, the conditions should be compatible with the expected evolution of Einstein metrics. We propose it's natural to choose those conditions, for which the first variation of certain functionals, such as the Einstein-Hilbert action and Perelman's $\lambda$-functional, does not admit a boundary term. We provide a proof of the short term existence of solutions of the initial boundary value problem, under these conditions.\\

\tableofcontents

\pagebreak

\section{Introduction}

When considering the Ricci flow initial value problem 
\begin{equation}\label{RFIVP}\begin{cases}\partial_t g_t =-2\Ric_{g_t}\\
g_0=g	
\end{cases}\end{equation}
the most natural first step in extending it to a boundary value problem, would be to impose a geometric Dirichlet-type condition on the variation of the induced boundary metric:
\begin{equation}\label{wrongdirichlet}\left.\p{\partial_t g_t}^T\right|_{\PS}=0\end{equation}
where the superscript $T$ here denotes the tangential components of the tensor, while - when applied to a metric - $g^T$ shall mean the induced metric on the boundary. However, this boundary condition poses a problem: In the study of Ricci flow on manifolds without boundary, a certain class of metrics play an important role: Einstein Metrics. That is, metrics satisfying the Einstein equation
\begin{equation}\label{Einsteineq}\Ric_g=\mu g\qquad \qquad \mu\in \R\end{equation}
The importance of these metrics comes from the trivial observation that they remain unchanged under the Ricci flow - at least up to rescaling. There are many natural questions to ask about the behaviour of the Ricci flow starting at, or close to, Einstein metrics (See e.g. \cite{KK}). In the boundary case, however, condition (\ref{wrongdirichlet}) is not compatible with the rescaling behavior of Einstein metrics, under Ricci flow:
$$g_t=(1-2\mu t)g_0$$
 since this would include rescaling the boundary as well. An easy way to get around this problem, is to fix instead the conformal class of the induced metric on the boundary. The problem now, is that this will only give us a system of $\frac{1}{2}n(n-1)-1$ equations, while Ricci flow is defined by a system of $\frac{1}{2}n(n-1)$ equations. We will therefore have to supplement the conformal fixing with one more boundary equation.\\

It is natural then to introduce a geometric Neumann-type boundary condition, perhaps in terms of the second fundamental form of the metric. However, since we only need one more equation, we could simply choose its trace, the mean curvature. It has been shown by Michael Anderson \cite{And2} that the boundary map 
$$\Pi:\mathcal{E}^{m,\alpha}\to \mathcal{C}^{m,\alpha}\p{\PS}\times C^{m-1,\alpha}\p{\PS}\qquad \qquad g\mapsto \p{\sq{g^T},\MC_g}$$
is indeed a smooth Fredholm operator of index $0$. Here $\mathcal{E}^{m,\alpha}$ denotes the space of $C^{m,\alpha}$ Einstein metrics modulo diffeomorphisms that fix the boundary, $\mathcal{C}^{m,\alpha}$ the space of boundary metrics modulo conformal class, and $\MC_g$ the mean curvature of the boundary. This implies that the Einstein equation (\ref{Einsteineq}) with boundary conditions defined by $\Pi$ is a well-posed elliptic boundary value problem. Using the same conditions for the Ricci flow, was studied by Panagiotis Gianniotis in \cite{PG}. That is, he considered the system 
$$\begin{cases} \partial_t g=-2\Ric_g & \text{in } M\times [0,T]\\
\Pi(g)=\p{\sq{\gamma},\eta} & \text{on }\PS \times [0,T]\end{cases} 
$$
for some Riemannian metric $\gamma$ and a function $\eta$, both defined on $\partial M\times [0,T]$. In this paper, we will attempt to choose conditions that only depend on the metric itself. This choice will be informed by a variational perspective on the Ricci flow, and we will impose on (\ref{RFIVP}) the boundary conditions
\begin{equation}\label{BC}\begin{cases}
\partial_t g_t^T=\frac{1}{n-1}\trace_{g_t^T}\p{\partial_t g_t^T} g^T\\
\partial_t\MC_{g_t}=F\p{g_t^T,\partial_t g_t^T}
\end{cases}\end{equation}
for some expression $F$, since we would still have the freedom of choosing how exactly the mean curvature should evolve along the Ricci flow.\\ 

The first half of this paper is dedicated to the argument, that the most natural way for the mean curvature to evolve is by
\begin{align}\label{mainequation}\partial_t \MC_{g_t}=-\frac{1}{2(n-1)}\trace_{g_t^T}\p{\partial_t g_t^T}\MC_{g_t}.
\end{align}
This condition is not only invariant under parabolic rescaling $g_t\mapsto rg_{r^{-1}t}$ (both sides scale by $r^{-3/2}$), but it also satisfies the expected rescaling behaviour of Einstein metrics under Ricci flow:
$$\partial_t\MC_{g_t}=\frac{\mu}{\p{1-2\mu t}^{3/2}}\MC_{g_0}=-\frac{1}{2(n-1)}\trace_{g_t^T}\p{-2\mu g_0^T}\MC_{g_t}.$$
The following will serve as an initial justification of (\ref{mainequation}), and a reminder of the role of Einstein metrics in the study of Ricci flow. We will expand on this in section \ref{lambdasection}. A usual first encounter with Einstein metrics, comes from the calculus of variations related to the Einstein-Hilbert action of a Riemannian manifold $\p{M,g}$:
$$S_g:=\int_M \scal_g \; dV_g.$$
On a manifold without boundary, the Einstein condition (\ref{Einsteineq}) is equivalent to being a critical point of $S_g$, among metrics of the same volume. In fact, Einstein metrics are minimizers of $S_g$ in their own volume-prescribed conformal class (See \cite[Theorem 4.21]{Bes}). The first variation of the Einstein-Hilbert action in the direction $h\in\Sym$, is given by
$$S'_g(h)=-\int_M \g{G_g}{h}-\delta_g\p{\delta_g h+\nabla \trace_g h}\; dV_g$$
where $G_g=\Ric_g-\frac{1}{2}\scal_g\cdot g$ is the Einstein tensor and $\delta_g$ denotes divergence. The first variation formula can e.g. be found in \cite[Proposition 4.17]{Bes}. Usually, we would let Stokes' theorem take care of the divergence term, however in the case of a manifold with boundary, one has: 
$$S'_g(h)=-\int_M \g{G_g}{h}\; dV_g-\int_{\PS}\g{\delta_g h+\nabla \trace_g h}{\nu}\; dA_g$$
where $\nu$ is the outwards pointing unit normal field. The boundary term
\begin{equation}\label{Done}\Done(h):=\g{\delta_g h+\nabla\trace_g h}{\nu}\end{equation}
shall play an important role in this article, as it also crops up in the first variation of Perelman's $\lambda$-functional:
\begin{equation}\label{ldef}\lambda_g:=\inf_{f\in C_g^\infty\p{M}}\int_{M}\p{\scal_g+|\nabla f|_g^2}e^{-f}\; dV_g\end{equation}
	where $C_g^\infty\p{M}$ is the space of smooth functions satisfying $\int_M e^{-f}\; dV_g=1$.
In the next section (specifically, in Proposition \ref{D1eq}), we will show that for a variation $h$, satisfying (\ref{BC})-(\ref{mainequation}), the first variation $\lambda'_g(h)$ admits no boundary term.\\

The reader might recognize $\Done$ as the orthogonal complement of the momentum constraint equation for Einstein metrics (also known as the Codazzi-Mainardi equation). That is, for any tangential vector field $\tau$, the second fundamental form $\SF_g$ satisfies:
$$\g{\delta_g\SF_g+\nabla \trace_{g}\SF_g}{\tau}=\Ric_g\p{\tau,\nu}=0$$
More relevant, perhaps, is the role it plays as the integrand in the definition of ADM-mass for asymptotically flat metrics:
$$m_{ADM}(g)=-\frac{1}{16\pi}\lim_{r\to \infty}\int_{S_r}D_{\hat{g}}(g)\; dA_{\hat{g}}$$
where $\hat{g}$ denotes the Euclidean metric and $S_r$ is a geodesic sphere. \\

As an argument for the soundness of the proposed boundary conditions, we prove the following theorems.

\begin{theorem}[Existence and uniqueness] Let $g$ be a smooth compatible Riemannian metric, then, for $p>n$, there exist a unique solution to the initial boundary value problem
\begin{equation}\label{Ricciflowintro}
\begin{cases}
\partial_t g_t=-2\Ric_{g_t} & \text{in } M\times [0,T]\\
\begin{rcases}
	\p{\partial_t g_t}^T=\frac{1}{n-1}\trace_{g_t^T}\p{\partial_t g_t^T}g_t^T\\
	\partial_t\MC_{g_t}=-\frac{1}{2(n-1)}\trace_{g_t^T}\p{\partial_t g_{t}^T}\MC_{g_t}
\end{rcases} &\text{on } \partial M\times [0,T]\\
\left.g_{t}\right|_{t=0}=g
\end{cases}
\end{equation}	
Such that $\partial_t^{m}g_t\in L^p\p{M\times [0,T]}$ for $0\leq m\leq 2$ and $g_t\in W^{4,p}\p{M}$ for all $t\in [0,T]$.
\end{theorem}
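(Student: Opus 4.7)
The plan is to apply DeTurck's trick in order to turn the degenerate-parabolic Ricci flow into a strictly parabolic boundary value problem, and then to invoke anisotropic $L^p$ parabolic theory after supplementing the geometric boundary conditions with gauge-fixing ones. Fixing the background $\bar g := g$, I would consider the Ricci-DeTurck flow
\[
\partial_t \tilde g = -2\Ric_{\tilde g} + \mathcal{L}_{W(\tilde g,\bar g)}\tilde g, \qquad W(\tilde g,\bar g)^k := \tilde g^{ij}\bigl(\Gamma^k_{ij}(\tilde g) - \Gamma^k_{ij}(\bar g)\bigr),
\]
which is strictly parabolic. The boundary conditions (\ref{BC})--(\ref{mainequation}) amount to $\frac{n(n-1)}{2}$ scalar equations (the conformal constraint is rank $\frac{n(n-1)}{2}-1$ and the mean-curvature evolution adds one more). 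Since the system has order two on the rank-$\frac{n(n+1)}{2}$ bundle of symmetric tensors, I would supplement them with the $n$ gauge conditions $W(\tilde g,\bar g)|_{\PS} = 0$. The point of this choice is that the vector field $-W(\tilde g_t,\bar g)$ used to convert DeTurck solutions back to Ricci flow solutions then vanishes on $\PS$, so its flow $\varphi_t$ fixes $\PS$ pointwise and the pullback $\varphi_t^*\tilde g$ inherits the geometric conditions of $\tilde g$ without distortion.

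The analytic heart of the proof is verifying that the full linearized boundary operator satisfies the Lopatinski-Shapiro complementing condition for the linearized Ricci-DeTurck operator. After freezing coefficients at a boundary point, choosing Fermi coordinates so that $g$ is Euclidean and $\PS$ is $\{x^n=0\}$, and Fourier-transforming tangentially and in time, the interior equation reduces to a system of ODEs on the half-line whose bounded solutions span a $\frac{n(n+1)}{2}$-dimensional space, and one must show that the $\frac{n(n+1)}{2}\times\frac{n(n+1)}{2}$ boundary matrix cut out by our conditions is invertible for every nonzero tangential frequency and every dual-time variable in the closed right half-plane. This is the main obstacle: the conformal constraint gives a rank-$(\frac{n(n-1)}{2}-1)$ order-zero block on the purely tangential components, the mean-curvature equation contributes a single first-order scalar line coupling $\trace_{g^T}(\partial_t \tilde g^T)$ with normal derivatives of $\tilde g^T$ through $\MC_g$, and the gauge conditions $W=0$ provide $n$ further first-order relations which close up the system on the normal-tangential and normal-normal blocks. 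The parabolic rescaling invariance noted below (\ref{mainequation}) already indicates that no scaling obstruction appears, so the linear-algebra verification should go through, though it is the technical crux.

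Once Lopatinski-Shapiro is established, the Solonnikov / Ladyzhenskaya-Solonnikov-Ural'tseva framework for quasilinear parabolic systems in $L^p$-based anisotropic Sobolev spaces yields unique solvability of the Ricci-DeTurck problem with $\tilde g_t\in W^{4,p}(M)$ and $\partial_t^m \tilde g_t\in L^p(M\times[0,T])$ for $0\le m\le 2$: the linearized estimates combine with a Banach contraction on a small interval to handle the quasilinearity, the choice $p>n$ giving $W^{4,p}\hookrightarrow C^{3,\alpha}$ and thereby controlling both the nonlinear coefficients in the interior and the boundary expression $F$, while the smooth compatibility of $g$ supplies the matching of initial and boundary data needed for this regularity class. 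Finally I would integrate $-W(\tilde g_t,\bar g)$ to obtain diffeomorphisms $\varphi_t$ fixing $\PS$ and define $g_t:=\varphi_t^*\tilde g_t$, which then solves (\ref{Ricciflowintro}) with the claimed regularity. Uniqueness for the Ricci flow itself follows from the standard argument that any two solutions can be gauge-fixed to the same Ricci-DeTurck solution, whose uniqueness has already been established.
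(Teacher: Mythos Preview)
Your proposal is correct and follows essentially the same strategy as the paper: DeTurck trick with the gauge boundary condition that the DeTurck vector vanishes on $\partial M$, verification of the complementarity (Lopatinski--Shapiro) condition for the linearized system, Solonnikov-type $L^p$ theory combined with a contraction argument for the quasilinear problem, pull-back along the DeTurck flow to recover the Ricci flow, and uniqueness via the harmonic map heat flow. The paper carries out the complementarity computation explicitly (your acknowledged ``technical crux''), and uses a time-dependent background family $\tilde g_t$ rather than the fixed $\bar g=g$ you propose; the latter choice matters mainly for the separate boundary-regularity theorem, not for existence and uniqueness per se.
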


For an initial metric to be compatible with the Ricci flow initial boundary value problem, we require that it is possible for the system (\ref{Ricciflowintro}) to start at time $0$. In particular, this requires that the zeroth and first order initial time derivatives of the flow equation, satisfy the correspondingly differentiated boundary equations. Thus, for example, we need the initial Ricci curvature tensor to satisfy
\begin{align*}\Ric_g^T=&\frac{1}{n-1}\trace_{g^T}\p{\Ric_g^T}g^T\\ \p{\Delta_L \Ric_g}^T=&\frac{1}{n-1}\p{\trace_{g^T}\p{\p{\Delta_L\Ric_g}^T}+2\abs{\Ric_g^T}_{g^T}^2-2\trace_{g^T}\p{\Ric_g^T}^2}g^T\end{align*}
 and similarly for the second boundary condition. The Lichnerowicz Laplacian shows up as the result of the evolution equation for the Ricci tensor under the Ricci flow \cite[Theorem 1.174]{Bes}:
$$\left.\partial_t \Ric_{g_t}\right|_{t=0}=\Delta_L\Ric_g$$
Higher order compatibility conditions are defined analogously and we refer to Definition \ref{conditions} for a precise statement of the conditions.

\begin{theorem}[Boundary regularity] Let $g_t$ be a solution to the Ricci flow initial boundary value problem (\ref{Ricciflowintro}) for a smooth initial metric $g_0$, satisfying compatibility conditions up to order $2k$, then $g_t$ is of H\"older class $C^{2k+1,\alpha}$ up to the boundary.
\end{theorem}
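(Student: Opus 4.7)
The plan is to bootstrap from the regularity guaranteed by the existence theorem using parabolic Schauder estimates, with induction on $k$. The preliminary move is the DeTurck trick: fix a smooth background metric $\tilde g$ and replace the Ricci flow in (\ref{Ricciflowintro}) by the strictly parabolic quasi-linear system
\begin{equation*}
\partial_t g = -2\Ric_g + \Lie_{W(g,\tilde g)} g,
\end{equation*}
where $W$ is the DeTurck vector field built from the difference of Christoffel symbols of $g$ and $\tilde g$. Since the boundary conditions in (\ref{Ricciflowintro}) are phrased in terms of $\partial_t g$, substitute the right-hand side of the modified flow into them to obtain genuine spatial differential conditions on $g$. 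Verify (or import from the well-posedness analysis underlying the existence theorem) that the resulting boundary operator satisfies the Lopatinskii--Shapiro complementing condition; this is what makes parabolic Schauder theory applicable up to $\PS$.

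Proceed by induction on $k$. The base case $k=0$ follows immediately from the existence theorem: $g_t\in W^{4,p}(M)$ with $p>n$, and Sobolev embedding into $C^{3,\alpha}\p{\bar M}$ is more than enough to furnish $C^{1,\alpha}$ spatial regularity, while the flow equation converts this into matching regularity in $t$. For the inductive step, assume the statement at level $k-1$ and that $g_0$ satisfies compatibility to order $2k$. Differentiate the full initial-boundary value problem $j$ times in $t$ for $1\le j\le k$ to obtain a coupled parabolic system for $\p{g,\partial_t g,\ldots,\partial_t^k g}$. The compatibility conditions to order $2k$ are precisely the algebraic identities needed to ensure that the initial data $\partial_t^j g|_{t=0}$, computed from $g_0$ through the flow, agrees on the spacetime corner $\PS\times\tb{0}$ with the boundary equations obtained by differentiating (\ref{BC})--(\ref{mainequation}) $j$ times. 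Without this corner matching, Schauder estimates would only propagate regularity in the interior.

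The bootstrap itself is by now standard. Apply a parabolic $L^p$ or Schauder estimate to the equation satisfied by $\partial_t^j g$; each step lifts spatial differentiability up to the boundary, and the flow equation converts spatial gains into time gains (and conversely) via the parabolic scaling $\partial_t\sim \nabla^2$. Iterating a finite number of times yields $C^{2k+1,\alpha}$ regularity up to $\PS$ for the DeTurck-modified $g$. If one wishes to distinguish the DeTurck flow from the genuine Ricci flow, pull back by the diffeomorphism $\varphi_t$ generated by $-W(g,\tilde g)$; its regularity inherits from that of $g$ (losing at most one spatial derivative), so $\varphi_t^*g_t$ still enjoys the claimed H\"older regularity.

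The main obstacle is the delicate bookkeeping around the boundary conditions and the corner compatibility. Because (\ref{BC})--(\ref{mainequation}) involve $\partial_t g$ rather than pure spatial jets of $g$, the DeTurck substitution modifies the structure of the boundary operator, and the higher-order compatibility conditions at $\PS\times\tb{0}$ mix iterated Lichnerowicz Laplacians of $\Ric_g$ with contributions from $W$ and from $\MC_g$ and the second fundamental form. Matching these, at each differentiation order, to the spatial boundary operator produced by the DeTurck substitution, and verifying the Lopatinskii--Shapiro condition for this boundary operator uniformly in the order of differentiation, is the technical core of the argument; once these are in hand, the Schauder bootstrap runs automatically.
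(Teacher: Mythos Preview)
Your outline misses the key structural point that makes the DeTurck trick work on a manifold with boundary: the gauge-fixing boundary condition. The Ricci--DeTurck system is a strictly parabolic system of $\tfrac12 n(n+1)$ equations, so it needs $\tfrac12 n(n+1)$ boundary conditions for the Lopatinskii--Shapiro condition to have a chance of holding. The two conditions in (\ref{Ricciflowintro}) give only $\tfrac12 n(n-1)$; the missing $n$ conditions are $\xi_t(u)=0$ on $\partial M$, the vanishing of the DeTurck vector field along the boundary. Without this extra Dirichlet-type condition the complementing condition simply fails (the boundary symbol is not square), and your proposed Schauder bootstrap cannot be started. The paper's Proposition~\ref{upperhalfplane} verifies complementarity precisely for the triple of operators $(A,B,C)$ corresponding to $\beta_g(u)$ (which linearizes $\xi_t$), the conformal-class condition, and the mean-curvature evolution.

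This omission has a second consequence you also pass over. The diffeomorphism $\psi_t$ generated by $\xi^\sharp(u_t)$ must fix $\partial M$ so that $\psi_t^*u_t$ satisfies the same boundary conditions as $u_t$; that is exactly what $\xi_t(u)=0$ on $\partial M$ guarantees. And your claim that the regularity of $\psi_t$ ``inherits from that of $g$'' is not free: the paper obtains it by deriving a separate linear parabolic IBP for the one-form $\xi$ (namely $\partial_t\xi+\Delta_u\xi=\Ric_u\cdot\xi^\sharp+P$ with Dirichlet data), checking compatibility for \emph{that} system, and applying Solonnikov's theory to get $\xi\in C^{(2k+2+\alpha)}$, hence $\psi_t\in C^{(2k+2+\alpha)}$ and $g_t=\psi_t^*u_t\in C^{(2k+1+\alpha)}$. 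Finally, transferring the compatibility conditions from the Ricci flow to the Ricci--DeTurck flow is not automatic with a fixed background $\tilde g$: the paper chooses a \emph{time-dependent} $\tilde g_t$ with prescribed initial time-jets so that $\partial_t^m u_t|_{t=0}=\partial_t^m g_t|_{t=0}$ for $m\le k+1$, which is what forces the DeTurck solution to inherit compatibility to order $2k$.
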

The exact statements can be found in Theorems \ref{existanduniq} and \ref{compandreg}, respectively.
These results are closely linked to \cite[Theorem 1.2 and 1.3]{PG}, the main difference being the order of the boundary conditions. While Gianniotis prescribes the mean curvature itself, we choose to prescribe the evolution of it. Note that for both types of boundary conditions, existence and uniqueness of the Ricci-deTurck flow only requires zeroth order compatibility of the initial and boundary conditions. However, when it comes to the Ricci flow, prescribing the evolution of the mean curvature will require an additional degree of compatibility, to obtain uniqueness of the solution. We show that Einstein metrics naturally satisfy the necessary conditions (See Remark \ref{compatiblemetric}).\\

The paper is organised as follows: After discussing how the variational perspective informs our choice of boundary conditions (Sections 2 and 3), we shall proceed, in Section 4, with setting the stage for solving parabolic initial boundary value problems on compact manifolds with boundary. We will, in particular, introduce the relevant function spaces and relay relevant results from the literature, specifically \cite{Sol1}. Section 5 will center on solving the (strictly) parabolic analogue of Ricci flow - Ricci deTurck flow. This will allow us, in the last section, to prove the aforementioned theorems on existence and boundary regularity of the Ricci flow initial boundary value problem (\ref{Ricciflowintro}).\\

\textbf{Aknowledgement:} The author would like to express his gratitude towards his advisor K. Kr\"oncke, for his advice and encouragement. He would also like to thank L. Yudowitz for his careful reading and insightful comments.

\pagebreak

\section{Preliminaries}

As should be expected, we begin by outlining our notational and definitional conventions. We particularly stress the sign convention for the divergence, as this usually varies from author to author. Throughout, $M$ will denote a compact Riemannian $n$-manifold with boundary

\begin{Notation} For a tensor field $h\in T^{(k,l)}M$ and Riemannian metric $g\in \mathcal{M}(M)$, the space of metrics on $M$, we adhere to the following sign convention:
$$\delta_g h=-\trace_{12}\nabla_g h,\qquad\qquad \Delta_g h=\delta_g\nabla_g h\qquad \text{and}\qquad \beta_g=\delta_g+\frac{1}{2}\nabla \trace_g$$
and we note that we will omit writing the musical isomorphism of the divergence $\p{\delta_g h}^\sharp$ of a $(0,2)$-tensor field $h$, when it is clear that it should be considered as a vector field. We use the same symbol $\nabla$ for all derivations naturally induced by the connection, including the gradient of functions. \\

For a vector field $X\in \mathfrak{X}(M)$, and symmetric $2$-tensors $h,k\in \Sym$, we define the following contraction:
$$h\cdot X:=h(X,\cdot)^\sharp$$ 

We will make use of the co-divergence operator $\delta^*_g:\Omega^1(M)\to \Sym$, defined as the symmetrization of the induced connection (See \cite[Definition 1.59]{Bes}). It satisfies the relation
\begin{equation}\label{codivlie}\delta_g^* \alpha=\frac{1}{2}\Lie_{\alpha^\sharp}g\end{equation}

Let $\nu $ be the outward pointing unit normal field along $\PS$, then the second fundamental form and mean curvature of $g$ is given by
$$\SF_g\p{X,Y}=g\p{\nabla_X \nu,Y}\qquad \text{and}\qquad \MC_g=\trace_{g^T}\SF_g .$$
Though it would formally require an extension of $\nu$ away from the boundary, it is often advantageous to view $\SF_g$ as a Lie derivative: 
$$\SF_g=\frac{1}{2}\Lie_\nu g\qquad \text{and}\qquad \MC_g=\frac{1}{2}\trace_{g^T}\Lie_\nu g.$$
It is possible to show that this representation is independent of the chosen extension. It also gives credence to the view, that a condition on the second fundamental form is a "geometric Neumann condition".\\   

We will stick to Gianniotis' indexing practice of using latin indices for coordinates in the interior of $M$, while using Greek indices for the boundary.\\

In the latter sections of the paper, we will rely heavily on Hamilton's $\star$-notation (see \cite{Ham} and \cite{CLN}): For tensors $A,B$, we use $A\star B$ to denote a linear combination of contractions of $A\otimes B$.
\end{Notation}

As was described in the introduction, the boundary conditions we will impose on the Ricci flow shall be: the preservation of the conformal class of the induced metric on the boundary, and a prescribed evolution of the mean curvature. For completeness, we therefore include the derivation of the first variation of the mean curvature:

\begin{lemma} \label{MCfirstvar}
Let $h\in \Sym$ be the directional tensor field of a variation preserving the conformal class of $g^T$. Then the first variation of the mean curvature is given by the formula
	\begin{equation}\label{firstvarMC}\MC_g'(h)=\frac{1}{2}\trace_{g^T}(\nabla_\nu h)+\delta_{g^T}(h\cdot \nu)-\frac{1}{2}h(\nu,\nu)\MC_g\end{equation}
	where $\delta_{g^T}\p{h\cdot \nu}$ denotes the divergence of $(h\cdot \nu)^T$ with respect to the metric $g^T$
\end{lemma}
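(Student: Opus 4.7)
I proceed by direct computation in Fermi coordinates $(x^\alpha,r)$ adapted to $g$, in which $g=dr^2+g_r$, $\nu=\partial_r$ at $r=0$, $\SF_{\alpha\beta}=\tfrac{1}{2}\partial_r g_{\alpha\beta}|_{r=0}$, and along $\partial M$ the only Christoffel symbols involving a normal index are $\Gamma^\gamma_{\alpha r}=\SF_\alpha{}^\gamma$ and $\Gamma^r_{\alpha\beta}=-\SF_{\alpha\beta}$. Setting $g_s:=g+sh$, I write $\MC_{g_s}=(g_s^T)^{\alpha\beta}\,\SF_{g_s,\alpha\beta}$ and differentiate at $s=0$; three things vary: the inverse boundary metric (derivative $-(h^T)^{\alpha\beta}$), the ambient Christoffels hidden inside $\SF_{g_s,\alpha\beta}=g_s(\nabla^{g_s}_{\partial_\alpha}\nu_s,\partial_\beta)$, and the outward unit $g_s$-normal $\nu_s$ itself.

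To pin down $\dot\nu$, I write $\nu_s=a_s\partial_r+b_s^\gamma\partial_\gamma$ and linearize the two defining conditions $g_s(\nu_s,\partial_\alpha)=0$ and $g_s(\nu_s,\nu_s)=1$ at $s=0$, obtaining $\dot a=-\tfrac{1}{2}h(\nu,\nu)$ and $\dot b^\gamma=-(g^T)^{\gamma\delta}h(\nu,\partial_\delta)$, that is
\[
\dot\nu=-\tfrac{1}{2}h(\nu,\nu)\,\nu-(h\cdot\nu)^T.
\]
Using the standard identity $\dot\Gamma^k_{ij}=\tfrac{1}{2}g^{kl}(\nabla_i h_{jl}+\nabla_j h_{il}-\nabla_l h_{ij})$ together with the Fermi Christoffels above, I differentiate $\SF_{g_s,\alpha\beta}$. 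The $\SF\star h$ contributions coming from $\dot a\,\SF_{\alpha\beta}$, from $h_{\beta k}\Gamma^k_{\alpha r}$, and from the tangential piece $\nabla^{g^T}\dot b$ cancel in pairs, leaving the symmetric expression
\[
\SF'_g(h)_{\alpha\beta}=\tfrac{1}{2}h(\nu,\nu)\,\SF_{\alpha\beta}+\tfrac{1}{2}(\nabla_\nu h)_{\alpha\beta}-\tfrac{1}{2}\bigl(\nabla_\alpha h_{\nu\beta}+\nabla_\beta h_{\nu\alpha}\bigr).
\]

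Taking the $g^T$-trace and folding in the inverse-metric variation yields
\[
\MC'_g(h)=-\g{h^T}{\SF_g}_{g^T}+\tfrac{1}{2}h(\nu,\nu)\MC_g+\tfrac{1}{2}\trace_{g^T}(\nabla_\nu h)-(g^T)^{\alpha\beta}\nabla^g_\alpha h_{\nu\beta}.
\]
To turn the last term into $\delta_{g^T}(h\cdot\nu)$, I view $\beta\mapsto h_{\nu\beta}$ as a 1-form on $\partial M$ and compare $\nabla^g$ with $\nabla^{g^T}$; the Fermi Christoffels $\Gamma^\gamma_{\alpha\beta}|_0=(\Gamma^{g^T})^\gamma_{\alpha\beta}$, $\Gamma^r_{\alpha\beta}|_0=-\SF_{\alpha\beta}$ and $\Gamma^\gamma_{\alpha r}|_0=\SF_\alpha{}^\gamma$ give
\[
\nabla^{g^T}_\alpha h_{\nu\beta}=\nabla^g_\alpha h_{\nu\beta}+\SF_\alpha{}^\gamma h_{\gamma\beta}-\SF_{\alpha\beta}\,h(\nu,\nu),
\]
whose trace produces $-(g^T)^{\alpha\beta}\nabla^g_\alpha h_{\nu\beta}=\delta_{g^T}(h\cdot\nu)+\g{\SF_g}{h^T}_{g^T}-\MC_g\,h(\nu,\nu)$. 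Substituting back, the two $\g{h^T}{\SF_g}_{g^T}$ contributions cancel and the two $h(\nu,\nu)\MC_g$ terms combine to $-\tfrac{1}{2}h(\nu,\nu)\MC_g$, giving exactly the formula in the statement.

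The only real difficulty is bookkeeping: tracking every $\SF\star h$ correction arising from the varying normal, the varying Christoffels, and the boundary–ambient connection discrepancy, and verifying that the asymmetric pieces combine into symmetric ones and then cancel. As a side remark, the conformal hypothesis on $h^T$ is never used in the derivation, so the same formula holds for an arbitrary symmetric $h$; only the conformal case is relevant for the applications that follow.
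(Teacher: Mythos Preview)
Your proof is correct, and it takes a genuinely different route from the paper's. The paper works with the Lie-derivative representation $\MC_g=\tfrac{1}{2}\trace_{g^T}\Lie_\nu g$: after computing $\nu'$ exactly as you do, it differentiates $2\MC_{g_t}=\trace_{g_t^T}\Lie_{\nu_t}g_t$ directly and then invokes the identity $\trace_{g^T}(\Lie_\nu h)=\trace_{g^T}(\nabla_\nu h)+\varphi\,\trace_{g^T}(\Lie_\nu g)$, which is where the conformal hypothesis $h^T=\varphi g^T$ enters. This keeps the computation short and entirely basis-free, at the cost of actually using the hypothesis. You instead work in Fermi coordinates, compute $\SF'_g(h)$ explicitly via $\dot\Gamma$, and then reconcile the ambient and boundary connections; this is more laborious bookkeeping, but it makes transparent exactly which $\SF\star h$ terms cancel and, as you correctly observe, shows that the formula holds for arbitrary $h$ without any conformal assumption. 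Both arguments arrive at the same expression; yours is slightly more general, the paper's is slightly more economical.
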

\begin{proof}
Let $g_t =g+th$, then, since $h$ preserves the conformal class of $g^T$, we can choose a stationary local orthonormal basis $\tb{e_\alpha}_\alpha$ for the tangential part of the tangent space at the boundary, such that $g_t\p{e_\alpha,\nu}\equiv 0$. Consequently 
$$\left.\partial_t\right|_{t=0}g_t\p{\nu,e_\alpha}=g\p{\nu',e_\alpha}+h\p{\nu,e_\alpha}\qquad \qquad \left.\partial_t\right|_{t=0}g_t\p{\nu,\nu}=2g\p{\nu',\nu}+h\p{\nu,\nu}$$	
from which it follows that 
$$\nu'=-\p{h\cdot \nu}^T-\frac{1}{2}h\p{\nu,\nu}\nu.$$
Since $h^T$ must be a scalar function multiple of the boundary metric, say $h^T=\varphi g^T$, we have 
$$\left.\partial_t\right|_{t=0}2\MC_{g_t}=\trace_{g^T}\p{\Lie_\nu h+\Lie_{\nu'}g}-\varphi\trace_{g^T}\Lie_\nu g=\trace_{g^T}\p{\nabla_{\nu}h}+2\delta_{g^T}\p{h\cdot \nu}-\frac{1}{2}h\p{\nu,\nu}\trace_{g^T}\p{\Lie_\nu g}$$
since $\trace_{g^T}\p{\Lie_\nu h}=\trace_{g^T}\p{\nabla_\nu h}+\varphi\trace_{g^T}\p{\Lie_\nu g}$.
\end{proof}

As indicated in the preceding proof, we shall usually represent the first boundary condition on the variational tensor $h:=\left.\partial_t g_t\right|_{t=0}$, by the equation $h^T=\varphi g^T$, for a function $\varphi:\partial M\to \R$. We will reserve the symbol $\varphi$ for this exact application, and - when relevant - we will use the equivalent formulation 
$$\varphi_g(h)=\frac{1}{n-1}\trace_{g^T}\p{h^T}$$
With only this assumption, the boundary term of the first variation of the Einstein-Hilbert action (\ref{Done}), provides another indication of why we should choose our second boundary condition to be a prescribed evolution of the mean curvature:

\begin{proposition}\label{D1eq} Suppose $h^T=\varphi g^T$ at the boundary. Then
$$\Done(h)=2\MC_g'(h)-\delta_{g^T}(h\cdot \nu)+\varphi\MC_g$$
In particular
\begin{equation}\label{DgIntegral}\int_{\partial M}\Done(h)\; dA_g=\int_{\PS}2\MC_g'(h)+\varphi\MC_g\; dA_g\end{equation}
\end{proposition}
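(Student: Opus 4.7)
The plan is to unpack $\Done(h) = \langle \delta_g h + \nabla \trace_g h, \nu\rangle$ in a local orthonormal frame adapted to the boundary and match terms against the formula for $\MC_g'(h)$ supplied by Lemma \ref{MCfirstvar}. Choose, at a boundary point, an orthonormal frame $\{e_\alpha, \nu\}$ with $\{e_\alpha\}$ tangent to $\PS$. The strategy is to compute each of $\langle \delta_g h, \nu\rangle$ and $\langle \nabla\trace_g h,\nu\rangle$ separately, then combine.

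The easy piece is $\langle \nabla\trace_g h, \nu\rangle = \nu(\trace_g h) = \trace_g(\nabla_\nu h) = \trace_{g^T}(\nabla_\nu h) + (\nabla_\nu h)(\nu,\nu)$, since $g$ and its trace are parallel. For the divergence term, write $\langle \delta_g h,\nu\rangle = -\sum_\alpha (\nabla_{e_\alpha}h)(e_\alpha,\nu) - (\nabla_\nu h)(\nu,\nu)$ using the sign convention $\delta_g h = -\trace_{12}\nabla_g h$. The first sum is the one that needs work: expanding
\[
(\nabla_{e_\alpha} h)(e_\alpha,\nu) = e_\alpha(h(e_\alpha,\nu)) - h(\nabla_{e_\alpha}e_\alpha,\nu) - h(e_\alpha,\nabla_{e_\alpha}\nu),
\]
I would decompose $\nabla_{e_\alpha}\nu = \sum_\beta \SF_g(e_\alpha,e_\beta)e_\beta$ and $\nabla_{e_\alpha}e_\alpha = \nabla^T_{e_\alpha}e_\alpha - \SF_g(e_\alpha,e_\alpha)\nu$ using the definition of the second fundamental form and the fact that $|\nu|_g \equiv 1$. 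The hypothesis $h^T = \varphi g^T$ collapses $\sum_\alpha h(e_\alpha,\nabla_{e_\alpha}\nu)$ to $\varphi \MC_g$. The remaining intrinsic part $\sum_\alpha[e_\alpha(h(e_\alpha,\nu)) - h(\nabla^T_{e_\alpha}e_\alpha,\nu)]$ is, by the definition of $\delta_{g^T}$ applied to the tangential $1$-form $h(\cdot,\nu)$, equal to $-\delta_{g^T}(h\cdot\nu)$. The normal-component piece of $\nabla_{e_\alpha}e_\alpha$ contributes $\MC_g\, h(\nu,\nu)$.

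Assembling everything yields
\[
\Done(h) = \trace_{g^T}(\nabla_\nu h) + \delta_{g^T}(h\cdot\nu) + \varphi\MC_g - h(\nu,\nu)\MC_g,
\]
and a direct comparison with $2\MC_g'(h) = \trace_{g^T}(\nabla_\nu h) + 2\delta_{g^T}(h\cdot\nu) - h(\nu,\nu)\MC_g$ from Lemma \ref{MCfirstvar} gives the claimed pointwise identity. For the integral formula (\ref{DgIntegral}), the term $\delta_{g^T}(h\cdot\nu)$ integrates to zero over the closed boundary manifold $\PS$ by the divergence theorem, leaving only $\int_{\PS}\bigl(2\MC_g'(h) + \varphi\MC_g\bigr)\,dA_g$.

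The only delicate step is the frame bookkeeping: one has to keep the tangential and normal components straight and use $h^T = \varphi g^T$ precisely at the step where $\sum_\alpha h(e_\alpha,\nabla_{e_\alpha}\nu)$ is evaluated. Everything else is either parallelism of the metric or the standard definitions of $\SF_g$, $\MC_g$ and $\delta_{g^T}$.
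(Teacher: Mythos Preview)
Your proof is correct and follows essentially the same route as the paper's: expand $\langle\delta_g h,\nu\rangle$ at the boundary, add the trace term, and compare with the first-variation formula of Lemma~\ref{MCfirstvar}. The only difference is that the paper states the expansion $\langle\delta_g h,\nu\rangle=\delta_{g^T}(h\cdot\nu)-h(\nu,\nu)\MC_g-\nabla_\nu h(\nu,\nu)+\varphi\MC_g$ directly, whereas you derive it explicitly in an adapted orthonormal frame; the subsequent comparison with $2\MC_g'(h)$ and the application of the divergence theorem on $\PS$ are identical.
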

\begin{proof} Recall from (\ref{Done}), that $D_g(h)=\g{\delta h}{\nu}+\trace_g\nabla_\nu h$. By expanding the first term, obtain
$$\g{\delta h}{\nu}=\delta_{g^T}(h\cdot \nu)-h(\nu,\nu)\MC_g-\nabla_\nu h(\nu,\nu)+\varphi \MC_g$$
Adding the second term yields
\begin{align*}\Done(h)=\g{\delta_g h}{\nu}+\trace_g \nabla_\nu h=\delta_{g^T}(h\cdot \nu)-h(\nu,\nu)\MC_g+\trace_{g^T}(\nabla_\nu h)+\varphi\MC_g\end{align*}
Lastly, we apply the first variation of the mean curvature (\ref{firstvarMC}).
\end{proof}

It is in fact possible to study the critical metrics of the Einstein-Hilbert action, even in the light of this boundary term. One possibility is to assume that the variation preserves a minimal boundary, so that the terms of the integrand in (\ref{DgIntegral}) vanish separately. It has recently been shown by Akutagawa \cite{Aku}, that this situation still allows for an Obata-type theorem. Just like for manifolds without boundary, it states that Einstein metrics are the unique minimizers of the Einstein-Hilbert action in their conformal class - at least up to rescaling.\\

Another possibility is to amend the Einstein-Hilbert Action:
$$\int_M \scal_g\; dV_g+\frac{2}{n-1}\int_{\PS}\MC_g\; dA_g$$
This was first introduced by J. F. Escobar in \cite{Esc} and studied extensively by H. Araújo in \cite{Ara}. The advantage of this functional, is that the second part of the integrand in (\ref{DgIntegral}) cancels, seemingly allowing metrics of non-minimal boundary. However, as Araújo proves, if one does not assume mean curvature preservation, the only critical points are Einstein metrics with totally geodesic boundary (at least under the usual volume-preserving assumption). What then, if we additionally assume that the variation preserves the mean curvature of the boundary? Then - unfortunately - it can be shown that we lose the aforementioned property of an Obata-type theorem. I.e. Einstein metrics are no longer the unique minimizers in their conformal class.

\pagebreak

\section{The $\lambda$-functional}\label{lambdasection}

We now return to the subject at hand: Ricci flow on manifolds with boundary. It is well attested, that Ricci flow is not a gradient flow in the strict sense. But Perelman discovered in \cite{Per}, that it can in fact be studied as if it was a gradient flow for the $\lambda$-functional:

\begin{definition}[The $\lambda$-functional] Given a compact Riemannian manifold $M$ with boundary, we define the functional
\begin{equation}\label{lambdadef}\lambda_g:=\inf_{f\in C^\infty_g\p{M}}\int_{M}\scal_g+|\nabla f|_g^2\; dW_g\end{equation}
where $dW_g:=e^{-f}dV_g$ and $ C^{\infty}_g\p{M}$ is the space of smooth functions on $M$, satisfying the volume constraint  
\begin{equation}\label{e-f}\int_{M}1\; dW_g=\int_M e^{-f}\: dV_g=1\end{equation}
\end{definition}

\begin{lemma}\label{minimizer} For a given smooth Riemannian metric $g$, a unique minimizer $f_g\in C^\infty_g\p{M}$ in (\ref{lambdadef}) exists, and it satisfies 
\begin{equation}\label{fproperties}\begin{cases}
-2\Delta_g f_g-|\nabla f_g|_g^2+\scal_g=\lambda_g & \text{in } M\\ \nabla_\nu f_g=0& \text{on }\partial M \end{cases}\end{equation}
\end{lemma}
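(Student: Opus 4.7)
The plan is to reduce the problem to a standard linear eigenvalue problem via the Hopf–Cole–type substitution $u = e^{-f/2}$. Under this change of variable, $f = -2\ln u$, $dW_g = u^2\, dV_g$, and $|\nabla f|_g^2\, e^{-f} = 4|\nabla u|_g^2$. Consequently the functional and constraint become
\begin{align*}
\int_M \p{\scal_g + |\nabla f|_g^2}\, dW_g &= \int_M \p{4|\nabla u|_g^2 + \scal_g\, u^2}\, dV_g,\\
\int_M 1\, dW_g &= \int_M u^2\, dV_g = 1.
\end{align*}
Thus $\lambda_g$ is, at least formally, the infimum of the Dirichlet-type quadratic form
$Q_g(u) := \int_M \p{4|\nabla u|_g^2 + \scal_g u^2}\, dV_g$
over positive $u \in C^\infty(M)$ with $\|u\|_{L^2(dV_g)} = 1$.

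First I would extend the variational problem to $H^1(M)$: since $\scal_g$ is bounded, $Q_g$ is coercive on $H^1(M)$ modulo the $L^2$-constraint (adding a large multiple of $\|u\|_{L^2}^2$ makes it equivalent to the $H^1$-norm squared), and the $L^2$-unit sphere is weakly closed while $Q_g$ is weakly lower semicontinuous. A standard direct method argument then gives a minimizer $u_g \in H^1(M)$. Replacing $u_g$ by $|u_g|$ leaves both $Q_g$ and the constraint unchanged, so one can assume $u_g \geq 0$. The Euler--Lagrange equation, obtained by differentiating $Q_g(u_g + t\eta) - \mu(\|u_g + t\eta\|^2 - 1)$ at $t=0$ in an arbitrary direction $\eta \in C^\infty(M)$ (with no boundary restriction, since we impose none on $u$), yields
$$\int_M \p{8\g{\nabla u_g}{\nabla \eta} + 2\scal_g u_g \eta - 2\lambda_g u_g \eta}\, dV_g = 0.$$
Integration by parts produces both the bulk equation $-4\Delta_g u_g + \scal_g u_g = \lambda_g u_g$ in $M$ and the natural boundary condition $\nabla_\nu u_g = 0$ on $\partial M$.

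Elliptic regularity for this Neumann problem (smooth coefficients, smooth boundary) then upgrades $u_g$ to $C^\infty(\overline M)$. Strict positivity $u_g > 0$ on $\overline M$ follows from the strong maximum principle in the interior and the Hopf lemma at the boundary (applied to the operator $-4\Delta_g + \scal_g - \lambda_g$, after shifting by a constant to make the zeroth-order term non-negative), so $f_g := -2\ln u_g$ is a well-defined smooth function satisfying the normalization (\ref{e-f}). Substituting back and using $\Delta_g f_g = -2\Delta_g u_g / u_g + 2|\nabla u_g|_g^2/u_g^2$ together with $|\nabla f_g|_g^2 = 4|\nabla u_g|_g^2/u_g^2$ gives
$$-2\Delta_g f_g - |\nabla f_g|_g^2 + \scal_g = \tfrac{4\Delta_g u_g}{u_g} + \scal_g = \lambda_g,$$
and $\nabla_\nu f_g = -2\nabla_\nu u_g/u_g = 0$ on $\partial M$, which is (\ref{fproperties}). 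Uniqueness of $f_g$ reduces to simplicity of the lowest eigenvalue of $L := -4\Delta_g + \scal_g$ with Neumann conditions: any two non-negative minimizers are positive eigenfunctions for the same eigenvalue, and the ratio of two such eigenfunctions is constant by the strong maximum principle (or by the usual argument that two $L^2$-orthogonal first eigenfunctions would contradict positivity). After normalizing $\|u_g\|_{L^2} = 1$, $u_g$ is unique, hence so is $f_g$.

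The main obstacle is the boundary behaviour: securing the Neumann condition as the \emph{natural} boundary condition of the free variational problem, and verifying strict positivity of $u_g$ up to $\partial M$ so that $f_g = -2\ln u_g$ is smooth and the boundary condition $\nabla_\nu f_g = 0$ genuinely holds pointwise. Both points are handled by combining standard elliptic regularity for Neumann boundary value problems with the Hopf boundary-point lemma; this is the only non-routine input beyond the direct method.
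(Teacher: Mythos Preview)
Your approach is essentially identical to the paper's: both make the substitution $u=e^{-f/2}$ to reduce the nonlinear variational problem to the principal Neumann eigenvalue problem for the Schr\"odinger operator $L=4\Delta_g+\scal_g$, and then invoke standard spectral theory for existence, positivity, regularity, and simplicity of the ground state (the paper simply cites \cite[Theorem 8.38]{GT}, whereas you spell out the direct method, the Hopf boundary-point lemma, and the simplicity argument). One minor slip to fix: in the paper's convention $\Delta_g=\delta_g\nabla_g$ is the \emph{positive} Laplacian, and with either consistent convention your displayed back-substitution is off---using your own (analyst's) sign for $\Delta_g$, the line $-2\Delta_g f_g-|\nabla f_g|^2+\scal_g=4\Delta_g u_g/u_g+\scal_g$ drops a term $-8|\nabla u_g|^2/u_g^2$, and $4\Delta_g u_g/u_g+\scal_g$ equals $2\scal_g-\lambda_g$ rather than $\lambda_g$; once the signs are made consistent with the paper's convention (so that $\Delta_g f_g=-2\Delta_g u_g/u_g-2|\nabla u_g|^2/u_g^2$ and the Euler--Lagrange equation reads $4\Delta_g u_g+\scal_g u_g=\lambda_g u_g$), the computation goes through as intended.
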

\begin{proof} Consider the Schr\"odinger operator $L:=4\Delta_g+\scal_g$. We claim that $\lambda_g$ is the principal Neumann eigenvalue of $L$, denoted $\lambda_1(L)$. To see this, take the variational formula for $\lambda_1(L)$:
\begin{equation*}
\lambda_1(L)=\inf_{\scriptsize{\begin{matrix}w\in C^\infty(M)\\ \norm{w}_{L^2}=1\end{matrix}}}	\int_M w Lw\; dV_g=\inf_{\scriptsize{\begin{matrix}w\in C^\infty(M)\\ \norm{w}_{L^2}=1\end{matrix}}}\tb{\int_M 4\abs{\nabla w}^2+\scal_g w^2\; dV_g}
\end{equation*}
which is equivalent to (\ref{lambdadef}) by a transformation $f=-2\ln \abs{w}$. By standard theory (See e.g. \cite[Theorem 8.38]{GT}), there exists a unique positive eigenfunction $w_g$, with $\norm{w_g}_{L^2(M)}=1$, such that 
$$Lw_g=\lambda_g w_g.$$
Set $f_g=-2\ln w_g$, then 
$$\lambda_g w_g=Lw_g=\p{-2\Delta f_g-\abs{\nabla f_g}^2+\scal_g}w_g.$$
As $w_g$ is positive and since $f_g$ inherits the Neumann boundary condition of $w_g$, we have proven both existence and (\ref{fproperties}). 
\end{proof}

We have introduced this functional, so that it may help inform us of what behaviour we should expect of the Ricci flow at the boundary. As we did for the Einstein-Hilbert action, we study its first variation:

\begin{proposition}[First variation of $\lambda$]\label{firstvarlambda} Let $h\in \text{Sym}^2 \p{M}$ be the direction of a weighted-volume preserving variation, that satisfies the boundary condition $h^T=\varphi g^T$, for a $\varphi\in C^\infty\p{\partial M}$. Then
\begin{equation}\label{firstvarlambdaeq}\lambda'_g(h)=-\int_{M}\g{\Ric_g+\nabla^2f_g}{h}\; dW_g-\int_{\partial M}2\MC_g'(h)+\varphi\MC_g\;dB_g\end{equation}
where $dW_g:=e^{-f_g}dV_g$ and $dB_g:=e^{-f_g}dA_g$.
\end{proposition}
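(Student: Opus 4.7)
The plan is to invoke Perelman's envelope trick: because $f_g$ minimises the integrand over $f\in C^\infty_g(M)$ subject to $\int_M e^{-f}dV_g=1$, the derivative $\lambda'_g(h)$ is insensitive to first-order variation of $f_g$ along that constraint. I therefore extend the variation to a smooth family $f_t$ with $f_0=f_g$ keeping the weighted volume form pointwise constant, $dW_{g_t}=dW_g$. Differentiating $-f_t+\tfrac{1}{2}\ln\det g_t\equiv\mathrm{const}$ at $t=0$ gives $\partial_t f_t|_{t=0}=\tfrac{1}{2}\phi$ with $\phi:=\trace_g h$, and since $\partial_t dW_{g_t}=0$ the derivative collapses to
$$\lambda'_g(h)=\int_M\p{\scal'(h)+(|\nabla f_g|^2)'(h)}\,dW_g.$$

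Now I insert the standard formula $\scal'(h)=\Delta\phi+\delta\delta h-\g{\Ric}{h}$ together with the direct computation $(|\nabla f_g|^2)'(h)=-h(\nabla f_g,\nabla f_g)+\g{\nabla f_g}{\nabla\phi}$ and integrate the two derivative-on-$h$ pieces against $e^{-f_g}$. A single integration by parts turns $\int_M\Delta\phi\,dW_g$ into $-\int_M\g{\nabla f_g}{\nabla\phi}\,dW_g-\int_{\partial M}\nabla_\nu\phi\,dB_g$, absorbing the $\g{\nabla f_g}{\nabla\phi}$ term coming from the variation of $|\nabla f_g|^2$. A double integration by parts of $\int_M\delta\delta h\,dW_g$ produces the bulk $-\g{h}{\nabla^2 f_g}+h(\nabla f_g,\nabla f_g)$, the second summand cancelling the quadratic term above, and boundary contributions $\sq{h(\nabla f_g,\nu)-\g{\delta h}{\nu}}\,dB_g$. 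Collecting gives
$$\lambda'_g(h)=-\int_M\g{\Ric+\nabla^2 f_g}{h}\,dW_g+\int_{\partial M}\sq{-\nabla_\nu\phi-\g{\delta h}{\nu}+h(\nabla f_g,\nu)}\,dB_g,$$
so the bulk already matches (\ref{firstvarlambdaeq}) and only the boundary bracket has to be identified.

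For that final identification, recognise that $-\nabla_\nu\phi-\g{\delta h}{\nu}=-\Done(h)$, and use Proposition~\ref{D1eq} to rewrite the bracket as $-\sq{2\MC'_g(h)+\varphi\MC_g}-\delta_{g^T}(h\cdot\nu)+h(\nabla f_g,\nu)$. The remaining pieces $-\delta_{g^T}(h\cdot\nu)+h(\nabla f_g,\nu)$, integrated against $dB_g=e^{-f_g}dA_g$, should cancel. This is where the Neumann condition $\nabla_\nu f_g=0$ from Lemma~\ref{minimizer} is decisive: it forces $\nabla f_g$ to be tangential along $\partial M$, so integration by parts on the closed manifold $\partial M$ yields
$$\int_{\partial M}\delta_{g^T}(h\cdot\nu)\,dB_g=-\int_{\partial M}\g{\nabla f_g}{h\cdot\nu}\,dB_g=-\int_{\partial M}h(\nabla f_g,\nu)\,dB_g,$$
and the two stray terms cancel, producing (\ref{firstvarlambdaeq}). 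The main obstacle I anticipate is precisely this last cancellation — the Neumann condition on $f_g$ must be invoked in exactly the right spot to dispose of the tangential divergence and mixed tangent–normal term that do not appear in the closed-manifold version of the identity; everything else is bookkeeping of signs dictated by the paper's convention $\delta=-\trace_{12}\nabla$.
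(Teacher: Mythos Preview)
Your argument is correct in substance and reaches the same boundary identification as the paper, but the route differs in how the $f$-variation is handled. The paper takes $f_t=f_{g_t}$ to be the actual minimiser for each $g_t$, carries the unknown $f_g'$ through the computation, and then disposes of all $f_g'$-dependent terms by collecting them into $\int_M\big(-2\Delta f_g-|\nabla f_g|^2+\scal_g\big)\big(\tfrac12\phi-f_g'\big)\,dW_g$, invoking the Euler--Lagrange equation of Lemma~\ref{minimizer} to replace the bracket by $\lambda_g$ and then the weighted-volume constraint to kill the integral. Your choice of Perelman's gauge $dW_{g_t}\equiv dW_g$ (so $f_0'=\tfrac12\phi$ exactly) front-loads that cancellation via the envelope principle and never needs the Euler--Lagrange identity explicitly; this is cleaner bookkeeping, at the cost of a one-line justification that any constraint-preserving extension of $f$ yields the same first derivative. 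Both routes converge on the same boundary integrand $-\Done(h)+h(\nabla f_g,\nu)$ and the same final cancellation via $\nabla_\nu f_g=0$. One caveat: when you substitute Proposition~\ref{D1eq} you should get $-\Done(h)=-2\MC_g'(h)-\varphi\MC_g+\delta_{g^T}(h\cdot\nu)$, so the leftover pair is $+\delta_{g^T}(h\cdot\nu)+h(\nabla f_g,\nu)$, not $-\delta_{g^T}(h\cdot\nu)+h(\nabla f_g,\nu)$; with the correct sign your integration-by-parts identity $\int_{\partial M}\delta_{g^T}(h\cdot\nu)\,dB_g=-\int_{\partial M}h(\nabla f_g,\nu)\,dB_g$ gives the desired cancellation, whereas with the sign as written it would not.
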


\begin{proof}
Set $g_t:=g+th$ and $f_t:=f_{g_t}$. Volume preservation implies
$$0=\ddt \int_{M}1\; dW_{g_t}=\int_{M}\p{\frac{1}{2}\trace_g h-f_g'}\; dW_g.$$	
By the Leibniz rule
\begin{align*}\ddt \lambda_{g_t}=&\int_{M}\left[\ddt \p{\scal_{g_t}+g_t\p{\nabla f_t,\nabla f_t}}\right]\; dW_g +\int_{M}\p{\scal_g+|\nabla f_g|_g^2}\p{\frac{1}{2}\trace_g h-f_g'}\; dW_g\end{align*}
Recalling the first variation of the scalar curvature, we have
\begin{align*}\int_M \left[\ddt\scal_{g_t}\right]\;dW_g=&\int_{M}-\g{\Ric_g}h+\Delta\trace_g h+\delta_g(\delta_g h)\; dW_g.\end{align*}
Integrating by parts with respect to $dV_g$,
\begin{align*}\int_{M}\delta_g(\delta_g h)\; dW_g=&-\int_M \g{\delta_g h}{\nabla f_g}\; dW_g-\int_{\partial M}\g{\delta_g h}\nu\; dB_g \\ =& \int_{M}h\p{\nabla f_g,\nabla f_g}-\g{\nabla^2 f_g}h\; dW_g-\int_{\partial M}\g{\delta_g h}\nu-h\p{\nabla f_g,\nu}\; dB_g.\end{align*}
As for the variation of the second part of the integrand of $\lambda$, we calculate
\begin{align*}\ddt |\nabla f_t|_{g_t}^2=-h\p{\nabla f_g,\nabla f_g}+2\g{\nabla f_g'}{\nabla f_g}. \end{align*}
Again, integrating by parts
\begin{align*}\int_{M} \Delta_g \trace_g h+2\g{\nabla f'_g}{\nabla f_g} \; dW_g=&\int_{M}-2 \p{\Delta_g f_g+|\nabla f_g|^2_g}\p{\frac{1}{2}\trace_g h-f_g'}\; dW_g\\ &-\int_{\partial M}\nabla_\nu\trace_g h+2\p{\frac{1}{2}\trace_g h-f_g'}\nabla_\nu f_g\; dB_g.\end{align*}
All in all: 
\begin{align*}\ddt \lambda_{g_t}=& -\int_M \g{\Ric_g+\nabla^2 f_g}h\; dW_g+ \int_{M}\p{-2\Delta_g f_g-|\nabla f_g|_g^2+\scal_g}\p{\frac{1}{2}\trace_g h-f_g'}\; dW_g\\ &-\int_{\partial M} \g{\delta_g h}\nu+\nabla_\nu\trace_g h-h\p{\nabla f_g,\nu}\; dB_g-2\int_{\partial M}	\p{\frac{1}{2}\trace_g h-f_g'}\nabla_\nu f_g\; dB_g.
\end{align*}
From Lemma \ref{minimizer}, we may conclude that the second and fourth terms vanish. By Proposition \ref{D1eq} and Stokes theorem
\begin{align*}\int_{\partial M}\Done(h)-h\p{\nabla f_g,\nu}\; dB_g &=\int_{\partial M}2\MC_g'(h)+\varphi\MC_g-\delta_{g^T}\p{h\cdot \nu}-h\p{\nabla f_g,\nu}\; dB_g\\ &=\int_{\partial M}2\MC_g'(h)+\varphi\MC_g\; dB_g\end{align*}
since 
$$\p{\delta_{g^T}\p{h\cdot \nu}+h\p{\nabla f_g,\nu}}\; dB_g=\delta_{g^T}\p{e^{-f_g}h\cdot \nu}\; dV_g$$
\end{proof}

Thus, exactly as in the Einstein-Hilbert case, mean curvature preservation would force minimality of the boundary - at least if we want our dear Einstein metrics to be critical points:

\begin{corollary}\label{lambda'=0} If $g$ is Einstein and $\MC_g\equiv 0$, then $\lambda_g'(h)=0$ for all volume preserving (equiv. weighted-volume preserving) $h\in \text{Sym}^2\p{M}$, satisfying $h^T=\varphi g^T$ and $\MC_g'\p{h}=0$.
\end{corollary}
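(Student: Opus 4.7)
The approach is to apply Proposition~\ref{firstvarlambda} and show that both integrals in \eqref{firstvarlambdaeq} vanish. The key preliminary observation is that if $g$ is Einstein, then the minimizer $f_g$ supplied by Lemma~\ref{minimizer} must be a constant function; granted this, $\nabla^2 f_g\equiv 0$, the weighted volume form $dW_g$ is a constant multiple of $dV_g$, and accordingly a variation is volume-preserving if and only if it is weighted-volume-preserving, justifying the parenthetical remark in the statement.

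To establish that $f_g$ is constant, I would revisit the substitution $w_g = e^{-f_g/2}$ used in the proof of Lemma~\ref{minimizer}, under which $w_g$ is the principal positive Neumann eigenfunction of the Schr\"odinger operator $L = 4\Delta_g + \scal_g$. For an Einstein metric, $\scal_g = n\mu$ is constant, so $L$ differs from $4\Delta_g$ only by a scalar multiple of the identity; the nonzero constant function then solves $Lw = n\mu\, w$ with Neumann data, and since the principal eigenfunction is unique up to scale, $w_g$ is itself constant. Hence $f_g$ is constant.

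Returning to \eqref{firstvarlambdaeq}, the bulk integrand becomes $\g{\mu g}{h} = \mu\trace_g h$, and with $dW_g$ proportional to $dV_g$ the bulk integral reduces to a constant multiple of $\int_M \trace_g h\, dV_g = 2\left.\tfrac{d}{dt}\right|_{t=0}\int_M dV_{g_t}$, which vanishes by the volume-preserving hypothesis. The boundary integrand $2\MC_g'(h) + \varphi\MC_g$ vanishes pointwise, directly from the assumptions $\MC_g'(h) = 0$ and $\MC_g \equiv 0$. Thus $\lambda_g'(h) = 0$, and I do not anticipate any substantive obstacle beyond the short spectral argument for the constancy of $f_g$.
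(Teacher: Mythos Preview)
Your proposal is correct and follows essentially the same route as the paper: establish that the minimizer $f_g$ is constant when $g$ is Einstein, then apply Proposition~\ref{firstvarlambda} so that the bulk term reduces to $-\mu e^{-f_g}\int_M \trace_g h\,dV_g=0$ by volume preservation and the boundary term vanishes pointwise from $\MC_g\equiv 0$ and $\MC_g'(h)=0$. The only cosmetic difference is that the paper argues constancy of $f_g$ directly from the functional (since $\scal_g=n\mu$ is constant, $\int_M(\scal_g+|\nabla f|^2)\,dW_g=n\mu+\int_M|\nabla f|^2\,dW_g$ is minimized by constant $f$), whereas you go through the spectral characterization of $w_g$; both arguments are equivalent and equally short.
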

\begin{proof}
Let $\mu$ be the Einstein constant of $g$, then
$$\int_{M}\scal_g+|\nabla f|_g^2\; dW_g=n\mu+\int_M|\nabla f|_g^2\; dW_g $$
is minimized by a constant $f_g$. Thus
$$\lambda_g'(h)=-\int_M \g{\Ric_g}h \; dW_g=-\mu e^{-f_g}\int_M \trace_g h\; dV_g=0$$
for volume-preserving $h$.
\end{proof}

Existence of a Ricci flow of metrics with vanishing second fundamental form at the boundary has recently been studied by \cite{Chow}. These are flows where each side of 
 \begin{equation}\label{DgRic}\MC_g'(\Ric)=-\frac{1}{2(n-1)}\trace_{g^T}\p{\Ric}\MC_g\qquad \qquad \text{on} \quad \partial M\end{equation}
 vanish separately. These are of course interesting in their own right, but are very restrictive on the choice initial metric.\\

There exist several related functionals in the field surrounding Ricci flow and Einstein metrics. Whenever the functional is build around a scalar curvature term, it is not unnatural to expect to find $\Done\p{h}$ in the integrand of the boundary term of the first variation - as is the case for $S_g$ and $\lambda_g$. In fact, it might be pertinent to emphasize that:

\begin{remark} After fixing the conformal class of the boundary, the following functionals all have the same first order boundary term (up to a constant)
$$S_g=\int_M\scal_g \; dV_g\qquad \qquad\text{Einstein-Hilbert action}$$
$$\lambda_g=\inf_{f\in C_g^\infty(M)}\int_{M}\scal_g+|\nabla f|_g^2\; dW_g\qquad \qquad \text{Perelman's $\lambda$-functional}$$
$$\mu_+(g)=\inf_{f\in C_g^\infty(M)}\frac{1}{2}\int_{M}\scal_g +|\nabla f|_g^2 -2f\; dW_g\qquad \qquad \text{The Expander Entropy}$$
$$\mu_-(g)=\inf_{f\in C_{\tau,g}^\infty(M)}\frac{1}{\p{4\pi\tau}^{n/2}}\int_{M}\tau\p{\scal_g +|\nabla f|_g^2} +f-n\; dW_g\qquad \qquad \text{The Shrinker Entropy}\footnote{Here the infimum is taken over smooth functions with $\int_M e^{-f}\; dV=\p{4\pi \tau}^{n/2}$.}$$
Both entropies are non-decreasing under the Ricci flow, and the boundary term  in the second remains unchanged if we allow for variable $\tau$. For the application of the entropies in the study of stability of Einstein metrics under Ricci flow, we refer you to \cite{KK1}.
\end{remark}

Having now established a set of agreeable boundary conditions, we dedicate the rest of the paper to proving that these conditions leads to adequate short-term existence and uniqueness of Ricci flow on manifolds with boundary.
\pagebreak

\section{Function Spaces and Boundary Value Problems}

We will apply theory of boundary value problems for linear parabolic systems, developed - in part - by V. A. Solonnikov, which can be found his book \cite{Sol1}. We therefore follow his conventions with regards to norms and function spaces. In Definition \ref{fctspacesonM} we will describe how these spaces can be extended to metrics and other tensors.

\begin{definition}[Function spaces] For a function $f:\R^n\times [0,T]\to \R$, we define the following semi-norms:
$$\left\langle f\right\rangle_{(l)}:=\sum_{2m+|\alpha|=\lfloor l\rfloor}\sup_{x,y,t}\frac{\abs{\partial_t^m D_x^\alpha f(x,t)-\partial_t^m D_x^\alpha f(y,t)}}{\abs{x-y}^{l-\lfloor l\rfloor}}+\sup_{x,t,s}\frac{\abs{\partial_t^m D_x^\alpha f(x,t)-\partial_t^m D_x^\alpha f(x,s)}}{\abs{t-s}^{\frac{l-\lfloor l\rfloor}2}}$$
and, for $0<\varepsilon<1$,
$$\left[f\right]_{p,\varepsilon}:=\p{\int_{0}^T\int_{\R^n}\int_{\R^n}\frac{\abs{f(x,t)-f(y,t)}^p}{\abs{x-y}^{n+\varepsilon p}}\; dx\; dy\; dt}^{\frac{1}{p}}+\p{\int_{\R^n}\int_0^T\int_0^T\frac{\abs{f(x,t)-f(x,s)}^p}{\abs{t-s}^{1+\frac{\varepsilon p}{2}}}\; dt\; ds\;dx}^{\frac{1}{p}}$$

\begin{enumerate}[i)]
\item For $l>0$, $C^{(l)}\p{\R^n\times [0,T]}$ will denote the closure of the set of compactly supported smooth functions, in the norm
$$\norm{f}_{C^{(l)}}=\left\langle f\right\rangle_{(l)}+\sum_{2m+|\alpha|=0}^{\lfloor l\rfloor-1}\sup_{x,t}\left|\partial_t^mD_x^\alpha f(x,t)\right|$$
\item For integer $l$, we will define the Sobolev space $W^{(2l),p}\p{\R^n\times [0,T]}$ as the closure of the set of $C_c^\infty$-functions, in the norm
	$$\norm{f}_{W^{(2l),p}}=\p{\sum_{2m+|\alpha|\leq 2l}\norm{\partial_t^mD_x^\alpha f}^p_{L^p}}^{\frac{1}p}$$
\item For non-integer $l$, we introduce $B^{(l),p}\p{\R^{n-1}\times [0,T]}$ as the closure of $C_c^\infty$ in the norm
$$\norm{f}_{B^{(l),p}}=\sum_{2m+|\alpha|=\lfloor l\rfloor}\left[\partial_t^mD_x^\alpha f\right]_{p,l-\lfloor l\rfloor}+\sum_{2m+|\alpha|=0}^{\lfloor l\rfloor-1}\norm{\partial^m_t D_x^\alpha f}_{L^p}$$
The choice of the symbol $B$ and space $\R^{n-1}$, reflects the fact that the use of these norms are restricted to the boundary.
\end{enumerate}
\end{definition}

\begin{definition}[Norms on tensors]\label{fctspacesonM} Fix a finite family of coordinate neighbourhoods $\tb{U_m}_{m=1}^N$, covering $M$. Add to this another cover of slightly smaller sets $\tb{V_m}_{m=1}^N$, such that $\overline{V_m}\subset U_m $ for all $m$. Let $\tb{\eta_m}_{m=1}^N$ be a partition of unity, subordinate to the smaller cover $\tb{V_m}_{m=1}^N$. To each pair $(V_m,U_m)$ we associate a smooth cut-off function, $\chi_m$, with support in $U_m$, such that $V_m\subset \tb{p\in U_m\; : \; \chi_m(p)=1}$. We construct the coordinate functions $\psi_m:U_m\to \R^n$ such that
\begin{itemize} 
\item If $U_m\subset M^\circ$, $\psi_m$ is a diffeomorphism onto $\R^n$	
\item If $U_m\cap \partial M\neq \emptyset$, $\psi_m$ is a diffeomorphism onto $\R^n_+ :=\tb{x\in \R^n\; :\; x_n\geq 0}$
\end{itemize}
Given a tensor $\omega\in T^{(a,b)}M$, we denote its local representation on $U_m$ as $\hat{\omega}_m:=\p{\psi_m}_\ast \p{\chi_m \omega}$. If $\omega(t)$ is a one-parameter family of tensors, defined for $t\in [0,T]$, we extend the norms $C^{(l)}$ and $W^{(2l),p}$ to $M_T=M\times [0,T]$:
$$\norm{\omega}_{C^{(l)}\p{M_T}}:=\sum_{m=1}^N \norm{\hat{\omega}_m}_{C^{(l)}\p{\R^n_{(+)}\times [0,T]}}=\sum_{m=1}^N\sum_k \norm{\hat{\omega}_m^k}_{C^{(l)}\p{\R^n_{(+)}\times [0,T]}}$$
with $\hat{\omega}_m^k$ denoting the $k$'th coordinate function of $\hat{\omega}_m$ and $\R^n_{(+)}$ equal to $\R^n$ or $\R^n_+$, depending on whether $U_m$ is interior or not.  We extend $W^{(2l),p}$ analogously. For tensors over $\partial M$, we use that $\left.\psi_m\right|_{\partial M\cap U_m}$ is a diffeomorphism to $\R^{n-1}$, such that we can extend the $B^{(l),p}$ norm to $\partial M_T:=\partial M\times [0,T]$:
$$\norm{\omega}_{B^{(l),p}\p{\partial M_T}}:=\sum_{m=1}^{N'} \norm{\hat{\omega}_m}_{B^{(l),p}\p{\R^{n-1}\times [0,T]}}$$
where the summation is only over boundary charts.
\end{definition}

In certain cases, we will stress the type of tensor, writing $W^{(2l),p}\p{M_T,\Sym}$ to denote the space of evolving symmetric $2$-tensors over $M$, with bounded $W^{(2l),p}$-norm. On occasion, we will suppress the base manifold in the notation, when it is clear what is meant. It is worth noting, that $B^{(l),p}$ is solely defined on $\PS_T$, while $W^{(2l),p}$ is only relevant on $M_T$. \\

\begin{remark} The spaces $C^{(l)}\p{M_T}$ and $W^{(2l),p}\p{M_T}$, by dint of their definition on $\R^n\times [0,T]$, conform to the usual relationship defined by the Sobolev embedding theorem and H\"older inequalities. Similarly, the spaces $B^{(l),p}\p{\partial M_T}$ and $C^{(l)}\p{\PS_T}$ are related by a H\"older-type inequality:
\begin{equation}\label{Hoelder}\norm{fg}_{B^{(l),p}}\leq C\norm{f}_{C^{\p{\lfloor l \rfloor}}}\norm{g}_{B^{(l),p}}\end{equation}
The relationship between the spaces $B^{(l),p}\p{\PS_T}$ and $W^{(2l),p}\p{M_T}$ is governed by the following theorem. 	
\end{remark}

\begin{theorem}[Solonnikov embedding]\cite[Theorem 5.1]{Sol1}  \label{SolEmb} Let $\omega\in W^{(2k),p}\p{M_T}$ and $2m+l< 2k-\frac{1}{p}$. If $\nabla$ is the Levi-Civita connection for an arbitrary smooth stationary metric $g$, then
$$\norm{\partial_t^m \nabla^l\omega}_{B^{\p{2k-2m-l-\frac{1}{p}},p}\p{\partial M_T}}\leq C\p{g}\norm{\omega}_{W^{(2k),p}\p{M_T}} $$
where the constant depends on $g^{-1}$ and the first $l$ derivatives of $g$.\end{theorem}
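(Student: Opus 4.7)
The plan is to reduce the estimate to the standard anisotropic trace theorem on the Euclidean half-space $\R^n_+\times [0,T]$ via the partition of unity and coordinate charts of Definition \ref{fctspacesonM}. Only the boundary charts $U_m$ (those with $\psi_m\p{U_m} = \R^n_+$) contribute to the left-hand side; interior charts drop out of the $B^{(l),p}\p{\partial M_T}$-norm by construction. The task for each boundary chart is then to estimate $\norm{\partial_t^m \widehat{\nabla^l\omega}_m}_{B^{\p{2k-2m-l-1/p},p}\p{\R^{n-1}\times [0,T]}}$ in terms of $\norm{\hat\omega_m}_{W^{(2k),p}\p{\R^n_+\times[0,T]}}$ and to sum over $m$.

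The deep input, which I would take from \cite{Sol1} rather than reprove, is the flat half-space anisotropic trace theorem: for $u\in W^{(2k),p}\p{\R^n_+\times [0,T]}$ and $2m + |\alpha| < 2k - 1/p$, the boundary trace of $\partial_t^m D_x^\alpha u$ lies in $B^{\p{2k - 2m - |\alpha| - 1/p}, p}\p{\R^{n-1}\times [0,T]}$ with a universal constant. The proof of this rests on Fourier-analytic arguments adapted to the parabolic symbol $\abs{\xi}^2 + \abs{\tau}$, combined with interpolation between integer-order Sobolev traces. This is the hard part; everything else in the theorem is bookkeeping on top of it.

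Granted that, I would expand the covariant derivative in each boundary chart as
\[
\nabla^l\omega = D_x^l\omega + \sum_{j=0}^{l-1} P_{l-j}\p{g,\partial g,\ldots,\partial^{l-j}g,g^{-1}}\,D_x^j\omega,
\]
where each coefficient $P_{l-j}$ is a polynomial in components of $g$, $g^{-1}$, and spatial derivatives of $g$ up to order $l-j$, generated by the iterated Christoffel symbols. Since $g$ is smooth and stationary, these coefficients have no time dependence and are uniformly $C^{(s)}$-bounded for any $s$ purely in terms of $g^{-1}$ and the first $l$ derivatives of $g$. Applying the flat-space trace theorem to each $\partial_t^m D_x^j\omega$ (where $2m + j \leq 2m + l < 2k - 1/p$), then invoking the H\"older-type multiplier inequality (\ref{Hoelder}) to absorb the smooth factors $P_{l-j}$ into the $B^{\p{2k-2m-l-1/p},p}$-norm, and finally summing over boundary charts yields the claimed estimate with constant $C(g)$ depending only on the stated data.

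The essential obstacle is thus the flat-space anisotropic trace theorem itself; once that technical ingredient is available, the transfer to a compact manifold with boundary is routine, the only geometric content being the accounting of how many derivatives of $g$ are consumed by the Christoffel expansion of $\nabla^l$.
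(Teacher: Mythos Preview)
The paper does not supply a proof of this statement: it is simply quoted from \cite[Theorem 5.1]{Sol1}. Solonnikov's result is formulated for Euclidean domains, and the paper tacitly relies on the chart machinery of Definition~\ref{fctspacesonM} to transport it to tensors on $M_T$ with respect to a covariant derivative $\nabla$. Your proposal is precisely this transport argument spelled out: localise via the partition of unity, invoke Solonnikov's flat half-space trace theorem on each boundary chart, expand $\nabla^l$ in terms of coordinate derivatives and Christoffel polynomials, and absorb the smooth stationary coefficients using the H\"older-type product estimate~(\ref{Hoelder}). This is correct and is exactly what the paper's citation is implicitly asserting; there is nothing further to compare against.
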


At this point, we will concede to using `constants' that are allowed to change from line to line, but we will always strive to specify any and all relevant dependencies of these. Solvability of the parabolic PDE's we are interested in, is dependent on the following two properties.

\begin{definition}[Parabolicity and complementarity]\label{conditions} Let $\Omega$ be a domain in $\R^n$. For a given set of vector-valued functions, $f:\Omega \times [0,T]\to \R^m$, $\Phi:\partial\Omega \times [0,T]\to \R^r$ and $\psi:\Omega \to \R^m$, we consider the initial boundary value problem:
\begin{equation}\label{paracompl}
	\begin{cases}
	\mathscr{L}u=f & \text{in } \Omega\times [0,T]\\
	\mathscr{B}u=\Phi & \text{on }\partial \Omega \times [0,T] \\
	u=\psi	 & \text{for } t=0
	\end{cases}
\end{equation}
Here, $\mathscr{L}$ and $\mathscr{B}$ denotes matrices of linear partial differential operators with entries $l_{ij}\p{\pp{x},\dt}$ respectively $b_{kj}\p{\pp{x},\dt}$, where $1\leq i,j\leq m$ and $1\leq k\leq r$. Each entry can be regarded as a function on $\R^n\times \mathbb{C}$ with coefficients that might depend on $(x,t)\in \Omega \times [0,T]$.  Let $s_i$ and $t_j$ be non-negative integers such that for any $\xi=\p{\xi_1,\ldots,\xi_n}\in \R^n$, the polynomial degree of the entries of $\mathscr{L}$, in the variable $\lambda$, satisfies
	$$\deg_\lambda l_{ij}\p{\xi\lambda,\lambda^2}\leq t_j-s_i$$
chosen\footnote{The choice of $t_j$ and $s_i$ will in general not be unique, and the complementarity condition might be satisfied for one choice, but not another.} such that $r=\frac{1}{2}\sum_{i,j=1}^m\p{t_j-s_i}$. Furthermore, we set
$$\beta_{ij}:=\deg_\lambda b_{ij}\p{\xi\lambda,\lambda^2}\qquad \qquad \sigma_i:=\max_j\p{\beta_{ij}-t_j}$$
\begin{itemize}
\item Parabolicity: Denote by $l^0_{ij}$ the principal part of $l_{ij}\p{\xi\lambda,\lambda^2}$, i.e. the part satisfying 
$$l^{0}_{ij}\p{\xi\lambda,z\lambda^2}=\lambda^{t_j-s_i}l^{0}_{ij}\p{\xi,z}.$$
Let $\mathscr{L}_0$ denote the matrix with entries $l^0_{ij}$ and let $L\p{\xi,z}=\det \mathscr{L}_0\p{\xi,z} $. We say that $\mathscr{L}$ satisfies the parabolicity condition if the $z$-respective roots of the polynomial $L\p{i\xi,z}$ satisfy 
\begin{equation}\label{parabcond}\Real z\leq -\delta |\xi|^2\end{equation}
for some $\delta>0$.	
\item Complementarity: Denote by $b^0_{ij}$ the principal part of $b_{ij}\p{\xi\lambda,z\lambda^2}$. i.e. the part satisfying
$$b^{0}_{ij}\p{\xi\lambda,z\lambda^2}=\lambda^{\sigma_i+t_j}b^0_{ij}\p{\xi,z}.$$
We let $\mathscr{B}_0$ be the matrix with entries $b^0_{ij}$.\\
 Given a point $x\in \partial\Omega$, we let $\zeta(x)$ be a tangent vector and $\nu(x)$ the inward unit normal at $x$. If for some $0<\delta_1<\delta$, the $z$-respective roots of $L\p{i\xi,z}$ satisfy
 $$\Real z\geq -\delta_1|\zeta|^2$$ 
 then, as a polynomial in $\tau$, $L\p{i(\zeta +\tau\nu),z}$ has $2r$ complex roots, half of them, ($\tau_1^+,\ldots,\tau_r^+$), with positive imaginary part. Define the polynomial 
 $$M^+(\zeta, \tau,z)=\prod_{s=1}^r\p{\tau-\tau_s^+(\zeta,z)}$$
 We say that $\mathscr{B}$ satisfies the complementarity condition, if at every point $(x,t)\in \partial\Omega\times [0,T]$ and every tangent vector $\zeta(x)$, the rows of 
 $$\mathscr{M}\p{i(\zeta +\tau\nu),z}:=L\p{i(\zeta +\tau\nu),z}\mathscr{B}_0\p{i(\zeta +\tau\nu),z}\mathscr{L}_0^{-1}\p{i(\zeta +\tau\nu),z}$$
 are linearly independent modulo $M^+$.
\end{itemize}
We shall furthermore say that the system (\ref{paracompl}) satisfies compatibility conditions up to order $k$, if the derivatives $\left.\partial_t^mu\right|_{t=0}$ - determined uniquely by $f$ and $\psi$ - satisfy the boundary conditions
$$\left.\partial_t^m\p{\mathscr{B}_i\cdot u}\right|_{t=0}=\left.\partial_t^m\Phi_i\right|_{t=0} $$
for $m=0,\ldots ,\left\lfloor\frac{k-\sigma_i}{2} \right\rfloor$, where $\mathscr{B}_i$ denotes the $i$'th row of $\mathscr{B}$ and $\cdot$ the standard vector product. 
\end{definition}

\begin{remark} The type of system we will be interested in, has as its defining operator a diagonal matrix,
$$\mathscr{L}\p{\xi,z}=\begin{pmatrix}l_{1}(\xi,z) & &\\ & \ddots &\\ & & l_m\p{\xi,z } \end{pmatrix}$$
consisting of second order operators, $l_k\p{\lambda \xi,\lambda^2z}=\lambda^2l_k\p{\xi, z}$ for all $1\leq k\leq m$. In this particular case, the matrix $\mathscr{M}$ is the product
$$\begin{pmatrix}|&  &|\\ \mathscr{B}_0^1\p{i(\zeta +\tau\nu),z}&\cdots &\mathscr{B}_0^m\p{i(\zeta +\tau\nu),z}\\ |&& |\end{pmatrix}\begin{pmatrix}\prod_{k\neq 1}l_k\p{i(\zeta +\tau\nu),z} & &\\ & \ddots &\\ & &\prod_{k\neq m} l_k\p{i(\zeta +\tau\nu),z} \end{pmatrix}$$
where $\mathscr{B}_0^k$ denotes the $k$'th column of the principal part of the boundary operator matrix. Taken modulo $M^+$, we see that the matrix whose rows has to be linearly independent for complementarity to apply, is simply
$$\begin{pmatrix}|&  &|\\ \mathscr{B}_0^1\p{i(\zeta +\tau_1^+\nu),z}&\cdots &\mathscr{B}_0^m\p{i(\zeta +\tau_m^+\nu),z}\\ |&& |\end{pmatrix}$$
where $\tau_k^+$ denotes the root of $\tau \mapsto l_k\p{i(\zeta +\tau\nu),z}$ with $\Imag \tau_k^+>0$.
\end{remark}

The following is an adaptation of a much more general theorem; \cite[Theorem 5.4]{Sol1}. We have - in particular - specified the degree and regularity of the operators to pertain to our needs. 

\begin{theorem}\label{Sol5.4} Let $k\geq 1$ and $p>3$ be integers. Let $\Omega$ be a domain in $\R^n$ and suppose $\partial \Omega$ is at least $C^{2k+2}$. Set $\Omega_T:=\Omega\times [0,T]$ and $\partial \Omega_T:=\partial\Omega \times [0,T]$ and let $f$, $\Phi^A$, $\Phi^B$, $\Phi^C$ and $g$ be a set of vector valued functions. Consider the initial boundary value problem (IBP)
$$\begin{cases}
\mathscr{L}u=f & \text{in} \quad \Omega_T\\
\begin{rcases}
\mathscr{A}u=\Phi^A\\ \mathscr{B}u=\Phi^B 	\\ \mathscr{C}u=\Phi^C
\end{rcases}& \text{on}\quad \partial \Omega_T\\
\quad u=g& \text{for}\quad t=0
\end{cases}
$$ 
where $\mathscr{L}$, $\mathscr{A}$, $\mathscr{B}$ and $\mathscr{C}$ are represented by matrices with entries $l_{ij}\p{\pp{x},\dt}$, $a_{ij}\p{\pp{x},\dt}$, $b_{ij}\p{\pp{x},\dt}$ and $c_{ij}\p{\pp{x},\dt}$, respectively. We assume that this IBP satisfies the conditions of parabolicity, complementarity and compatibility up to degree $2k-1$. Furthermore, we require the coefficients of each $l_{ij}$ be $C^{(2k)}\p{\Omega_T}$, the coefficients of $a_{ij}$, $b_{ij}$ and $c_{ij}$ be $C^{\p{2k+1}}\p{\partial \Omega_T}$, $C^{\p{2k}}\p{\partial \Omega_T}$ and $C^{\p{2k-1}}\p{\partial \Omega_T}$, repectively. We will also assume that
$$\deg_\lambda \p{l_{ij}\p{\xi \lambda,\lambda^2}}\leq 2\qquad \qquad \deg_\lambda \p{a_{ij}\p{\xi \lambda,\lambda^2}}\leq 1$$
$$\deg_\lambda \p{b_{ij}\p{\xi \lambda,\lambda^2}}\leq 2\qquad \qquad \deg_\lambda \p{c_{ij}\p{\xi \lambda,\lambda^2}}\leq 3$$
for all $i,j$.\\

If $f\in W^{(2k),p}\p{\Omega_T}$, $\Phi^{A}\in B^{\p{2k+1-\frac{1}{p}},p}\p{\partial \Omega_T}$, $\Phi^{B}\in B^{\p{2k-\frac{1}{p}},p}\p{\partial \Omega_T}$, $\Phi^{C}\in B^{\p{2k-1-\frac{1}{p}},p}\p{\partial \Omega_T}$ and $g\in W^{2k+2,p}\p{\Omega}$, then the IBP has a unique solution $u\in W^{(2k+2),p}\p{\Omega_T}$, satisfying the parabolic estimate
$$\norm{u}_{W^{(2k+2),p}}\leq C\p{ \norm{f}_{W^{(2k),p}}+\norm{\Phi^{A}}_{B^{\p{2k+1-\frac{1}{p}},p}}+\norm{\Phi^{B}}_{B^{\p{2k-\frac{1}{p}},p}}+\norm{\Phi^{C}}_{B^{\p{2k-1-\frac{1}{p}},p}}+\norm{g}_{W^{2k+2,p}}}$$
\end{theorem}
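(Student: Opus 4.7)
The plan is to deduce this statement directly from Solonnikov's Theorem 5.4 in \cite{Sol1} by specializing the Douglis--Nirenberg weights and unwinding the bookkeeping of function spaces. Since $\mathscr{L}$ is a matrix of second-order operators with $\deg_\lambda l_{ij}(\xi\lambda,\lambda^2)\leq 2$, the natural choice of weights in the sense of Definition \ref{conditions} is $s_i\equiv 0$ and $t_j\equiv 2$ for every $i,j$. With this choice, $r=\tfrac12\sum_{i,j}(t_j-s_i)=m$, which is consistent with the fact that $\mathscr{A}$, $\mathscr{B}$, $\mathscr{C}$ together furnish exactly one boundary condition per unknown.

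Under these weights, the indices $\sigma_i=\max_j(\deg_\lambda b_{ij}-t_j)$ attached to the three boundary blocks evaluate to $\sigma^A=-1$, $\sigma^B=0$, and $\sigma^C=1$. Solonnikov's theorem, in its general form, requires the boundary datum in block $i$ for a solution $u\in W^{(2k+2),p}$ to lie in $B^{(2k+2-\sigma_i-\frac{1}{p}),p}(\partial\Omega_T)$, the coefficients of the corresponding operator to be of class $C^{(2k+2-\sigma_i)}(\partial\Omega_T)$, and compatibility to hold up to order $\lfloor(2k-1-\sigma_i)/2\rfloor$. Substituting the three values of $\sigma_i$ above reproduces verbatim the three regularity pairs listed in the statement; similarly $f\in W^{(2k),p}$ and $g\in W^{2k+2,p}(\Omega)$ are exactly the interior regularities that Solonnikov requires for $\mathscr{L}u=f$ when $\mathscr{L}$ has order two and the target space is $W^{(2k+2),p}$.

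Having fixed this translation, the conclusion — existence and uniqueness of a solution $u\in W^{(2k+2),p}(\Omega_T)$ together with the stated parabolic estimate — is precisely what Solonnikov proves. The only nontrivial step, and what I expect to be the main bookkeeping obstacle, is verifying that the versions of the parabolicity and complementarity conditions spelled out in Definition \ref{conditions} agree exactly with those assumed in \cite{Sol1}. Different sources permute sign conventions for $\partial_t$ and the role of rows versus columns in the Douglis--Nirenberg setup, and a careful check that our definitions of $\mathscr{M}$ and $M^+$ coincide — up to a harmless relabeling — with the ones used by Solonnikov will be needed before the specialization above can be invoked with confidence. Once this identification is made, the estimate and well-posedness follow by direct citation.
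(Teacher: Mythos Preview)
Your approach matches the paper's exactly: the paper does not prove this theorem but simply presents it as a specialization of \cite[Theorem~5.4]{Sol1}, which is precisely what you carry out in detail. One small bookkeeping slip: the boundary-space formula should read $B^{(2k-\sigma_i-\frac{1}{p}),p}$ rather than $B^{(2k+2-\sigma_i-\frac{1}{p}),p}$ (the total order of a boundary row is $\sigma_i+t_j=\sigma_i+2$, not $\sigma_i$), though your final specializations for $\Phi^A,\Phi^B,\Phi^C$ come out correctly regardless.
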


\pagebreak

\section{Ricci-deTurck Flow}\label{rdtsection}

The first result of this section provides an existence and uniqueness result for an initial boundary value problem on the upper half-plane, using the properties and results introduced in the preceding section. The subsequent theorem extends this result to a compact manifold with boundary, with a technique resembling that of A. Pulemetov \cite{AP}, where we construct an adequate approximate solution of a closely related problem. While the connection between these results and the Ricci-deTurck flow might not be immediate, it will provide the base, on which we may construct a fixed point argument to solve a Ricci-deTurck initial boundary value problem, subject to the boundary conditions introduced in Section \ref{lambdasection}.

\begin{proposition}\label{upperhalfplane} Let $p>n$ and consider the upper half plane $\Omega:=\R^n_+=\tb{\p{x_1,\ldots,x_n}\in \R^n\;:\;x_n\geq 0}$ and let $h$ be a symmetric $n\times n$-matrix with coefficients in $ C^{(2k)}\p{\Omega_T}$. We further assume that the the coefficients of $h^{-1}$ are of the same regularity, that the leading <$(n-1)\times (n-1)$-submatrix $\left.h_{\alpha\beta}\right|_{\partial\Omega_T}$ is non-degenerate, and that there exists a  $\delta >0$, such that for every $x\in \R^n_+$ and $t\in [0,T]$
\begin{equation}\label{posdef}h(x,t)^{kl}\xi_k\xi_l\geq \delta \abs{\xi}^2\end{equation}
for all vectors $\xi \in \R^n$. Let $V_1:= \R^n$, $V_2:=\text{Sym}(n-1)$, the space of symmetric $(n-1)\times (n-1)$ matrices, and $V_3:= \R$ and denote by $WB_k^p\p{\Omega_T}$, the subspace of 
\begin{equation}\label{WB}W^{(2k),p}\p{\Omega_T,\text{Sym}(n)}\oplus \bigoplus_{i=1}^3 B^{\p{2k+2-i-\frac{1}{p}},p}\p{\partial \Omega_T,V_i}\oplus W^{2k+2,p}\p{\Omega,\text{Sym}(n)}\end{equation}
such that every element $\p{f,\phi^1,\phi^2,\phi^3,\omega}\in WB_k^p\p{\Omega_T}$ satisfies the compatibility conditions up to order $2k-1$ for the system\footnote{Note that Einstein summation does not apply to the fixed index $n$}
\begin{equation}\label{reallocalsystem}
\begin{cases}
\mathscr{L}\p{\pp{x},\partial_t}v_{ij}:=	\partial_t v_{ij}-h^{kl}\partial_k\partial_l v_{ij}+\mathit{l}\p{\pp{x} v}_{ij}  \qquad \qquad\qquad\qquad\qquad \:\: = f_{ij} & \text{in}\: \Omega_T\\
\begin{rcases}A\p{\pp{x}}v_m:=-h^{kl}\p{\partial_k v_{lm}-\frac{1}{2}\partial_m v_{kl}}+a\p{\pp{x} h,v}_m & =\phi^1_m\\
B\p{\partial_t}v_{\alpha\beta}:=\partial_t v_{\alpha\beta}-(n-1)^{-1}h^{\gamma \delta}h_{\alpha\beta}\partial_t v_{\gamma\delta} & =\phi^2_{\alpha\beta}\\
C\p{\pp{x},\partial_t}v:=\p{h^{nn}}^{-\frac{1}{2}}h^{in}h^{\alpha\beta}\p{\partial_\alpha\partial_t v_{\beta i}-\frac{1}{2}\partial_i\partial_t v_{\alpha\beta}} +c\p{\pp{x} h,\partial_t v} & = \phi^3
\end{rcases} & \text{on} \: \partial \Omega_T\\
\left.v_{ij}\right|_{t=0}\qquad \qquad\qquad\qquad\qquad\qquad \quad\quad\:\:\: \:\:\:\qquad\qquad \qquad \qquad \qquad \quad\:\:\: =\omega_{ij}
\end{cases}
\end{equation}
in which the lower order terms are given by
\begin{align*}\mathit{l}\p{\pp{x} v} = -h^{kl}\p{\overline{\Gamma}_{kl}^m\partial_m v_{ij}+4\overline{\Gamma}_{li}^m\partial_k v_{mj}}-2h^{kl}v_{mj}\partial_k\overline{\Gamma}_{li}^m+ 2h^{kl}\p{\overline{\Gamma}_{kl}^m\overline{\Gamma}_{mi}^pv_{pj}+\overline{\Gamma}_{ki}^m\overline{\Gamma}_{lm}^pv_{pj}+\overline{\Gamma}_{ki}^m\overline{\Gamma}_{lj}^pv_{mp}}\end{align*}
where $\overline{\Gamma}$ is the Christoffel symbol of a fixed smooth background metric on $\Omega$, and
$$a\p{\pp{x} h,v}_m=h^{kl}h^{ij}\p{\partial_k h_{lj}-\frac{1}{2}\partial_j h_{kl}}v_{im},$$
\begin{align*}c\p{\pp{x} h,\partial_t v} =& -\frac{1}{2\sqrt{h^{nn}}}h^{\alpha\beta}h^{mn}\p{h^{ik}-\frac{h^{in}h^{kn}}{h^{nn}}}\p{\partial_k h_{m\alpha}+\partial_\alpha h_{km}-\partial_m h_{\alpha k}}\partial_t v_{\beta i}\\ & -\frac{1}{\sqrt{h^{nn}}}h^{\alpha\beta}h^{mn}\p{h^{ik}-\frac{3h^{in}h^{kn}}{2 h^{nn}}}\p{\partial_\alpha h_{\beta m}-\frac{1}{2}\partial_m h_{\alpha\beta}}\partial_t v_{mk},
\end{align*}

Then every set of input data from $WB_k^p\p{\Omega_T}$ gives a unique solution $v\in W^{(2k+2),p}\p{\Omega_T,\text{Sym}(n)}$ of (\ref{reallocalsystem}), such that 
$$\norm{v}_{W^{(2k+2),p}\p{\Omega_T}}\leq C\norm{\p{f,\phi^1,\phi^2,\phi^3,\omega}}_{WB_k^p\p{\Omega_T}}$$
\end{proposition}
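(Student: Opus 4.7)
The strategy is to apply Solonnikov's Theorem \ref{Sol5.4} to system \eqref{reallocalsystem}. The interior operator $\mathscr{L}$ is scalar-diagonal of order two, acting on the $m:=n(n+1)/2$ independent components of the symmetric $2$-tensor $v$, and the boundary operators $\mathscr{A}, \mathscr{B}, \mathscr{C}$ have principal symbols of orders $1$, $2$, and $3$ respectively, matching the degree hypotheses of the theorem exactly. The coefficients of $\mathscr{L}, \mathscr{A}, \mathscr{B}, \mathscr{C}$ are built algebraically from $h$, $h^{-1}$, and the Christoffel symbols of the fixed background metric, so they inherit the required regularity from the $C^{(2k)}$-hypothesis on $h$. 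The compatibility conditions up to order $2k-1$ are by construction part of $WB_k^p(\Omega_T)$, and the right-hand sides $(f, \phi^1, \phi^2, \phi^3, \omega)$ lie in exactly the Sobolev/Besov spaces demanded by Theorem \ref{Sol5.4}. Thus, once parabolicity and complementarity are verified, the desired existence, uniqueness, and estimate follow verbatim.

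Parabolicity is immediate: since $\mathscr{L}_0(\xi, z) = (z + h^{kl}\xi_k\xi_l)\mathbf{I}_m$, we have $L(i\xi, z) = (z + h^{kl}\xi_k\xi_l)^m$, whose only $z$-root is $-h^{kl}\xi_k\xi_l \leq -\delta|\xi|^2$ by \eqref{posdef}. To set up complementarity, fix a tangent vector $\zeta = (\zeta_1, \ldots, \zeta_{n-1}, 0)$ and take $\nu = e_n$ as the inward unit normal. The $\tau$-roots of $L(i(\zeta + \tau\nu), z) = 0$ are
$$\tau^\pm(\zeta, z) = \frac{-h^{n\alpha}\zeta_\alpha \pm \sqrt{(h^{n\alpha}\zeta_\alpha)^2 - h^{nn}(h^{\alpha\beta}\zeta_\alpha\zeta_\beta + z)}}{h^{nn}},$$
and positive definiteness of $(h^{kl})$ gives the Cauchy--Schwarz-type inequality $h^{nn}h^{\alpha\beta}\zeta_\alpha\zeta_\beta \geq (h^{n\alpha}\zeta_\alpha)^2$; hence for $\Real z \geq -\delta_1|\zeta|^2$ with $\delta_1 > 0$ sufficiently small the discriminant has negative real part and $\tau^+$ lies in the upper half-plane. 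Since $L$ is the $m$-th power of a quadratic in $\tau$, we obtain $M^+(\zeta, \tau, z) = (\tau - \tau^+)^m$.

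The main technical obstacle is the complementarity verification itself. By the remark following Definition \ref{conditions}, we must show that the rows of $(\mathscr{A}_0, \mathscr{B}_0, \mathscr{C}_0)(i(\zeta + \tau\nu), z)$, regarded as polynomials in $\tau$ with values in the dual of $\text{Sym}(n)$, are linearly independent modulo $(\tau - \tau^+)^m$. The structural subtlety is that $\mathscr{B}_0$ is, up to its $\partial_t$-factor, a projection of $v^T$ onto its conformally-traceless part, so its rows are linearly dependent among themselves; the missing trace direction must be supplied by the single scalar operator $\mathscr{C}_0$, and the prefactor $(h^{nn})^{-1/2}$ on $\mathscr{C}$ is chosen precisely to make this balance clean. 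Concretely, starting from a linear relation $\lambda^m (\mathscr{A}_0)_m + \Lambda^{\alpha\beta} (\mathscr{B}_0)_{\alpha\beta} + \mu\,\mathscr{C}_0 \equiv 0 \pmod{(\tau - \tau^+)^m}$, one extracts the coefficients via the $z$-degree hierarchy: $\mathscr{A}_0$ is the only operator with no $\partial_t$-factor, so the $z$-leading residue at $\tau = \tau^+$ forces $\lambda^m = 0$; the conformally-traceless structure of $\mathscr{B}_0$ then forces $\Lambda^{\alpha\beta} = c\,h_{\alpha\beta}$ for some scalar $c$; finally the trace direction contributed by $\mathscr{C}_0$, together with nondegeneracy of $h_{\alpha\beta}|_{\partial\Omega_T}$ and $h^{nn} > 0$, pins down $c = \mu = 0$. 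This mirrors the complementarity check in \cite{PG}, with the boundary orders raised by one since we prescribe the evolutions of $v^T$ and of the mean curvature rather than their values. With parabolicity and complementarity both established, Theorem \ref{Sol5.4} furnishes the unique solution $v \in W^{(2k+2),p}(\Omega_T, \text{Sym}(n))$ together with the stated parabolic estimate.
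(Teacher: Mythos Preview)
Your overall plan---reduce to Solonnikov's Theorem~\ref{Sol5.4} and verify parabolicity and complementarity---is correct and is exactly what the paper does; your parabolicity argument is also fine. The gap is in the complementarity verification. The condition must hold at each fixed $(x,t,\zeta,z)$ with $\Real z \geq -\delta_1|\zeta|^2$: at such a point the rows of the boundary symbol are simply vectors in $(\text{Sym}(n))^*$, and $z$ is a fixed complex number. There is therefore no ``$z$-degree hierarchy'' to exploit, because the coefficients $\lambda^m, \Lambda^{\alpha\beta}, \mu$ in any putative linear relation are themselves allowed to depend on $(\zeta,z)$; you cannot isolate $\lambda^m$ by comparing $z$-powers. (Even as a heuristic the ordering is inverted: $\mathscr{A}_0$ carries $z^0$, so it is the \emph{lowest}-order contribution, not the leading one.) The subsequent steps, which rely on having already killed $\lambda^m$, do not get off the ground.

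The paper proceeds instead by exploiting that complementarity is a pointwise, basis-independent condition: one fixes the boundary point and chooses coordinates so that $h_{ij}=\delta_{ij}$ there, whence the single relevant root is $\tau_0 = i\sqrt{z+|\zeta|^2}$ and the symbols of $A$, $B$, $C$ take the explicit forms (\ref{Avlast})--(\ref{Cvlast}). One then checks the equivalent dual statement---that the only $v\in\text{Sym}(n)$ annihilated by all three symbols is $v=0$---by direct computation: comparing (\ref{Avlastn}) with (\ref{Cvlast}) gives $v_{nn}=0$; writing $\varphi=(n-1)^{-1}\delta^{\alpha\beta}v_{\alpha\beta}$, contracting (\ref{Avlast}) with $\zeta^\alpha$, using (\ref{Bvlast}), and combining with (\ref{Cvlast}) and $\tau_0^2 = -(z+|\zeta|^2)$ yields $\bigl(|\zeta|^2 - \tfrac{n-1}{2}(z+2|\zeta|^2)\bigr)\varphi = 0$, which forces $\varphi=0$ since $\Real(z+2|\zeta|^2)>|\zeta|^2$; then (\ref{Avlast}) dispatches the remaining components. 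This is an explicit kernel computation in a simplified frame, not a degree-counting argument, and the passage to $h_{ij}=\delta_{ij}$ is what makes it tractable.
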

\begin{proof} According to Theorem \ref{Sol5.4}, it is enough to show that (\ref{reallocalsystem}) satisfies the conditions of parabolicity and complementarity. The former is a direct consequence of assumption (\ref{posdef}), so we prove only the latter. \\

Since complementarity is a pointwise condition and is independent of the chosen basis, we fix a point $(x,t)\in\partial \R^n_+\times [0,T]$ and choose a basis for which $h_{ij}=\delta_{ij}$, the Euclidean metric. We can then choose a tangent vector $\zeta=\p{\zeta_1,\ldots, \zeta_{n-1},0}$ and a normal vector $\tau\nu=\p{0,\ldots,0,\tau}$, for $\tau \in \mathbb{C}$. Given $z\in \mathbb{C}$, satisfying $\Real z+\abs{\zeta}>0$, we consider the $\tau$-respective roots of the polynomial given by the principal part of the evolution operator:
$$\mathscr{L}_0\p{i\p{\zeta+\tau\nu},z}=z+\abs{\zeta}^2+\tau^2$$
Strictly speaking, the roots should be of the polynomial obtained from taking the determinant of the matrix version of $\mathscr{L}_0$, however, since that is simply $\p{z+\abs{\zeta}^2+\tau^2}I$, where $I$ is the $n^2\times n^2$ identity matrix, the roots are the same. Complementarity should be checked against the root with positive imaginary part:
$$\tau_0=i\sqrt{z+\abs{\zeta}^2}.$$
Taking the principal part of the boundary operators, we obtain the system of equations
\begin{equation}\label{Avlast} A\p{i\p{\zeta+\tau_0\nu}}v_\alpha=-i\p{\delta^{\gamma \beta}\zeta_\gamma v_{\beta \alpha}+\tau_0v_{n\alpha}-\frac{1}{2}\zeta_\alpha\p{\delta^{\gamma\beta}v_{\gamma\beta}+v_{nn}}}
\end{equation}
\begin{equation}\label{Avlastn} A\p{i\p{\zeta+\tau_0\nu}}v_n=-i\p{\delta^{\gamma \beta}\zeta_\gamma v_{\beta n}+\frac{1}{2}\p{\tau_0v_{nn}-\delta^{\gamma\beta}v_{\gamma\beta}}}
\end{equation}
\begin{equation}\label{Bvlast} B\p{z}v_{\alpha\beta}=z\p{v_{\alpha\beta}-\p{n-1}^{-1}\delta^{\gamma\epsilon}v_{\gamma\epsilon}\delta_{\alpha\beta}}
\end{equation}
\begin{equation}\label{Cvlast} C\p{i\p{\zeta+\tau_0\nu},z}v=iz\p{\delta^{\alpha\beta}\zeta_\alpha v_{\beta n}-\frac{1}{2}\tau_0\delta^{\alpha\beta}v_{\alpha\beta}}
\end{equation}
Complementarity is proven, if we can show that there is no non-trivial $v_{ij}$, for which (\ref{Avlast})-(\ref{Cvlast}) vanish simultaneously. Assuming, for contradiction, that such a $v$ exists, it is immediate from (\ref{Avlastn}) and (\ref{Cvlast}), that it must have $v_{nn}=0$. As usual, let us write $\varphi=\p{n-1}^{-1}\delta^{\alpha\beta}v_{\alpha\beta}$, then, upon multiplying (\ref{Avlast}) through by $\delta^{\alpha\epsilon}\zeta_\epsilon$ and using (\ref{Bvlast}), it simplifies to
$$\frac{3-n}{2}\abs{\zeta}^2\varphi+\tau_0\delta^{\alpha\epsilon}\zeta_{\epsilon}v_{n\alpha}=0$$
Together with (\ref{Cvlast}) and the definition of $\tau_0$, we can write
$$0=\frac{3-n}{2}\abs{\zeta}^2\varphi+\frac{n-1}{2}\tau_0^2\varphi=\abs{\zeta}^2\varphi+\frac{1-n}{2}\p{z+2\abs{\zeta}^2}\varphi$$
And, since $\Real z+2\abs{\zeta}^2>\abs{\zeta}^2$, the above implies $\varphi=0$, even in the borderline case of $n=3$. It immediately follows from (\ref{Avlast}) that $v_{ij}=0$ for all $i,j$.
\end{proof}

The operators $\mathcal{L}$, $A$, $B$ and $C$ in system (\ref{reallocalsystem}) were by no means chosen at random, but rather as the local expressions of the operators of a system we aim to solve on $M$. Note that the system (\ref{reallocalsystem}) takes in $7$ entries of data; a $C^{(2k)}$ matrix $h$ and a smooth background metric in the definition of the main PDE, as well as $5$ points of input data, $\p{f,\phi^1,\phi^2,\phi^3,\omega}$. The smooth background metric shall always be the local representation of the usual background metric $\overline{g}$, in whatever chart we are working with, so it will be enough to specify $h$ and the input data. As we require both $h$ and $h^{-1}$ to be $C^{(2k)}$, we can not expect that an arbitrary metric on $\R^n_+$ will do. To circumvent this problem, we interpolate the desired metric with the Euclidean, before we use it as data in the system.

\begin{theorem}\label{initialRDT} Let $k\geq 1$ and $p>n$ and fix a smooth background metric $\overline{g}$. Let $g$ be a $C^{2k+2}$ metric on a manifold $M$, and let $F\in W^{(2k),p}\p{M_T,\Sym}$ and $\Phi^i\in B^{\p{2k+2-i-\frac{1}{p}},p}\p{\partial M_T,V_i}$ for $i=1,2,3$, where $V_1=T\partial M$, $V_2=\text{Sym}^2\p{\partial M}$ and $V_3$ is the trivial line bundle over $\partial M$. Consider the initial boundary value problem
\begin{equation}\label{initialIBV}\begin{cases}
\partial_t u-\trace_g\overline{\nabla}^2u \quad \;\;=F & \text{in  }M_T\\
\begin{rcases}
\beta_g(u) \qquad\qquad  &=\Phi^1\\
\alpha_g\p{\partial_t u}&=\Phi^2\\
\MC_g'\p{\partial_t u}&=\Phi^3
\end{rcases} & \text{on  }\partial M_T\\
\left.u\right|_{t=0} \qquad  \qquad \quad\:\:=g
\end{cases}
\end{equation}
where $M_T=M\times [0,T]$ and $\partial M_T=\partial M\times [0,T]$, for some $T>0$, $\trace_g\bnab^2 =g^{kl}\bnab_k\bnab_l$, $\beta_g=\delta_g+\frac{1}{2}\nabla \trace_g$ (the Bianchi operator) and $\alpha_g\p{\partial_t u}:=\p{\partial_t u}^T-\frac{1}{n-1}\trace_{g^T}\p{\partial_t u}g^T$. Assume (\ref{initialIBV}) satisfies the compatibility conditions up to order $ 2k-1$. Specifically, $h_m:=\left.\partial_t^m u\right|_{t=0}$ - which will be uniquely determined by the initial metric and $F$ whenever $m\leq k+1$, should satisfy for $m\leq k$
\begin{align*}\begin{rcases}\beta_g(h_m) &=\left.\partial_t^m\Phi^1\right|_{t=0}\\
\alpha_g\p{h_m} &=\left.\partial_t^{m-1}\Phi^2\right|_{t=0}\\
\MC_g'\p{h_m} &=\left.\partial_t^{m-1}\Phi^3\right|_{t=0}\end{rcases} \text{on } \PS.
\end{align*}
Then (\ref{initialIBV}) is uniquely solvable in $ W^{\p{2k+2},p}\p{M_T,\Sym}$. Furthermore, the solution $u$ satisfies the parabolic estimate 
\begin{equation}\label{parabolicest}
 \norm{u}_{W^{(2k+2),p}}	\leq C\cdot \p{\norm{F}_{W^{(2k),p}} +\norm{\Phi^1}_{B^{\p{2k+1-\frac{1}{p}},p}}+\norm{\Phi^2}_{B^{\p{2k-\frac{1}{p}},p}}+\norm{\Phi^3}_{B^{\p{2k-1-\frac{1}{p}},p}}+\norm{g}_{W^{2k+2,p}}}
\end{equation}
where the constant $C>0$ depends on $g$, but not on $T$.
\end{theorem}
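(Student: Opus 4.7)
The plan is to reduce (\ref{initialIBV}) on $M$ to a finite collection of model problems on $\R^n_+$ via the coordinate cover from Definition \ref{fctspacesonM}, apply Proposition \ref{upperhalfplane} in each boundary chart, and then glue the local solutions together through an iterative construction in the vein of Pulemetov's argument in \cite{AP}.

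First I would verify that after pulling back via a boundary chart $\psi_m$, the operators $\partial_t-\trace_g\bnab^2$, $\beta_g$, $\alpha_g\partial_t$ and $\MC_g'\circ\partial_t$ agree, to highest order, with $\mathscr{L}$, $A$, $B$, $C$ in (\ref{reallocalsystem}). This is essentially a direct coordinate computation using Lemma \ref{MCfirstvar} for the first variation of the mean curvature; the factor $(h^{nn})^{-1/2}$ appearing in the definition of $C$ arises from the normalisation of the outward unit normal in boundary-flattened coordinates. Since $p>n$ and $2k+2>n/p$, the Sobolev embedding gives $g\in C^{(2k+1)}(M)$ with $g^{-1}$ of the same regularity, which is more than enough to feed $g$ into Proposition \ref{upperhalfplane} as coefficients. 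To obtain a coefficient matrix defined on all of $\R^n_+$, I would interpolate the pulled-back metric with the Euclidean metric via a smooth cutoff supported in $\psi_m(U_m)$, preserving both the uniform positive-definiteness (\ref{posdef}) and the non-degeneracy of the tangential submatrix on $\partial\R^n_+$.

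Next I would construct an approximate solution. For each chart, solve the localised problem with data obtained by multiplying the original $(F,\Phi^i,g)$ by the cutoff $\chi_m$, applying Proposition \ref{upperhalfplane} on boundary charts and the analogous interior half-space result otherwise; call the outcome $u^{(m)}$. Set $\tilde{u}:=\sum_m \eta_m\,u^{(m)}$ on $M_T$. This $\tilde{u}$ satisfies (\ref{initialIBV}) up to a remainder consisting of commutator terms $[\mathscr{L},\eta_m]u^{(m)}$ in the interior and analogous boundary commutators; since each of these involves at most one spatial derivative of $u^{(m)}$, they are strictly lower parabolic order than the principal operators and can be absorbed via interpolation inequalities of the form $\|\nabla u\|_{L^p}\leq \varepsilon\|u\|_{W^{(2),p}}+C_\varepsilon\|u\|_{L^p}$. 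The correction $w:=u-\tilde{u}$ then solves (\ref{initialIBV}) with strictly reduced-order data, so iterating (as a Neumann series, equivalently as a Banach fixed point in $W^{(2k+2),p}(M_T)$) produces the genuine solution together with the parabolic estimate (\ref{parabolicest}). Uniqueness follows immediately from (\ref{parabolicest}) applied to the difference of two solutions, for which all data vanish.

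The main technical obstacle, in my view, is managing the mixed orders of the boundary data during localisation: $\Phi^1$ is first-order in space with no time derivative, whereas $\Phi^2$ and $\Phi^3$ each carry one time derivative, so the associated compatibility conditions and commutator errors live in different $B^{(l),p}$ spaces. Verifying that the partition-of-unity errors genuinely land in strictly lower-order norms than the target (particularly for the mixed space-time operator $C$, which is third order in the parabolic count), without breaking the degree bookkeeping of Theorem \ref{Sol5.4}, requires careful application of the trace embeddings in Theorem \ref{SolEmb}. The fact that the resulting constant depends on $g$ but not on $T$ ultimately reflects the $T$-uniform, pointwise nature of the parabolicity and complementarity conditions established in Proposition \ref{upperhalfplane}.
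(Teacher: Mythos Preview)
Your overall architecture matches the paper's: localise via the cover of Definition~\ref{fctspacesonM}, solve the model problem from Proposition~\ref{upperhalfplane} in each chart, patch with the partition of unity, and then iterate to kill the commutator errors. The paper even packages this as a pair of bounded operators $\mathscr{A}:W^{(2k+2),p}\to WB_k^p$ (apply the operators) and $\mathscr{B}:WB_k^p\to W^{(2k+2),p}$ (solve locally and sum), and shows that both $\mathscr{A}\mathscr{B}-\Id$ and $\mathscr{B}\mathscr{A}-\Id$ have small operator norm, so that $\mathscr{A}$ is invertible.

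The gap in your proposal is the mechanism for smallness. You invoke interpolation inequalities of the form $\norm{\nabla u}_{L^p}\leq \varepsilon\norm{u}_{W^{(2),p}}+C_\varepsilon\norm{u}_{L^p}$ to absorb the commutators $[\mathscr{L},\eta_m]u^{(m)}$, but this does not by itself make the Neumann series converge: the $C_\varepsilon$ term has no reason to be small, and there is no zero-order estimate available to control it independently. The paper's source of smallness is different and quite specific: the cutoffs $\eta_l$ are \emph{time-independent}, so
\[
\norm{\hat{\eta}_l}_{W^{(2k+2),p}(\Omega\times[0,T])}=T^{1/p}\norm{\hat{\eta}_l}_{W^{2k+2,p}(\Omega)},
\]
and every commutator term (interior and boundary alike, after Theorem~\ref{SolEmb}) picks up exactly this factor $T^{1/p}$. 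Choosing $T$ below a threshold depending only on $g$ and the cover then forces $\norm{\mathscr{A}\mathscr{B}-\Id}_{Op},\norm{\mathscr{B}\mathscr{A}-\Id}_{Op}<1$. This is also why the final constant is $T$-independent: it is the operator norm of $\mathscr{A}^{-1}$, which is uniformly bounded once the Neumann ratio is, say, below $1/2$ --- not, as you suggest, a consequence of the pointwise nature of parabolicity and complementarity.
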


\begin{proof} Let $U_l$, $V_l$, $\psi_l$, $\eta_l$ and $\chi_l$ be as defined in Definition \ref{fctspacesonM}, and recall the notation $\hat{\omega}_l=\p{\psi_l}_\ast \p{\chi_l \omega}$, for the local representation of the tensor $\omega$ on the chart $U_l$. We assume the functions $\eta_l$ satisfy the bound $\max_l\norm{\eta_l}_{C^{2k+2}\p{M}}\leq C_\eta$, define the space $WB_k^p\p{M_T}$ analogously to (\ref{WB}) and consider the map
\begin{align*}
&\mathscr{A}:W^{\p{2k+2},p}\p{M_T,\Sym} \longrightarrow WB^p_k\p{M_T}	\\
&\mathscr{A}u= \p{\partial_t u-\trace_g \bnab^2 u,\beta_g(u),\alpha_g\p{\partial_t u},\MC'_g(\partial_t u),\left. u\right|_{t=0}}.
\end{align*}
We will now construct a map in the other direction. Given an element $\p{F,\Phi^1,\Phi^2,\Phi^3,\gamma}\in WB_k^p\p{M_T}$, we proceed as follows:\\

For each $U_l$, we solve the parabolic initial value (boundary) problem (\ref{reallocalsystem}) with input data 
$$\p{h,f,\phi^1,\phi^2,\phi^3,\omega}=\p{\hat{g}_l+\p{1-\hat{1}_l}\delta,\hat{F}_l,\hat{\Phi}^1_l,\hat{\Phi}^2_l,\hat{\Phi}^3_l,\hat{\gamma}_l}$$
on $\Omega_T=\R^n\times [0,T]$ if $U_l$ is an interior chart, and $\Omega_T=\R^n_+\times [0,T]$ if $U_l$ is a boundary chart. In the above, $\hat{1}_l$ denotes the local representation of the characteristic function of $M$, and $\delta$ the Euclidean metric on $\R^n$. We thus obtain a family of solutions $\tb{v_l}_{l=1}^N$, each of which satisfies a parabolic estimate 
\begin{equation}\label{vlestimates}\norm{v_l}_{W^{\p{2k+2},p}\p{\Omega_T}}\leq C\norm{\p{\hat{F}_l,\hat{\Phi}^1_l,\hat{\Phi}^2_l,\hat{\Phi}^3_l,\hat{\gamma}_l}}_{WB_k^p\p{\Omega_T}}\end{equation}
from this family, we define the map 
\begin{align*}
&\mathscr{B}: WB^p_k\p{M_T}\longrightarrow W^{\p{2k+2},p}\p{M_T,\Sym} 	\\
&\mathscr{B}\p{F,\Phi^1,\Phi^2,\Phi^3,\gamma}= \sum_{l=1}^N \eta_l \psi_l^\ast v_l.
\end{align*}
It is clear from (\ref{vlestimates}), that $\mathscr{B}$ defines a bounded operator. Moreover 
\begin{align*} \norm{\p{\partial_t-\trace_g \bnab^2}\sum_{l=1}^N \eta_l \psi_l^\ast v_l-F}_{W^{\p{2k},p}\p{M_T}} & = \norm{g^{-1}\star \p{\bnab^2 \eta_l\star \psi_l^\ast v_l+\bnab \eta_l\star \bnab \psi_l^\ast v_l}}_{W^{\p{2k},p}\p{M_T}}\\ 
& \leq C\p{g}\sum_{l=1}^N\norm{\hat{\eta}_l}_{W^{(2k+2),p}\p{\Omega_T}}\norm{v_l}_{C^{\p{2k+1}}\p{\Omega_T}}\\ & \leq C\p{g,C_\eta}T^{\frac{1}{p}}\norm{\p{F,\Phi^1,\Phi^2,\Phi^3,\gamma}}_{WB_k^p\p{M_T}}
\end{align*}
where we used Sobolev embedding on the norm of $v_l$, and the fact that $\hat{\eta}_l$ is independent of time, which gives
$$\norm{\hat{\eta}_l}_{W^{(2k+2),p}\p{\Omega_T}}=T^{\frac{1}{p}}\norm{\hat{\eta}_l}_{W^{2k+2,p}\p{\Omega}}.$$
Identical estimates can be derived for $\beta_g$ and $\MC_g'$, using the Solonnikov embedding:
\begin{align*}\norm{\beta_g\p{\sum_{l=1}^N\eta_l\psi^\ast_l v_l}-\Phi^1}_{B^{\p{2k+1-\frac{1}{p}},p}\p{\partial M_T}}& =\norm{\sum_{l=1}^N g^{-1}\star \nabla \eta_l\star  \psi_l^\ast v_l}_{B^{\p{2k+1-\frac{1}{p}},p}\p{\partial M_T}}\\ &\leq C\p{g}\sum_{l=1}^N\norm{\hat{\eta}_l}_{W^{(2k+2),p}\p{\Omega_T}}\norm{v_l}_{C^{\p{2k+1}}\p{\Omega_T}}
\end{align*}
\begin{align*}
\norm{\MC_g'\p{\partial_t\sum_{l=1}^N\eta_l\psi^\ast_l v_l}-\Phi^3}_{B^{\p{2k-1-\frac{1}{p}},p}\p{\partial M_T}}& =\norm{\sum_{l=1}^N g^{-1}\star \nabla \eta_l\star  \psi_l^\ast \p{\partial_t v_l}}_{B^{\p{2k-1-\frac{1}{p}},p}\p{\partial M_T}}\\ &\leq C\p{g}\sum_{l=1}^N\norm{\hat{\eta}_l}_{W^{(2k+2),p}\p{\Omega_T}}\norm{v_l}_{C^{\p{2k+1}}\p{\Omega_T}}.
\end{align*}
Combining all these estimates, and using that $\alpha_g\p{\partial_t \sum_{l=1}^N\eta_l\psi^\ast_l v_l}=\Phi^2$, we have 
\begin{equation}\label{opbound}\norm{\mathscr{A}\mathscr{B}-\Id }_{Op}\leq C\p{g,C_\eta}T^{\frac{1}{p}}\end{equation}
where $\norm{\cdot}_{Op}$ denotes the operator norm. Conversely, starting with $u\in W^{(2k+2),p}\p{M_T,\Sym}$, we write $\mathscr{B}\mathscr{A}u=\sum_{l=1}^N \eta_l\psi^\ast_l v_l$, where $v_l$ is a solution of (\ref{reallocalsystem}) with $h=\hat{g}_l+\p{1-\hat{1}_l}\delta$ and input data
\begin{align*}&\p{f,\phi^1,\phi^2,\phi^3,\omega}=\p{\psi_l}_\ast\p{\chi_l\partial_t u-\chi_l\trace_g \bnab^2 u,\chi_l \beta_g\p{u},\chi_l\alpha_{g}\p{\partial_t u},\chi_l\MC_g'\p{\partial_t u},\chi_l\left.u\right|_{t=0}}.\end{align*}
In particular, each $v_l$ satisfies 
\begin{equation}\label{vlulestimate}\norm{v_l}_{W^{(2k+2),p}\p{\Omega_T}}\leq C\p{\hat{g}_l}\norm{\hat{u}_l}_{W^{(2k+2),p}\p{\Omega_T}}.\end{equation}
We obviously have
\begin{equation}\label{BAu-u}\norm{\mathscr{B}\mathscr{A}u-u }_{W^{(2k+2),p}\p{M_T}}=\norm{\sum_{l=1}^N \eta_l\p{\psi^\ast_l v_l-u} }_{W^{(2k+2),p}\p{M_T}}\leq \sum_{l=1}^N\norm{\eta_l\p{\psi^\ast_l v_l-u}}_{W^{(2k+2),p}\p{M_T}} \end{equation}
and, by the definition of the $W^{\p{2k+2},p}$ norm,
$$\norm{\eta_l\p{\psi^\ast_l v_l-u}}_{W^{(2k+2),p}\p{M_T}}=\sum_{m=1}^N\norm{\p{\psi_m}_\ast \p{\chi_m\eta_l\p{\psi^\ast_l v_l-u}}}_{W^{(2k+2),p}\p{\Omega_T}}\leq C\norm{\hat{\eta}_l\p{v_l-\hat{u}_l}}_{W^{(2k+2),p}\p{\Omega_T}}$$
where $C$ is a constant depending on $N$ and the norms of the first order derivative of the transition functions $\psi_m^{-1}\circ \psi_l$. 
Now set $w_l:=\hat{\eta}_l\p{v_l-\hat{u}_l}$, then by (\ref{vlulestimate}) and the triangle inequality, $w_l$ satisfies 
\begin{equation}\label{triangleinequalityforw}\norm{w_l}_{W^{(2k+2),p}\p{\Omega_T}}\leq C\p{\hat{g}_l}\norm{\hat{u}_l}_{W^{(2k+2),p}\p{\Omega_T}}\end{equation}
Furthermore, since $\eta_l$ kills derivatives of $\chi_l$, we see that $w_l$ solves the system (\ref{reallocalsystem}) with data
\begin{align*} &\p{f,\phi^1,\phi^2,\phi^3,\omega} =\\
& \p{\hat{g}_l^{-1}\star\p{\bnab^2 \hat{\eta}_l\star w_l+\bnab \hat{\eta}_l\star \bnab w_l},\hat{g}_l^{-1}\star \p{\bnab \hat{\eta}_l+\hat{\eta}_l\star \bnab \hat{g}_l}\star w_l,0,\hat{g}_l^{-1}\star \p{\bnab \hat{\eta}_l+\hat{\eta}_l\star \bnab \hat{g}_l}\star \partial_t w_l,0}\end{align*}
from which comes the parabolic estimate of Proposition \ref{upperhalfplane}
\begin{align*}\norm{w_l}_{W^{(2k+2),p}\p{\Omega_T}}\leq & C\p{\hat{g}_l}\left(\norm{\bnab^2 \hat{\eta}_l\star w_l+\bnab \hat{\eta}_l\star \bnab w_l}_{W^{(2k),p}\p{\Omega_T}}+\norm{\bnab \hat{\eta}_l\star w_l}_{B^{\p{2k+1-\frac{1}{p}},p}\p{\partial \Omega_T}} \right.\\ &\qquad \qquad \qquad \quad  \left. +\norm{\bnab \hat{\eta}_l\star \partial_t w_l}_{B^{\p{2k-1-\frac{1}{p}},p}\p{\partial \Omega_T}} \right) \end{align*}
and, by application of the Solonnikov embedding, followed by the Sobolev embedding and use of (\ref{triangleinequalityforw}), we obtain
\begin{align*}\norm{w_l}_{W^{(2k+2),p}\p{\Omega_T}}\leq & C\p{\hat{g}_l}\norm{\hat{\eta}_l}_{W^{(2k+2),p}\p{\Omega_T}}\norm{w_l}_{C^{(2k+1)}\p{\Omega_T}}\leq C\p{\hat{g}_l}\norm{\hat{\eta}_l}_{C^{2k+2}\p{\Omega}}T^{\frac{1}{p}}\norm{\hat{u}_l}_{W^{(2k+2),p}\p{\Omega_T}}\end{align*}
Returning to (\ref{BAu-u}), we can now complete the estimate
\begin{align*}\norm{\mathscr{B}\mathscr{A}u-u}_{W^{(2k+2),p}\p{M_T}} & \leq C\sum_{l=1}^N\norm{w_l}_{W^{(2k+2),p}\p{\Omega_T}}\\ & \leq C\p{\hat{g}_l, C_\eta}T^{\frac{1}{p}}\sum_{l=1}^N\norm{\hat{u}_l}_{W^{(2k+2),p}\p{\Omega_T}}\\ &=	C\p{g, C_\eta}T^{\frac{1}{p}}\norm{u}_{W^{(2k+2),p}\p{M_T}}
\end{align*}
Thus $\mathscr{B}\mathscr{A}-\Id$ also satisfies the operator bound (\ref{opbound}). For every $T$ smaller than a critical size, depending only on the initial metric $g$, we can ensure that the bound itself is strictly less that $1$. With that follows invertibility of $\mathscr{A}\mathscr{B}$ and $\mathscr{B}\mathscr{A}$, from which we construct a left and right inverse of $\mathscr{A}$:
$$\p{\mathscr{B}\mathscr{A}}^{-1}\mathscr{B}\qquad \qquad \text{resp.} \qquad \qquad \mathscr{B}\p{\mathscr{A}\mathscr{B}}^{-1}$$
Thus $\mathscr{A}$ must in fact have a bounded inverse $\mathscr{A}^{-1}$, which to any element $\p{F,\Phi^1,\Phi^2,\Phi^3,g}\in WB_k^p\p{M_T}$ (note that the last entry is now fixed to the initial metric $g$), associates a unique solution $u\in W^{(2k+2),p}\p{M_T,\Sym}$ of (\ref{initialIBV}), satisfying 
$$\norm{u}_{W^{(2k+2),p}\p{M_T}}\leq \norm{\mathscr{A}^{-1}}_{Op}\norm{\p{F,\Phi^1,\Phi^2,\Phi^3,g}}_{WB_k^p\p{M_T}}$$
By construction, the operator norm of $\mathscr{A}^{-1}$ depends on $g$ and $C_\eta$, and it is independent of $T$, as long as the interval is short enough.
\end{proof}
Note that we refrain from specifying $\overline{g}$ as a dependency in any constant, since we regard this metric as fixed and immutable.
We follow \cite{PG} in defining the following complete space of metrics, in the vicinity of an arbitrary initial metric.

\begin{lemma}\label{damnedestimates} Let $p>n$ and define the space
$$M_K^T(g):=\left\{w\in W^{(4),p}\p{M_T,\Sym}\; ;\; w|_{t=0}=g,\; \partial_t w|_{t=0}=-2\Ric_g ,\; \norm{w-g}_{W^{(4),p}}\leq K\right\}$$
If $T>0$ is small enough, there exist constants, depending on $g$ and $K$ such that for $w\in M_K^T(g)$
$$\norm{w}_{W^{(2),p}\p{M_T}}\leq C(g,K)T^{\frac{1}{p}}$$	
$$\norm{w-g}_{C^{(2)}\p{ M_T}}\leq C(g,K)T^\gamma$$
where $\gamma=\frac{p-n}{2p}$. If $w_1,w_2\in M_K^T(g)$, then
$$\norm{w_1-w_2}_{C^{(2)}\p{ M_T}}\leq CT^\gamma \norm{w_1-w_2}_{W^{(4),p}\p{M_T}}$$
\end{lemma}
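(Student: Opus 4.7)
The plan is to combine the parabolic Sobolev-type embedding available for $W^{(4),p}\p{M_T}$ when $p>n$ with the fact that each of the three quantities we wish to bound vanishes, together with enough of its derivatives, at $t=0$. The threshold $p>n$ guarantees $4-(n+2)/p>2$, so $W^{(4),p}\p{M_T}$ embeds continuously into $C^{(l)}\p{M_T}$ for some $l>2$; together with $\norm{w-g}_{W^{(4),p}}\le K$, this shows that every parabolic derivative of $w-g$ of order at most two is controlled in $L^\infty\p{M_T}$ by a constant depending only on $g$ and $K$.

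For the first estimate I would write $w=g+(w-g)$. The time-independent piece $g$ contributes a factor of $T^{1/p}$ to every term of the $W^{(2),p}\p{M_T}$-norm after integrating over $[0,T]$; for the remainder $w-g$, the $L^\infty$-bound from the previous paragraph supplies the same factor upon integration over $M_T$, which has volume $T\cdot \mathrm{vol}(M)$. Summing the finitely many derivatives appearing in $\norm{\cdot}_{W^{(2),p}}$ yields the claimed $C(g,K)T^{1/p}$-bound.

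For the second estimate I would exploit $\left.\nabla^a(w-g)\right|_{t=0}=0$ for $a=0,1,2$. Applied at fixed $t$ to each of these spatial derivatives, the Morrey embedding $W^{1,p}\p{M}\hookrightarrow C^{0,1-n/p}\p{M}$ combined with the parabolic scaling (time counting with weight $1/2$) supplies a H\"older-in-time bound with exponent $\gamma=(1-n/p)/2=(p-n)/(2p)$. Since the spatial derivatives in question all vanish at $t=0$, H\"older continuity with vanishing initial value upgrades this to $\sup_{M_T}\abs{\nabla^a(w-g)}\le C(g,K)T^\gamma$ for $a=0,1,2$, which is the desired $C^{(2)}$-bound.

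The third estimate is essentially the same argument applied to $w_1-w_2$, with the crucial improvement that now \emph{both} $\left.(w_1-w_2)\right|_{t=0}=0$ \emph{and} $\left.\partial_t(w_1-w_2)\right|_{t=0}=0$ by definition of $M_K^T\p{g}$; this extra vanishing --- which fails for $w-g$ since $\left.\partial_t(w-g)\right|_{t=0}=-2\Ric_g$ --- is precisely what lets the $\partial_t$-component of the parabolic $C^{(2)}$-norm also pick up a $T^\gamma$ factor, while the prefactor is proportional to $\norm{w_1-w_2}_{W^{(4),p}}$ because the argument is linear in the difference. The main obstacle I anticipate is bookkeeping rather than conceptual: one must verify that the embedding constants into $C^{(l)}$ can be taken independent of $T$ for $T$ small, so that the H\"older exponent $\gamma$ from Morrey truly propagates through Solonnikov's parabolic norm scale as set up in Section 4.
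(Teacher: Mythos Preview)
Your first and third estimates are correct and close to the paper's approach. For the first, the paper obtains the factor $T^{1/p}$ via a pointwise Young-type inequality applied to $v=\partial_t^m D_x^\alpha\hat w_l$ together with the initial data $\left.\hat w_l\right|_{t=0}=\hat g_l$, $\left.\partial_t\hat w_l\right|_{t=0}=-2\Ric_{\hat g_l}$; your decomposition $w=g+(w-g)$ with $L^\infty$ control from the Sobolev embedding achieves the same thing. For the third, you exploit $\left.\partial_t^m(w_1-w_2)\right|_{t=0}=0$ for $m=0,1$, which is exactly what the paper does.

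The second estimate has a genuine gap. You bound only $\sup_{M_T}\abs{\nabla^a(w-g)}$ for $a=0,1,2$ and call this ``the desired $C^{(2)}$-bound'', but $C^{(2)}\p{M_T}$ is a \emph{parabolic} space and also carries top-order information involving $\partial_t(w-g)$; as you yourself observe in the next paragraph, $\left.\partial_t(w-g)\right|_{t=0}=-2\Ric_g\neq 0$, so your ``vanishing at $t=0$ plus H\"older-in-time'' mechanism cannot control that contribution. The paper repairs this by subtracting the first-order time Taylor term: setting $v:=\hat w_l-\hat g_l+2t\Ric_{\hat g_l}$ one has both $\left.v\right|_{t=0}=0$ \emph{and} $\left.\partial_t v\right|_{t=0}=0$, and it is precisely this double vanishing that licenses Solonnikov's interpolation inequality \cite[Lemma~4.2]{Sol1}, yielding $\norm{v}_{C^{(2)}}\le CT^{\varepsilon/2}\langle v\rangle_{(2+\varepsilon)/2,\,t}$. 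The correction $2t\Ric_{\hat g_l}$ is then estimated separately, and choosing $\varepsilon=1-n/p$ together with the Sobolev embedding $W^{(4),p}\hookrightarrow C^{(2+\varepsilon)}$ produces the stated exponent $\gamma=\varepsilon/2=(p-n)/(2p)$. Your heuristic ``Morrey $W^{1,p}\hookrightarrow C^{0,1-n/p}$ plus parabolic scaling'' neither pins down this exponent rigorously nor addresses the $\partial_t$-part of the norm; the subtraction trick is the missing ingredient.
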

\begin{proof} Let $w\in M_K^T(g)$ and let $\hat{w}_l$ be its local representation in the chart $U_l$ (See Definition \ref{fctspacesonM}). Set $v=\partial_t^m D^{\alpha} \hat{w}_l$ for $2m+|\alpha|\leq 2$, then,  by Young's inequality,
\begin{align*}\abs{v(x,t)}^p &\leq p\int_0^T\abs{\partial_t v(x,s)}\abs{v(x,s)}^{p-1}\; ds+\abs{v(x,0)}^p\\ &\leq \int_0^T\abs{\partial_t v(x,s)}^p\; ds+(p-1)\int_0^T\abs{v(x,s)}^p\; ds +\abs{v(x,0)}^p.
	\end{align*}
In particular, for $T<\frac{1}{p}$ 
$$\sup_t\abs{v(x,t)}^p< p\p{\int_0^T\abs{\partial_t v(x,s)}^p\; ds+\abs{v(x,0)}^p}.$$
By the initial conditions imposed on the space $M_K^T(g)$
\begin{align*}\norm{v}_{L^p\p{\Omega_T}}^p &\leq T\int_{0}^T\sup_t \abs{v(x,t)}^p\; dx\leq pT\p{\int_{\Omega_T}\abs{\partial_t v(x,t)}^p\; dx\; dt+\int_\Omega \abs{v(x,0)}^p\; dx}\\ &\leq CT\p{\norm{\hat{w}_l}_{W^{(4),p}\p{\Omega_T}}^p+\norm{\hat{g}_l}_{W^{2,p}\p{\Omega}}}\end{align*}
and the first inequality follows directly. As for the second part, do as follows:
\begin{equation}\label{hatw-hatg}\norm{\hat{w}_l-\hat{g}_l}_{C^{(2)}\p{\Omega_T}}\leq \norm{\hat{w}_l-\hat{g}_l+2t\Ric_{\hat{g}_l}}_{C^{(2)}\p{\Omega_T}}+CT\norm{\hat{g}_l}_{C^4\p{\Omega}}\end{equation}
Then $v:=\hat{w}_l-\hat{g}_l+2t\Ric_{\hat{g}_l}$ satisfies $\partial_t^mv|_{t=0}=0$ for $m=0,1$, which means we can use \cite[Lemma 4.2]{Sol1}:
\begin{align*}\norm{v}_{C^{(2)}\p{\Omega_T}}\leq &CT^{\frac{\varepsilon}2}\left\langle v\right\rangle_{\frac{2+\varepsilon}{2}, t}
\end{align*}
where 
\begin{align*}\left\langle v\right\rangle_{\frac{2+\varepsilon}{2},t}=& \sup_{x,t,s}\frac{\abs{\partial_t v(x,t)-\partial_t v(x,s)}}{\abs{t-s}^{\varepsilon/2}}+\sum_{|\alpha|=1}^2\sup_{x,t,s}\frac{\abs{D^\alpha v(x,t)-D^\alpha v(x,s)}}{\abs{t-s}^{(2+\varepsilon-\abs{\alpha})/2}}\\ =& \sup_{x,t,s} \frac{\abs{\partial_t \hat{w}_l(x,t)-\partial_t \hat{w}_l(x,s)}}{\abs{t-s}^{\varepsilon/2}}+ \sum_{|\alpha|=1}^2\sup_{x,t,s}\frac{\abs{D^\alpha \hat{w}_l(x,t)-D^\alpha \hat{w}_l(x,s)+2(t-s)D^\alpha\Ric_{\hat{g}_l}}}{\abs{t-s}^{(2+\varepsilon-\abs{\alpha})/2}} \\ \leq &\norm{\hat{w}_l}_{C^{(2+\varepsilon)}\p{\Omega_T}}+2T^{\frac{1-\varepsilon}{2}}\norm{\hat{g}_l}_{C^4\p{\Omega}}.
	\end{align*}
Consequently, plugging the above into (\ref{hatw-hatg}), we get
$$\norm{w-g}_{C^{(2)}\p{M_T}}\leq C_1T^{\frac{\varepsilon}2}\norm{w}_{C^{(2+\varepsilon)}\p{M_T}}+C_2T^{\frac{1}2}\norm{g}_{C^4\p{M}}$$
for any given $0<\varepsilon<1$. By choosing $\varepsilon=1-\frac{n}{p}$, we obtain the desired inequality by Sobolev embedding: $\norm{w}_{C^{(2+\varepsilon)}}\leq C\norm{w}_{W^{(4),p}}$. A similar, but simpler, calculation shows the inequality for $w_1-w_2$, since they satisfy $\left.\partial_t^m(w_1-w_2)\right|_{t=0}=0$ for $m=0,1$, without the need to add any terms.
\end{proof}

An application of Solonnikov embedding, Theorem \ref{SolEmb}, that we shall revisit several times in the coming pages, is when $u\in M_K^T(g)$:
$$\norm{\partial_t u}_{B^{\p{2-\frac{1}{p}},p}\p{\partial M_T}}=\norm{\partial_t \p{u-g}}_{B^{\p{2-\frac{1}{p}},p}\p{\partial M_T}}\leq C\norm{u-g}_{W^{(4),p}\p{M_T}}.$$

Assuming $K>0$ is small enough, such that every $w\in M_K^T(g)$ is a metric, we can apply the following constructions to the entirety of this space.

\begin{definition}\label{Rdtfield} Recall that the difference of two Christoffel symbols is a tensor:
$$T\p{g_1,g_2}_{ij}^k:=\Gamma\p{g_1}_{ij}^k-\Gamma\p{g_2}_{ij}^k.$$
When $g_2=\overline{g}$ is the background metric, we write this in local coordinates as 
\begin{equation}\label{Tlocalcoord}T(g,\overline{g})_{ij}^k=\frac{1}{2}g^{kl}\p{\bnab_i g_{jl}+\bnab_j g_{il}-\bnab_l g_{ij}}\end{equation}
Let $\tilde{g}_t$ be a 1-parameter family of metrics with $\tilde{g}_0=g$, and let $w$ be a non-degenerate symmetric 2-tensor. Define the fields 
\begin{itemize}
\item The DeTurck tensor/vector field 
$$\xi_t(w)_i=w_{ij}w^{kl}T\p{\tilde{g}_t,w}_{kl}^j\qquad \qquad \xi_t^{\#}(w)^{ j}=w^{kl}T\p{\tilde{g}_t,w}_{kl}^j.$$
If $g_s=g+sh$: 
$$\frac{d}{ds}\xi_t\p{g_s}_{i}=\beta_{g_s}(h)_i+\p{h_{ij}g_s^{kl}-g^s_{ij}h^{kl}}T\p{\tilde{g}_t,g_s}_{kl}^j$$
where $\beta_g\p{h}=\delta_g h+\frac{1}{2}d\trace_g h$ is the Bianchi operator. We will always assume that $\tilde{g}_t$ is chosen to satisfy $\left.\partial_t\tilde{g}_t\right|_{t=0}=0$ and  $\norm{\tilde{g}_t-g}_{W^{(4),p}\p{M_T}}\leq C\p{g,\tilde{g}_t}T^{\frac{1}{p}}$, for all reasonably small $T$.
\item The remainder tensor
 \begin{align*}\mathcal{R}\p{w,\overline{\nabla}w,\tilde{g},\bnab \tilde{g},\bnab^2\tilde{g}}=& w^{lm}w_{pj}\p{2T^p_{im}T^{i}_{lk}+2\tilde{T}^p_{im}T^{i}_{lk}-\tilde{T}^{i}_{lm}T_{ik}^p-\overline{R}_{klm}^p}\\ &+w^{lm}w_{pk}\p{2T^p_{im}T^{i}_{lj}+2\tilde{T}^p_{im}T^{i}_{lj}-\tilde{T}^{i}_{lm}T_{ij}^p-\overline{R}_{jlm}^p}\\ &-2w^{im}w_{pl}T_{ik}^pT_{jm}^l-w^{lm}\p{w_{ik}\bnab_j +w_{ij}\bnab_k}\tilde{T}_{lm}^{i}\end{align*}
 where $T:=T(w,\overline{g})$ and $\tilde{T}:=T\p{\tilde{g},\overline{g}}$.
\end{itemize}
\end{definition}

At the opening of this section, we proved an existence and uniqueness result for an IBP with stationary boundary conditions (Theorem \ref{initialRDT}). To apply this to the Ricci-deTurck flow equation,
\begin{equation}\label{realRdT}
	\partial_t w=-2\Ric_w-2\delta_w^*\xi_t(w)
\end{equation}
we will show that it is possible to write it as a heat-like equation, specifically the one we had as the main equation of (\ref{initialIBV}).

\begin{proposition}\label{RDTtoHeat} The Ricci-deTurck flow equation (\ref{realRdT}) can be rewritten as 
\begin{align}\label{heatformRdT}
	\partial_t w-\trace_w\bnab^2 w=\mathcal{R}\p{w,\bnab w,\tilde{g},\bnab\tilde{g},\bnab^2\tilde{g}}\end{align}
\end{proposition}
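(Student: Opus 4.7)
The approach is a local-coordinate computation, extending the classical DeTurck cancellation to the setting where the gauge vector field is built from the auxiliary metric $\tilde g_t$ rather than the fixed background $\overline{g}$. The plan is to expand both $-2\Ric_w$ and $-2\delta_w^*\xi_t(w)$ in the background connection $\bnab$, observe the cancellation of the non-Laplacian second-order terms, and identify the remaining lower-order terms with $\mathcal{R}$.

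First I would express $-2\Ric_w$ in terms of the difference tensor $T:=T(w,\overline{g})$. Using the standard identity relating the Ricci tensors of two connections via their difference tensor, and substituting the coordinate expression (\ref{Tlocalcoord}), the Ricci tensor unfolds into: a "good" second-derivative term proportional to $w^{lm}\bnab_l\bnab_m w_{jk}$; two "bad" second-derivative terms of the schematic form $w^{lm}\bnab_j\bnab_l w_{km}$ and $w^{lm}\bnab_k\bnab_l w_{jm}$; plus lower-order terms quadratic in $T$, contractions involving the background curvature $\overline R$, and terms arising from commuting $\bnab$-derivatives.

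Next I would compute $-2\delta_w^*\xi_t(w)$. Rewriting $\delta^*_w$ through the background connection (which introduces a correction of the schematic form $T\star\xi$) and differentiating the coordinate expression
$$\xi_t^{\#}(w)^j = w^{lm}T(\tilde g_t,w)^j_{lm} = w^{lm}(\tilde T - T)^j_{lm}$$
yields second-derivative contributions from the $-T$ piece, together with $\bnab\tilde T$-terms (equivalently $\bnab^2\tilde g$-terms). The central observation is that the second derivatives contributed by the $-T$ piece of $\xi_t$ cancel the "bad" second derivatives from $-2\Ric_w$ exactly, leaving only $w^{lm}\bnab_l\bnab_m w_{jk} = \trace_w\bnab^2 w$. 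This is the usual DeTurck trick; the only deviation from the classical case is that, because $\xi_t$ uses $\tilde g_t$ rather than $\overline{g}$, a $\tilde T$-residue survives and feeds into $\mathcal{R}$.

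Finally I would tally the surviving first- and zeroth-order remnants. These come from (i) the $T\star T$ and $\overline R$ pieces of the Ricci formula, (ii) commutators of $\bnab$-derivatives, (iii) the $T\star\xi$ correction from converting $\bnab^w$ to $\bnab$, and (iv) the $\tilde T$- and $\bnab\tilde T$-terms introduced by $\xi_t$. After matching indices and symmetrizing over the free indices $j,k$ (which accounts for the two parallel lines of terms in Definition \ref{Rdtfield}), the result agrees with the stated formula for $\mathcal{R}$. The main obstacle is purely combinatorial: tracking every term through the cancellation and verifying the final index match. A useful sanity check is the classical case $\tilde g_t \equiv \overline{g}$, for which $\tilde T$ vanishes and the $\tilde T$- and $\bnab\tilde T$-terms in $\mathcal{R}$ drop out, recovering the known heat form of the Ricci-deTurck equation.
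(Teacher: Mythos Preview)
Your proposal is correct and follows essentially the same route as the paper: the paper splits $\xi_t(w)=V(w)-W(w)$ with $V$ the $\tilde T$-part and $W$ the $T$-part, expands $-2\Ric_w$ via the curvature identity $R(w)^l_{ijk}=\overline R^l_{ijk}+\bnab_iT^l_{jk}-\bnab_jT^l_{ik}+T\star T$, and then computes $-2\Ric_w+2\delta_w^*W$ to exhibit the cancellation of the non-Laplacian second derivatives before adding $-2\delta_w^*V$ to produce the $\tilde T$- and $\bnab\tilde T$-terms of $\mathcal{R}$. Your description of the $-T$ piece effecting the DeTurck cancellation and the $\tilde T$ piece surviving as a residue is exactly this decomposition, so the only difference is presentational.
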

\begin{proof} We begin by splitting the deTurck field
$$\xi_t(w)_i=w_{ij}w^{kl}\tilde{T}_{kl}^j-w_{ij}w^{kl}T_{kl}^j=:V(w)_i-W(w)_i.$$
From \cite[equation 2.49]{CLN}, we have
\begin{equation}\label{Rw1}R(w)_{ijk}^l=\overline{R}_{ijk}^l+\bnab_i T_{jk}^l-\bnab_j T_{ik}^l+T_{jk}^pT_{ip}^l-T_{ik}^pT_{jp}^l\end{equation}
Using the local coordinates of (\ref{Tlocalcoord}), we expand the second and third terms:
\begin{align*}\bnab_i T_{jk}^l-\bnab_j T_{ik}^l =&\frac{1}{2}w^{lm}\p{\bnab_i\bnab_k w_{jm}-\bnab_i\bnab_m w_{jk}-\bnab_j\bnab_k w_{im}+\bnab_j\bnab_m w_{ik}}+\frac{1}{2}w^{lm}\p{w_{mp}\overline{R}^p_{jik}+w_{kp}\overline{R}_{jim}^p}\\ &+w^{lm}\p{T_{ik}^p\bnab_j-T^p_{jk}\bnab_i}w_{mp}.
\end{align*}
Summing over $i$ and $l$, (\ref{Rw1}) becomes 
\begin{align*}-\overline{\Ric}_{jk}-&\frac{1}{2}w^{im}\bnab_i\bnab_m w_{jk}-\frac{1}{2}w^{im}\bnab_j\bnab_k w_{im}+\frac{1}{2}w^{im}\p{\bnab_j\bnab_i w_{mk}+\bnab_k\bnab_i w_{mj}} + w^{im}\p{T_{ik}^p\bnab_j-T^p_{jk}\bnab_i}w_{mp}\\+& \frac{1}{2}w^{im}\p{w_{pj}\overline{R}^p_{kim}+w_{pk}\overline{R}^p_{jim}}
\end{align*}
meanwhile
\begin{align*}2\delta^*_w W(w)_{jk}=& w^{im}\p{w_{lk}\bnab_j+w_{lj}\bnab_k}T_{im}^{l}+w^{im}T_{im}^{l}\bnab_l w_{jk}+w^{ip}w^{mq}T_{im}^{l}\p{w_{lk}\bnab_j+w_{lj}\bnab_k}w_{pq}\\
= &w^{im}\p{\bnab_j\bnab_i w_{mk}+\bnab_k\bnab_i w_{mj}}-\frac{1}{2}w^{im}\p{\bnab_j\bnab_k+\bnab_k\bnab_j}w_{im}-2w^{im}w_{lp}T_{jk}^pT_{im}^{l}\\ &+w^{ip}w^{mq}T_{im}^{l}\p{w_{lk}\bnab_j+w_{lj}\bnab_k}w_{pq}.
\end{align*}
In combination with the rest of (\ref{Rw1});
\begin{align*}-2\Ric(w)_{jk}+2\delta^*_w W(w)_{jk}=&w^{im}\bnab_i\bnab_m w_{jk}-w^{im}\p{w_{pj}\overline{R}_{kim}^p+w_{pk}\overline{R}_{jim}^p}-2w^{im}w_{pl}T_{ik}^pT_{jm}^l\\&+w^{ip}w^{mq}T_{im}^l\p{w_{lk}\bnab_j+w_{lj}\bnab_k}w_{pq}\\
= & w^{im}\bnab_i\bnab_m w_{jk}-w^{im}\p{w_{pj}\overline{R}_{kim}^p+w_{pk}\overline{R}_{jim}^p}\\ &+ 2w^{ip}w_{lk}T^{l}_{im}T_{pj}^m+2w^{ip}w_{lj}T_{im}^lT_{pk}^m-2w^{im}w_{pl}T_{ik}^pT_{jm}^l.
\end{align*}
Similarly,
	\begin{align*}
	2\delta_{w}^*V(w)_{jk}=& 2\delta_{\overline{g}}^*V(w)-2w^{lm}\tilde{T}_{lm}^{i}w_{ip}T^{p}_{jk}\\ =& w^{lm}\tilde{T}_{lm}^{i}\bnab_i w_{jk}+\tilde{T}_{lm}^{i}\p{w_{ik}\bnab_j +w_{ij}\bnab_k}w^{lm}+w^{lm}\p{w_{ik}\bnab_j +w_{ij}\bnab_k}\tilde{T}_{lm}^{i}.\end{align*}
Thus, finally
\begin{align*}-2\Ric(w)_{jk}+2\delta^*_w W(w)_{jk}-2\delta^*_w V(w)_{jk}=& w^{im}\bnab_i\bnab_m w_{jk}-2w^{im}w_{pl}T_{ik}^pT_{jm}^l-w^{lm}\p{w_{ik}\bnab_j +w_{ij}\bnab_k}\tilde{T}_{lm}^{i}\\& +w^{lm}w_{pj}\p{2T^p_{im}T^{i}_{lk}+2\tilde{T}^p_{im}T^{i}_{lk}-\tilde{T}^{i}_{lm}T_{ik}^p-\overline{R}_{klm}^p}\\ &+w^{lm}w_{pk}\p{2T^p_{im}T^{i}_{lj}+2\tilde{T}^p_{im}T^{i}_{lj}-\tilde{T}^{i}_{lm}T_{ij}^p-\overline{R}_{jlm}^p}.\end{align*}
\end{proof}

The heat-form Ricci-deTurck equation (\ref{heatformRdT}) is, of course, not directly insertable in the IBP (\ref{initialIBV}), as both sides depend on $w$. It does, however, allow us to construct the aforementioned contraction argument, in the style of \cite{PG}. There is just one more hurdle to overcome, before we can find a solution to the Ricci-deTurck IBP: That the "I" and the "B" are compatible, in a way that allows the system to start at time $t=0$. We will say an initial metric $g$ is \textit{RdT-compatible} if, at the boundary,
\begin{align*}
\Ric_g^T-\varphi_g\p{\Ric_g}g^T=0\qquad \text{and}\qquad \MC_g'\p{\Ric_g}+\frac{1}{2}\varphi_g\p{\Ric_g}\MC_g=0	
\end{align*}
This corresponds to zeroth order compatibility conditions for the Ricci-deTurck flow IBP (and indeed the corresponding Ricci flow IBP). It is, however, not enough to provide compatibility in the Ricci case, which will require first order compatibility as well; cf. Theorem \ref{Ricciflowintro}.  \\

We now come to the main existence theorem regarding the Ricci-deTurck flow:

\begin{theorem}[Short-term existence of Ricci-deTurck flow] \label{STE} Let $g$ be an RdT-compatible smooth Riemannian metric on $M$ and let $p>n$. For any $K>0$, there exists a $T>0$ such that the IBP
\begin{equation}\label{STERdT}\begin{cases}
\partial_t u=-2\Ric_u-2\delta_u^*\xi_t(u) & \text{in  }M_T\\
\begin{rcases}
\xi_t(u)&=0\\
\MC_u'\p{\partial_t u}+\frac{\varphi_u\p{\partial_t u}}{2}\MC_u&=0\\
\p{\partial_t u}^T-\varphi_u\p{\partial_t u}u^T &=0	
\end{rcases} & \text{on  }\partial M_T\\
\left.u\right|_{t=0}=g
\end{cases}
\end{equation}
has a unique solution $u\in W^{(4),p}\p{M_T}$, satisfying $\norm{u-g}_{W^{(4),p}(M_T)}\leq K$.\end{theorem}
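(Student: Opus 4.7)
The plan is to solve the quasilinear IBP (\ref{STERdT}) by Banach's fixed point theorem on the closed subset $M_K^T(g)\subset W^{(4),p}(M_T,\Sym)$, with the iteration defined by linearizing the heat form (\ref{heatformRdT}) around the initial metric $g$. Specifically, I will define $\Psi\colon M_K^T(g)\to W^{(4),p}(M_T,\Sym)$ by $\Psi(w)=u$, where $u$ is the unique $W^{(4),p}$-solution --- produced by Theorem \ref{initialRDT} with $k=1$ --- of the linear IBP
$$\partial_t u-\trace_g\bnab^2 u=\mathcal{R}(w,\bnab w,\tilde{g},\bnab\tilde{g},\bnab^2\tilde{g})+\p{g^{ij}-w^{ij}}\bnab_i\bnab_j w\qquad \text{in }M_T,$$
with the linear boundary conditions $\beta_g(u)=\beta_g(w)-\xi_t(w)$, $\alpha_g(\partial_t u)=\alpha_g(\partial_t w)-\alpha_w(\partial_t w)$, and $\MC_g'(\partial_t u)=\MC_g'(\partial_t w)-\MC_w'(\partial_t w)-\tfrac12\varphi_w(\partial_t w)\MC_w$ on $\partial M_T$, together with initial datum $u|_{t=0}=g$. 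The right-hand sides are engineered precisely so that any fixed point $u=w$ of $\Psi$ collapses, via Proposition \ref{RDTtoHeat}, back to the original quasilinear IBP (\ref{STERdT}).

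Before Theorem \ref{initialRDT} applies, I must verify the compatibility up to order $2k-1=1$ of Definition \ref{conditions}. Since every $w\in M_K^T(g)$ satisfies $w|_{t=0}=g$ and $\partial_t w|_{t=0}=-2\Ric_g$, the zeroth-order compatibility conditions at $t=0$ reduce to the RdT-compatibility hypothesis on $g$, and first-order compatibility --- required only for those boundary operators whose $\sigma$-index permits one time derivative --- follows automatically from the form of the right-hand sides together with $\tilde{g}_t|_{t=0}=g$ and $\partial_t\tilde{g}_t|_{t=0}=0$. To show $\Psi$ self-maps $M_K^T(g)$ and contracts, I apply the parabolic estimate (\ref{parabolicest}) to $u-g$ (respectively to $u_1-u_2=\Psi(w_1)-\Psi(w_2)$, which solves a linear IBP with vanishing initial data), estimate the boundary right-hand sides via Solonnikov embedding (Theorem \ref{SolEmb}) and the H\"older-type inequality (\ref{Hoelder}), and invoke Lemma \ref{damnedestimates} to turn every occurrence of $(w-g)$ or $(w_1-w_2)$ into a factor $CT^\gamma$ times its $W^{(4),p}$-norm. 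Choosing $T$ small depending on $g$ and $K$, this yields
$$\|\Psi(w)-g\|_{W^{(4),p}(M_T)}\leq K \qquad \text{and}\qquad \|\Psi(w_1)-\Psi(w_2)\|_{W^{(4),p}(M_T)}\leq \tfrac12\|w_1-w_2\|_{W^{(4),p}(M_T)}.$$

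Banach's fixed point theorem then produces a unique $u\in M_K^T(g)$ with $\Psi(u)=u$, which by construction is the desired solution of (\ref{STERdT}). The main obstacle is the careful bookkeeping of the quasilinear expressions $\mathcal{R}(w)$, $\xi_t(w)$, $\alpha_w(\partial_t w)$, $\MC_w'(\partial_t w)$ and $\varphi_w(\partial_t w)\MC_w$: each of their differences must be rewritten --- by telescoping in the metric and its inverse --- so that every $W^{(2),p}$- or $B^{l,p}$-norm factors schematically as a coefficient uniformly bounded on $M_K^T(g)$ times a quantity estimated in $C^{(2)}$, at which point the $T^\gamma$-gain of Lemma \ref{damnedestimates} closes the contraction. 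A secondary subtlety is pinpointing which of the three boundary operators in (\ref{reallocalsystem}) genuinely demand first-order compatibility, so that the zeroth-order RdT-compatibility of $g$ (supplemented by the cancellations built into the $\Phi^i(w)$) really suffices.
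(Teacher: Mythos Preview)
Your proposal is correct and follows essentially the same approach as the paper: linearize the heat form (\ref{heatformRdT}) around $g$, define a solution map on $M_K^T(g)$ via Theorem \ref{initialRDT} with precisely the right-hand sides $F_w$, $\Phi_w^1=\beta_g(w)-\xi_t(w)$, $\Phi_w^2=\alpha_g(\partial_t w)-\alpha_w(\partial_t w)$, $\Phi_w^3=\MC_g'(\partial_t w)-\MC_w'(\partial_t w)-\tfrac12\varphi_w(\partial_t w)\MC_w$, and then run the contraction argument using Lemma \ref{damnedestimates} and the Solonnikov embedding to extract the $T^{\gamma_p}$ factors. The paper carries out in full the telescoping and term-by-term estimates you sketch, and your discussion of compatibility (reducing to RdT-compatibility of $g$ via $w|_{t=0}=g$, $\partial_t w|_{t=0}=-2\Ric_g$, and $\partial_t\tilde g_t|_{t=0}=0$) is exactly what is needed.
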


\begin{proof}
Consider the the space $M_K^T(g)$, defined in Lemma \ref{damnedestimates}.  We may assume that $T$ is small enough such that every element $w$ of $ M_K^T(g)$ is a metric and $\norm{w^{-1}}_{C^{(2)}\p{M_T}}\leq C(g,K)$. Utilizing the remainder tensor $\mathcal{R}$, defined in Definition \ref{Rdtfield}, we set
$$F_w:=\mathcal{R}\p{w,\overline{\nabla}w,\tilde{g},\bnab \tilde{g},\bnab^2\tilde{g}}+\trace_w\overline{\nabla}^2 w-\trace_g\overline{\nabla}^2 w$$
$$\Phi_w^1:=\beta_g(w)-\xi_t(w)$$
$$\Phi_w^2:=\varphi_w\p{\partial_t w}w^T-\varphi_g\p{\partial_t w}g^T$$
$$\Phi_w^3:=\MC'_g\p{\partial_t w}-\MC'_w\p{\partial_t w}-\frac{\varphi_w\p{\partial_t w}}{2}\MC_w.$$
Note that $\p{F_w,\Phi^1_w,\Phi^2_w,\Phi^3_w,g}\in WB_1^p\p{M_T}$, so Theorem \ref{initialRDT} provides unique solvability of the IBP
\begin{equation}\label{IBVS}\begin{cases}
\partial_t u-\trace_g\overline{\nabla}^2u \qquad \quad \;\;\; =F_w & \text{in  }M_T\\
\begin{rcases}
\beta_g(u)&=\Phi_w^1\\
\p{\partial_t u}^T-\varphi_g\p{\partial_t u}g^T &=\Phi_w^2\\
\MC_g'\p{\partial_t u}&=\Phi_w^3
\end{rcases} & \text{on  }\partial M_T\\
\left.u\right|_{t=0}\qquad \qquad \qquad \quad\:\: =g.
\end{cases}
\end{equation}
The solution is of class $W^{(4),p}\p{M_T}$, and it qualifies for the parabolic estimate (\ref{parabolicest}) in this norm. Thus, we have a well-defined solution-map
$$S:M_K^T(g)\to W^{(4),p}\p{M_T}$$
that we will prove is a contraction for small $T$. If we allow the background metric $\overline{g}$ to be subsumed by the $\star$-contractions, we may write 
\begin{align}\label{Rtensor}\mathcal{R}\p{w,\overline{\nabla}w,\tilde{g},\bnab \tilde{g},\bnab^2\tilde{g}}=w^{-1}\star w\star \p{\overline{R}+\bnab\tilde{T}}+w^{-2}\star \p{\bnab w}^2+w^{-2}\star \tilde{g}^{-1}\star w\star \bnab w\star \bnab \tilde{g}.\end{align}
We will now give an estimate for the $W^{(2),p}\p{M_T}$ norm of $\mathcal{R}$. To allow us to focus on the essential terms, we will note that we will make repeated use of the bounds $\norm{w^{-1}}_{C^{(2)}\p{M_T}}\leq C\p{g,K}$ and $\norm{\tilde{g}^{-1}}_{C^{(2)}\p{M_T}}\leq C\p{\tilde{g}}$. Thus, for example,
$$\norm{w^{-1}\star w\star \p{\overline{R}+\bnab \tilde{T}}}_{W^{\p{2},p}}\leq C(g,K)\norm{w}_{W^{(2),p}}+C(g,K)\norm{w\star \bnab\tilde{T}}_{W^{(2),p}}.$$
By expanding the second order term 
$$\bnab\tilde{T}=\tilde{g}^{-2}\star \p{\bnab \tilde{g}}^2+\tilde{g}^{-1}\star \bnab^2\tilde{g}$$
we find that 
$$\norm{w\star \bnab\tilde{T}}_{W^{(2),p}}\leq C\p{\tilde{g}}\p{\norm{w\star \p{\bnab \tilde{g}}^2}_{W^{(2),p}}+\norm{w\star \bnab^2\tilde{g}}_{W^{(2),p}}}$$
and, by expanding the $W^{\p{2},p}$ norm and estimating term by term, 
$$\norm{w\star \p{\bnab \tilde{g}}^2}_{W^{(2),p}}\leq \norm{\tilde{g}}_{C^{(2)}}^2\norm{w}_{W^{(2),p}}+\norm{w}_{C^{(0)}}\norm{\tilde{g}}_{C^{(1)}}\norm{\tilde{g}}_{W^{(4),p}}\leq C(\tilde{g})\norm{w}_{W^{(2),p}}\leq C\p{g,\tilde{g}}T^{\frac{1}{p}}$$
$$\norm{w\star \bnab^2\tilde{g}}_{W^{(2),p}}\leq \norm{w}_{C^{(2)}}\norm{\tilde{g}-g}_{W^{(4),p}}+\norm{g}_{C^{4}}\norm{w}_{W^{(2),p}}\leq C\p{g,\tilde{g}}T^{\frac{1}{p}}$$
 where we applied Lemma \ref{damnedestimates}, our choice of $\tilde{g}$ (See Def. \ref{Rdtfield}) and the Sobolev embedding: $\norm{w}_{C^{(2)}}\leq C\norm{w}_{W^{(4),p}}\leq C\p{g,K}$. We use a similar tactic of splitting the norm on the second term of (\ref{Rtensor}), as well as an instance of the Sobolev embedding ($\norm{w}_{C^{(1)}}\leq \norm{w}_{W^{(2),p}}$):
 $$\norm{\p{\bnab w}^2}_{W^{(2),p}}\leq 2\norm{w}_{C^{(1)}}\norm{w}_{W^{(4),p}}+\norm{w}_{C^{(2)}}\norm{w}_{W^{(2),p}}\leq C\p{g,K}\norm{w}_{W^{\p{2},p}}$$
and lastly
\begin{align*}\norm{w\star \bnab w\star \bnab \tilde{g}}_{W^{(2),p}} &\leq \norm{w}_{C^{(1)}}\norm{\tilde{g}}_{C^{(2)}}\norm{w}_{W^{(2),p}}+\p{\norm{\tilde{g}}_{C^{(1)}}\norm{w}_{W^{(4),p}}+\norm{w}_{C^{(1)}}\norm{\tilde{g}}_{W^{(4),p}}}\norm{w}_{C^{(0)}}\\ &\leq C\p{g,\tilde{g},K}\norm{w}_{W^{(2),p}}\leq C\p{g,\tilde{g},K}T^{\frac{1}{p}}. \end{align*}
As for the remainder of $F_w$, we will use $w^{ij}-g^{ij}=g^{ki}w^{lj}\p{g_{kl}-w_{kl}}$:
\begin{align*}\norm{\trace_w\overline{\nabla}^2w-\trace_g\overline{\nabla}^2 w}_{W^{(2),p}}=\norm{(w^{-1}-g^{-1})\star \overline{\nabla}^2w}_{W^{(2),p}}\leq C\p{g,K}\norm{w-g}_{C^{(2)}}\norm{w}_{W^{(4),p}}\leq C(g,K)T^{\gamma}\end{align*}
where $\gamma$ comes from part $(ii)$ of Lemma \ref{damnedestimates}. We now have
$$\norm{F_w}_{W^{(2),p}}\leq C(g,\tilde{g},K)T^{\gamma_p}$$
where $\gamma_p=\min\left\{\frac{1}{p},\gamma=\frac{p-n}{2p}\right\}$. By writing $T\p{g_1,g_2}=T\p{g_1,\overline{g}}-T\p{g_2,\overline{g}}$, we obtain by (\ref{Tlocalcoord}) the expression
\begin{align*}T\p{g_1,g_2}= g_2^{-1}\star \bnab \p{g_1-g_2}+\p{g_1^{-1}-g_2^{-1}}\star \bnab g_1.\end{align*}
In the case where $g_1=\tilde{g}$ and $g_2\in M_K^T\p{g}$, the Solonnikov embedding (Theorem \ref{SolEmb}) implies
$$\norm{T\p{\tilde{g},g_2}}_{B^{\p{3-\frac{1}{p}},p}}\leq C\p{g,\tilde{g},K}\norm{\tilde{g}-g_2}_{W^{\p{4},p}}.$$
If we set $h:=w-g$ and $g_s:=g+sh$, then $g_s\in M_K^T\p{g}$ for all $0\leq s\leq 1$, and we can write 
$$\xi_t\p{w}_m=\int_{0}^1\frac{d}{ds}\xi_t(g_s)_m\; ds +\xi_t(g)_m=\int_0^1\beta_{g_s}(h)_m\; ds+\int_0^1 A(s,t)_m\; ds$$
where 
\begin{align*}A(s,t)_m:=&\p{h_{lm}g_s^{ik}-g^s_{lm}g_s^{ij}g_s^{nk}h_{jn}}T\p{\tilde{g},g_s}+\xi_t(g)_m \\ =&\p{g_s^{-1}-g_s^{-1}\star g_s^{-1}\star g_s}\star h\star T\p{\tilde{g},g_s}+g^{-1}\star g\star T\p{\tilde{g},g}.\end{align*}
Recalling that $\tilde{g}$ is chosen to satisfy $\norm{\tilde{g}-g}_{W^{(4),p}(M_T)}\leq C(g,\tilde{g})T^{\frac{1}{p}}$, we obtain by the Solonnikov embedding
$$\norm{A}_{B^{\p{3-\frac{1}{p}},p}}\leq C\p{g,\tilde{g}}\p{\norm{h}_{C^{(2)}}\norm{\tilde{g}-g_s}_{W^{(4),p}}+\norm{\tilde{g}-g}_{W^{(4),p}}}\leq C\p{g,\tilde{g},K}T^{\gamma_p}.$$
The Bianchi operator has the compact form 
$$\beta_g(h)=g^{-1}\star \bnab h+g^{-2}\star \bnab g\star h$$
so the rest of $\Phi^1_w$ can be written as 
$$B(s):=\beta_g(h)-\beta_{g_s}(h)=\p{g^{-1}-g_s^{-1}}\star \bnab h+ \p{g^{-2}-g_s^{-2}}\star \bnab g\star h+g_s^{-2}\star \bnab\p{g-g_s}\star h$$
so by the Solonnikov embedding
$$\norm{B}_{B^{\p{3-\frac{1}{p}},p}}\leq C\p{g,K}\norm{h}_{C^{(2)}}\norm{h}_{W^{(4),p}}\leq C(g,K)T^{\gamma}.$$
We can now give the estimate
$$\norm{\Phi_w^1}_{B^{\p{3-\frac{1}{p}},p}}=\norm{\int_0^1 B-A\; ds}_{B^{\p{3-\frac{1}{p}},p}}\leq C(g,\tilde{g},K)T^{\gamma_p}.$$
The second boundary operator poses no great problem: If $k:=\partial_t h=\partial_t w$, then
$$\Phi^2_w=\p{w^{-1}\star w-g^{-1}\star g}\star k$$
which means 
$$\norm{\Phi^2_w}_{B^{\p{2-\frac{1}p},p}}\leq C(g,K)\norm{w-g}_{C^{(2)}}\norm{k}_{B^{\p{2-\frac{1}p},p}}\leq C(g,K)T^\gamma\norm{w-g}_{W^{(4),p}}\leq C(g,K)T^\gamma$$
where $\norm{k}_{B^{\p{2-\frac{1}{p}}}}\leq C\norm{w-g}_{W^{(4),p}}$ follows directly from the Solonnikov embedding. We will now estimate the third boundary operator. To that end, we denote by $\nu$ and $\dnu$ local representations of the unit normals of $g$ and $w$, respectively. The linearization of the mean curvature can be written most compactly as 
$$\MC_g'(k)=\p{g^{-2}+g^{-1}}\star \bnab \p{g\star \nu \star k}.$$
Thus
\begin{align*}\label{mean-mean}\MC_g'(k)-\MC_w'(k)=& \p{g^{-2}-w^{-2}+g^{-1}-w^{-1}}\star \bnab \p{g\star \nu\star k}+\p{w^{-2}+w^{-1}}\star \bnab\p{\p{g\star \nu-w\star \dnu}\star k}.\end{align*}
Using the bound $\abs{\nu-\dnu}_{\overline{g}}\leq C\abs{g-w}_{\overline{g}}$ and Solonnikov embedding, we obtain
\begin{align*}
\norm{\MC_g'(k)-\MC_w'(k)}_{B^{\p{1-\frac{1}{p}},p}}\leq & C(g,K)\norm{g-w}_{C^{(0)}}\norm{g\star \nu \star k}_{W^{(2),p}}+C(g,K)\norm{\p{g\star \nu-w\star \dnu}\star k}_{W^{(2),p}}\\ \leq & C(g,K)\norm{g-w}_{C^{(2)}}\norm{w}_{W^{(4),p}}\\ \leq & C(g,K)T^\gamma
\end{align*} 
by the usual application of Lemma \ref{damnedestimates}. Lastly, 
$$\varphi_w\p{k}\MC_w=w^{-2}\star k\star \bnab \p{\dnu \star w}$$
from which we get
$$\norm{\varphi_w\p{k}\MC_w}_{B^{\p{1-\frac{1}{p}},p}}\leq C(g,K)\norm{k}_{C^{\p{0}}}\norm{\dnu \star w}_{W^{\p{2},p}}\leq C(g,K)\norm{w-g}_{C^{(2)}}\leq C(g,K)T^\gamma.$$
The metric $u:=S(w)-g$ solves an IBP almost identical to (\ref{IBVS}), the only differences being the initial value $\left.u\right|_{t=0}=0$ and the interior identity 
$$\partial_t u-\trace_g\overline{\nabla}^2u=F_w+\trace_g\overline{\nabla}^2g.$$
The corresponding parabolic estimate, and the work we did before, shows
\begin{align*}\norm{S(w)-g}_{W^{(4),p}}&\leq C\norm{F_w+\trace_g\overline{\nabla}^2g}_{W^{(2),p}}+C\norm{\Phi_w^1}_{B^{\p{3-\frac{1}p},p}}+C\norm{\Phi^2_w}_{B^{\p{2-\frac{1}{p}},p}}+C\norm{\Phi^3_w}_{B^{\p{1-\frac{1}{p}},p}}\\ &\leq C(g,\tilde{g},K)T^{\gamma_p}\end{align*}
for $T$ small enough. This shows we can choose $T$ sufficiently small, such that $S$ is a map from $M_K^T(g)$ to itself.\\

\noindent To do a contraction argument, we take $w_1,w_2\in M_K^T\p{g}$ and set out to estimate $S(w_1)-S(w_2)$. Starting with the remainder tensor,
\begin{align*}
\mathcal{R}&\p{w_1,\bnab w_1,\tilde{g},\bnab \tilde{g},\bnab^2 \tilde{g}}-\mathcal{R}\p{w_2,\bnab w_2,\tilde{g},\bnab \tilde{g},\bnab^2 \tilde{g}}= \p{w_1^{-1}\star w_1-w_2^{-1}\star w_2}\star\p{ \overline{R}+\bnab \tilde{T}}\\ &+w_1^{-2}\star \p{\bnab w_1}^2-w_2^{-2}\star \p{\bnab w_2}^2+\p{w_1^{-2}\star w_1\star \bnab w_1-w_2^{-2}\star w_2\star \bnab w_2}\star \tilde{g}^{-1}\star \bnab \tilde{g}.
\end{align*}
In much the same way as we estimated $\mathcal{R}$ earlier:
\begin{align*}\norm{\p{w_1^{-1}\star w_1-w_2^{-1}\star w_2}\star\p{ \overline{R}+\bnab \tilde{T}}}_{W^{(2),p}}& \leq C\p{g,K}\p{1+\norm{\tilde{g}}_{W^{(4),p}}}\norm{w_1-w_2}_{C^{(2)}}\\ &\leq C\p{g,\tilde{g},K}T^\gamma \norm{w_1-w_2}_{W^{(4),p}},\end{align*}
\begin{align*} &\norm{w_1^{-2}\star \p{\bnab w_1}^2-w_2^{-1}\star \p{\bnab w_2}^{2}}_{W^{(2),p}}\leq \norm{\p{w_1^{-2}-w_2^{-2}}\star \p{\bnab w_1}^2}_{W^{(2),p}}+C\p{g,K}\norm{\p{\bnab w_1}^2-\p{\bnab w_2}^2}_{W^{(2),p}}\\ &\qquad\qquad  \qquad \leq C\p{g,K}\norm{w_1-w_2}_{C^{(2)}}+C\p{g,K}\p{\norm{w_1+w_2}_{W^{(2),p}}\norm{w_1-w_2}_{W^{(4),p}}+\norm{w_1-w_2}_{W^{(2),p}}}\\ &\qquad\qquad \qquad \leq C\p{g,K}T^{\gamma_p}\norm{w_1-w_2}_{W^{(4),p}}
\end{align*}
and 
\begin{align*} & \norm{\p{w_1^{-2}\star w_1\star \bnab w_1-w_2^{-2}\star w_2\star \bnab w_2}\star \tilde{g}^{-1}\star \bnab \tilde{g}}_{W^{(2),p}}\\& \qquad \leq C\p{g,\tilde{g},K}\p{\norm{\p{w_1-w_2}\star \bnab w_1\star \bnab \tilde{g}}_{W^{(2),p}}+\norm{w_2\star \bnab \p{w_1-w_2}\star \bnab \tilde{g}}_{W^{(2),p}}}\\ &\qquad \leq  C\p{g,\tilde{g},K}\p{\norm{\tilde{g}}_{C^{(1)}}\p{\norm{w_1}_{W^{(4),p}}+\norm{w_2}_{W^{(4),p}}}+\p{\norm{w_1}_{C^{(1)}}+\norm{w_2}_{C^{(1)}}}\norm{\tilde{g}}_{W^{(4),p}}}\norm{w_1-w_2}_{W^{(2),p}}\\ &\quad \qquad  +C\p{g,\tilde{g},K}\norm{w_2}_{C^{(0)}}\norm{\tilde{g}}_{W^{(2),p}}\norm{w_1-w_2}_{W^{(4),p}}\\ &\qquad \leq C\p{g,\tilde{g},K}T^{\frac{1}{p}}\norm{w_1-w_2}_{W^{(4),p}}.
\end{align*}
By which we may conclude
\begin{align*}\norm{\mathcal{R}\p{w_1,\bnab w_1,\tilde{g},\bnab \tilde{g},\bnab^2 \tilde{g}}-\mathcal{R}\p{w_2,\bnab w_2,\tilde{g},\bnab \tilde{g},\bnab^2 \tilde{g}}}_{W^{\p{2},p}}\leq C\p{g,\tilde{g},K}T^{\gamma_p}\norm{w_1-w_2}_{W^{(4),p}}.  \end{align*}
As for the rest of $F$, we have 
$$\trace_{w_1}\bnab^2w_1-\trace_{w_2}\bnab^2w_2-\trace_g\bnab^2w_1+\trace_g\bnab^2w_2=\p{w_1^{-1}-w_2^{-1}}\star \bnab^2w_1+\p{w_2^{-1}-g^{-1}}\star\bnab^2\p{w_1-w_2}$$
and, in the $W^{(2),p}$ norm, it is bounded by
$$C\p{g,K}\p{\norm{w_1-w_2}_{C^{(2)}}\norm{w_1}_{W^{(4),p}}+\norm{w_2-g}_{C^{(2)}}\norm{w_1-w_2}_{W^{(4),p}}}\leq C(g,K)T^\gamma \norm{w_1-w_2}_{W^{(4),p}}.$$
Moving on to the first boundary operator $\Phi^1$, we will need the notation 
$$A_\delta(s,t)_m:=\underbrace{\p{h^\delta_{lm}g_{\delta,s}^{ik}-g^{\delta,s}_{lm}g_{\delta,s}^{ij}g_{\delta,s}^{nk}h^\delta_{jk}}}_{=:\p{H_\delta}_{lm}^{ik}}T\p{\tilde{g},g_{\delta,s}}_{ik}^l+\xi_t(g)_m,$$
$$B_\delta(s):=\beta_{g}\p{h_\delta}-\beta_{g_{\delta,s}}\p{h_\delta}$$
where $\delta=1,2$, while $h_\delta=w_\delta-g$ and $g_{\delta,s}=g+sh_\delta$. Recall that 
$$\Phi^1_{w_1}-\Phi^1_{w_2}=\int_{0}^1\p{B_1-B_2}-\p{A_1-A_2}\; ds.$$
We may write the $A$'s as
$$A_1-A_2=\p{\p{H_1}_{lm}^{ik}-\p{H_2}_{lm}^{ik}}T\p{\tilde{g},g_{1,s}}_{ik}^l+\p{H_2}_{lm}^{ik}T\p{g_{1,s},g_{2,s}}_{ik}^l.$$
Then, with yet another application of the Solonnikov embedding,
\begin{align*}\norm{A_1-A_2}_{B^{\p{3-\frac{1}{p}},p}}\leq & C\p{g,\tilde{g},K}\norm{H_1-H_2}_{C^{(2)}}\norm{\tilde{g}-g_{1,s}}_{W^{\p{4},p}}+C(g,K)\norm{H_2}_{C^{(2)}}\norm{g_{1,s}-g_{2,s}}_{W^{\p{4},p}}\\ 
\leq & C(g,\tilde{g},K)\norm{h_1-h_2}_{C^{(2)}}+C(g,K)\norm{h_2}_{C^{(2)}}\norm{w_1-w_2}_{W^{(4),p}}.\\
\leq & C(g,\tilde{g},K)T^\gamma \norm{w_1-w_2}_{W^{(4),p}} \end{align*}
As for the $B$'s, we have 
\begin{align*}B_1-B_2=\beta_g\p{h_1-h_2}-\beta_{g_{1,s}}\p{h_1-h_2}+\beta_{g_{2,s}}\p{h_2}-\beta_{g_{1,s}}\p{h_2}.
\end{align*}
The first two Bianchi operators can be written as
\begin{align*}\beta_g\p{h_1-h_2}-\beta_{g_{1,s}}\p{h_1-h_2} =&\p{g^{-1}-g_{1,s}^{-1}}\star \bnab\p{h_1-h_2}\\&+\p{\p{g^{-2}-g_{1,s}^{-2}}\star \bnab g+g_{1,s}^{-2}\star \bnab \p{g-g_{1,s}}}\star \p{h_1-h_2}\end{align*}
from which we get 
\begin{align*}\norm{\beta_g\p{h_1-h_2}-\beta_{g_{1,s}}\p{h_1-h_2}}_{B^{\p{3-\frac{1}{p}},p}} &\leq C\p{g,K}\p{\norm{h_1}_{C^{(2)}}\norm{h_1-h_2}_{W^{(4),p}}+\norm{h_1}_{W^{(4),p}}\norm{h_1-h_2}_{C^{(2)}} }\\ &\leq C\p{g,K}T^\gamma\norm{w_1-w_2}_{W^{(4),p}}.\end{align*}
Similarly,
\begin{align*}\norm{\beta_{g_{2,s}}\p{h_2}-\beta_{g_{1,s}}\p{h_2}}_{B^{\p{3-\frac{1}{p}},p}} &\leq C\p{g,K}\p{\norm{g_{1,s}-g_{2,s}}_{C^{(2)}}\norm{h_2}_{W^{(4),p}}+\norm{g_{1,s}-g_{2,s}}_{W^{(4),p}}\norm{h_2}_{C^{(2)}}}\\ &\leq C\p{g,K}T^\gamma \norm{w_1-w_2}_{W^{(4),p}}.\end{align*}
We can now conclude that 
$$\norm{\Phi_{w_1}^1-\Phi_{w_2}^1}_{B^{\p{3-\frac{1}{p}},p}}\leq\norm{A_1-A_2}_{B^{\p{3-\frac{1}{p}},p}}+\norm{B_1-B_2}_{B^{\p{3-\frac{1}{p}},p}}\leq C(g,\tilde{g},K)T^\gamma\norm{w_1-w_2}_{W^{(4),p}}. $$
Even now, the second operator is no obstacle:
$$\Phi^2_{w_1}-\Phi^2_{w_2}=\p{w_1^{-1}\star w_1-w_2^{-1}\star w_2}\star k_1+\p{w_2^{-1}\star w_2-g^{-1}\star g}\star \p{k_1-k_2}$$
where $k_1=\partial_t w_1$ and $k_2=\partial_t w_2$. By the Solonnikov embedding; $\norm{k_1-k_2}_{B^{\p{2-\frac{1}{p}},p}}\leq \norm{w_1-w_2}_{W^{(4),p}}$. Thus
\begin{align*}\norm{\Phi^2_{w_1}-\Phi^2_{w_2}}_{B^{\p{2-\frac{1}p},p}}&\leq C(g,K)\norm{w_1-w_2}_{C^{(2)}}\norm{w_1}_{W^{(4),p}}+C(g,K)\norm{w_2-g}_{C^{(2)}}\norm{w_1-w_2}_{W^{(4),p}}\\
&\leq C(g,K)T^\gamma\norm{w_1-w_2}_{W^{(4),p}}.	
\end{align*}
As for the third boundary operator, we have    
\begin{align*} \Phi^3_{w_1}-\Phi^3_{w_2}=&\MC'_g\p{k_1-k_2}-\MC'_{w_2}\p{k_1-k_2}+\p{\MC'_{w_2}\p{k_1}-\MC'_{w_1}\p{k_1}}\\
&+w_2^{-1}\star k_2\star \p{\MC_{w_2}-\MC_{w_1}}+\p{w_1^{-1}\star k_1-w_2^{-1}\star k_2}\star \MC_{w_1}.
\end{align*}
Recall the compact notation:
$$\MC_g'(k)=\p{g^{-2}+g^{-1}}\star \bnab \p{g\star \nu \star k}.$$
Thus, if $\dnu_2$ denotes the unit normal field of $w_2$ 
\begin{align*}\MC'_g\p{k_1-k_2}-\MC'_{w_2}\p{k_1-k_2}=&\p{g^{-2}-w_2^{-2}+g^{-1}-w_2^{-1}}\star \bnab \p{g\star \nu \star \p{k_1-k_2}}\\ & +\p{w_2^{-2}+w_2^{-1}}\star \bnab\p{\p{g\star \nu-w_2\star \dnu_2}\star \p{k_1-k_2}}
\end{align*}
and, by Solonnikov embedding and the usual tricks:
\begin{align*}\norm{\MC'_g\p{k_1-k_2}-\MC'_{w_2}\p{k_1-k_2}}_{B^{\p{1-\frac{1}{p}},p}}\leq & C(g,K)\norm{g-w_2}_{C^{(0)}}\norm{g\star \nu\star \p{k_1-k_2}}_{W^{(2),p}}\\
&+C(g,K)\norm{\p{g\star \nu-w_2\star \dnu_2}\star \p{k_1-k_2}}_{W^{(2),p}}\\ \leq &C(g,K)\norm{g-w_2}_{C^{(2)}}\norm{w_1-w_2}_{W^{(4),p}}\\  \leq & C(g,K)T^\gamma \norm{w_1-w_2}_{W^{(4),p}}.\end{align*}
Similarly,
\begin{align*}
	\MC'_{w_1}\p{k_1}-\MC'_{w_2}\p{k_1}=& \p{w_1^{-2}-w_2^{-2}+w_1^{-1}-w_2^{-1}}\star \bnab\p{w_1\star \dnu_1\star k_1}\\ &+\p{w_2^{-2}+w_2^{1}}\star \bnab \p{\p{w_1\star \dnu_1-w_2\star \dnu_2}\star k_1}
\end{align*}
which, just as before, suggests 
\begin{align*}
\norm{\MC'_{w_1}\p{k_1}-\MC'_{w_2}\p{k_1}}_{B^{\p{1-\frac{1}{p}},p}}\leq & C(g,K)\norm{w_1-w_2}_{C^{(0)}}\norm{w_1\star \dnu_1\star k_1}_{W^{(2),p}}\\ & +C(g,K)\norm{\p{w_1\star \dnu_1-w_2\star \dnu_2}\star k_1}_{W^{(2),p}}\\ \leq & C(g,K)\norm{w_1-w_2}_{C^{(2)}}\norm{w_1}_{W^{(4),p}}\\  \leq & C(g,K)T^\gamma \norm{w_1-w_2}_{W^{(4),p}}.
\end{align*}
As for the difference of mean curvatures,
\begin{align*}
\MC_{w_1}-\MC_{w_2}=w_1^{-1}\star \bnab \p{\dnu_1\star \p{w_1-w_2}+w_2\star \p{\dnu_1-\dnu_2}}+\p{w_1^{-1}-w_2^{-1}}\star \bnab\p{w_2\star \dnu_2}.
\end{align*}
Then 
\begin{align*}
\norm{\MC_{w_1}-\MC_{w_2}}_{B^{\p{1-\frac{1}{p}}}}\leq &C(g,K)\norm{\dnu_1\star \p{w_1-w_2}}_{W^{(2),p}}+C(g,K)\norm{w_2\star \p{\dnu_1-\dnu_2}}_{W^{(2),p}}\\ &+C(g,K)\norm{w_1-w_2}_{C^{(0)}}\norm{w_2\star \dnu_2}_{W^{(2),p}}	\\ \leq & C(g,K)\norm{w_1-w_2}_{W^{(2),p}}.
\end{align*}
We may now estimate the remainder of the third boundary operator:
\begin{align*}
&\norm{w_2^{-1}\star k_2\star \p{\MC_{w_1}-\MC_{w_2}}+\p{w_1^{-1}-w_2^{-1}}\star k_1\star \MC_{w_1}+w_2^{-1}\star \p{k_1-k_2}\star \MC_{w_1}}_{B^{\p{1-\frac{1}{p}},p}}\\ & \quad \leq C(g,K)\p{\norm{w_2}_{C^{(2)}}\norm{w_1-w_2}_{W^{(2),p}}+\norm{w_1-w_2}_{C^{(0)}}\norm{w_1}_{C^{(1)}}\norm{w_1}_{W^{(2),p}}+\norm{w_1}_{C^{(1)}}\norm{w_1-w_2}_{W^{(2),p}}} \\ &\quad \leq C(g,K)\norm{w_1-w_2}_{W^{(2),p}}\\ & \quad \leq C(g,K)T^{\frac{1}{p}}\norm{w_1-w_2}_{W^{(4),p}}
\end{align*}
where the final inequality follows from the Sobolev embedding. This was the last piece of the puzzle, and we have finally obtained the estimate 
\begin{align*}\norm{S(w_1)-S(w_2)}_{W^{(4),p}\p{M_T}} &\leq C(g)\norm{F_{w_1}-F_{w_2}}_{W^{(2),p}\p{M_T}}+C(g)\sum_{i=1}^3\norm{\Phi^i_{w_1}-\Phi^i_{w_2}}_{B^{\p{4-i-\frac{1}p},p}\p{\PS_T}}\\ &\leq C\p{g,\tilde{g},K}T^{\gamma_p} \norm{w_1-w_2}_{W^{(4),p}\p{M_T}},\end{align*}
which concludes the proof that we can choose $T$ so small that $S$ becomes a contraction of $M_K^T(g)$. The existence of a fixed point follows, and such a point is, per design, a solution of the Ricci-deTurck IBP (\ref{STERdT}).
\end{proof}

As the solution of a parabolic equation, the fixed point enjoys the the usual regularity properties in the interior $M^\circ_T$ (See e.g. \cite[Proposition 2.5]{Bre}). However, as it is now a boundary value problem, we are of course interested as to what degree of regularity we may expect at the boundary. We highlight this consideration with the following remark:

\begin{remark} Let $u_t$ be the $W^{(4),p}\p{M_T}$ solution of the Ricci-deTurck IBP, constructed in the preceding theorem. The boundary regularity of $u_t$ will a priori only be of class $C^{(3+\alpha)}$, for $\alpha\leq 1-\frac{n}{p}$. The next theorem improves the regularity, under the assumption that the initial metric $g$ satisfies higher order compatibility conditions.
\end{remark}	

\begin{theorem}[Boundary regularity of Ricci-deTurck flow] \label{BrRdt} Let $u\in W^{(4),p}(M_T)$ be a solution of the Ricci-deTurck initial boundary value problem (\ref{STERdT}). If $g$ satisfies compatibility conditions up to order $k\geq 1$ ($k=1$ is what we defined as being RdT-compatible) and $\tilde{g}\in C^{\p{k+3+\alpha}}(M_T)$, then $u\in C^{\p{k+2+\alpha}}(M_T)$.
\end{theorem}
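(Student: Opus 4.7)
I would run a parabolic bootstrap, proceeding by induction on the compatibility order $k$. The base case $k=1$ is immediate: since $u \in W^{(4),p}(M_T)$ with $p > n$, the Sobolev embedding yields $u \in C^{(3+\alpha)}(M_T)$ for $\alpha = 1 - n/p$, which is exactly the target $C^{(k+2+\alpha)}$ at $k=1$.

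For the inductive step, assume $u \in C^{(k+\alpha+1)}(M_T)$ and that $g$ satisfies compatibility conditions up to order $k$. I would freeze the coefficients of the heat-form representation (\ref{heatformRdT}) in $u$ itself, and view $u$ as the solution of the linear parabolic IBP
\begin{equation*}
\partial_t u - \trace_u \bnab^2 u = \mathcal{R}(u, \bnab u, \tilde g, \bnab \tilde g, \bnab^2 \tilde g),
\end{equation*}
supplemented with the three boundary conditions of (\ref{STERdT}) and with initial condition $u|_{t=0} = g$. Under the inductive hypothesis and the assumption $\tilde g \in C^{(k+3+\alpha)}(M_T)$, the coefficient matrix (depending on $u$, $u^{-1}$, $\nu$, and on derivatives of $\tilde g$) lies in $C^{(k+1+\alpha)}$, while the right-hand side $\mathcal{R}$ lies in $C^{(k+\alpha)}$ — the latter because $\mathcal{R}$ depends on $\bnab u$ and on $\bnab^2 \tilde g$, which are of classes $C^{(k+\alpha)}$ and $C^{(k+1+\alpha)}$, respectively. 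A Hölder-class parabolic Schauder estimate of Solonnikov type (the Hölder analogue of Theorem \ref{Sol5.4}) then upgrades $u$ to $C^{(k+2+\alpha)}(M_T)$, provided the linear IBP satisfies compatibility of the appropriate order.

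The principal technical obstacle is this last compatibility verification. The initial traces $u_m := \partial_t^m u|_{t=0}$ are uniquely computable from $g$ and $\tilde g$ by iterated time-differentiation of the flow equation. Since the three boundary operators involve up to first, second, and third total orders of derivatives (the last two carrying one factor of $\partial_t$, matching the Solonnikov orders $\sigma_1=1$, $\sigma_2=2$, $\sigma_3=3$ extracted in the proof of Proposition \ref{upperhalfplane}), the Schauder estimate requires that the $m$-th time derivative of each boundary equation hold at $t=0$ for $m$ in a range dictated by $k$ and the $\sigma_i$. The hypothesis of compatibility to order $k$ is exactly tailored to ensure all of these identities.

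The remaining work is a systematic extension of the Sobolev-norm estimates from the proof of Theorem \ref{STE} to Hölder norms: bounding each term in the coefficient matrix $\trace_u \bnab^2$, in the Bianchi-type field $\xi_t(u)$, in $\mathcal{R}$, and in the nonlinear boundary expressions $\alpha_u(\partial_t u)$ and $\MC_u'(\partial_t u) + \tfrac{1}{2}\varphi_u(\partial_t u)\MC_u$ in the relevant $C^{(k+\alpha)}$ classes. These are product-and-chain-rule calculations, underpinned by a Hölder-class analogue of Lemma \ref{damnedestimates}. None of these is conceptually new beyond the compatibility bookkeeping, which is why I expect the latter to be the real work of the proof.
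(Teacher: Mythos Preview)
Your bootstrap strategy is exactly the one the paper uses, and the base case via Sobolev embedding is handled identically. Two points on execution where the paper differs from your plan are worth noting.

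First, the paper's argument is shorter than you anticipate: rather than reworking the estimates of Theorem~\ref{STE} in H\"older norms globally on $M$, the paper localizes to a single boundary chart and plugs $\hat u$ into the model system~(\ref{reallocalsystem}) with coefficient matrix $h=\hat u+(1-\hat 1)\delta$. The data $(f,\phi^1,\phi^2,\phi^3)$ are written out explicitly (including the commutator terms from the cutoff $\hat\eta$), and one simply reads off that $f\in C^{(m-1+\alpha)}$ and $\phi^3\in C^{(m-2+\alpha)}$ when $u\in C^{(m+\alpha)}$. The compatibility bookkeeping you flag as ``the real work'' is absorbed entirely in the hypothesis on $g$; no further verification is carried out.

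Second, you omit a step the paper makes explicit: the H\"older-class Solonnikov theorem (\cite[Theorem~4.9]{Sol1}) a priori only delivers the improved regularity on a possibly shorter interval $[0,T_0]$. The paper then restarts the system at $t=T_0$ with initial datum $u|_{t=T_0}$ and iterates in time to reach $T$. This is routine but should be mentioned.
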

\begin{proof} Assume $u\in C^{\p{m+\alpha}}\p{M_T}$, for some $3\leq m\leq k+1$. To obtain higher regularity, it is enough to show that $\hat{u}_l\in C^{(m+1+\alpha)}\p{\R^n_+\times [0,T]}$, where $\hat{u}_l$, as usual, is the local representation of $u$ on a boundary chart $U_l$. Since we shall only consider this chart, we ease the notational pressure by omitting the subscripted $l$, writing e.g. $\hat{\eta}$ for the partition of unity function with support in $V_l\subset U_l$ (see Definition \ref{fctspacesonM} for reference).

 Consider system (\ref{reallocalsystem}) with coefficient parameter $h=\hat{u}+\p{1-\hat{1}}\delta$ and input data
\begin{align*}
f_{ij}&=\hat{\eta}\mathcal{R}\p{\hat{u},\bnab\hat{u},\hat{\tilde{g}},\bnab\hat{\tilde{g}},\bnab^2\hat{\tilde{g}}}_{ij}-\hat{u}^{kq}\hat{u}_{ij}\bnab_k\partial_q \hat{\eta}-2\hat{u}^{kq}\bnab_q\hat{u}_{ij}\partial_k\hat{\eta}\\
\phi^1_m&= \p{n-\frac{1}{2}}\partial_m \hat{\eta}\\
\phi^2_{\alpha\beta}&=0\\
 \phi^3 &=-\frac{1}{2(n-1)}\hat{\eta} \hat{u}^{\alpha\beta}\hat{u}^{\gamma \epsilon}\nu^m	\partial_t \hat{u}_{\alpha\beta}\p{\partial_\gamma \hat{u}_{\epsilon m}-\frac{1}{2}\partial_m \hat{u}_{\gamma \epsilon}}-\hat{u}^{\alpha\beta}\nu^m\partial_t \hat{u}_{\beta m}\partial_\alpha \hat{\eta}+\frac{1}{2}\hat{u}^{\alpha\beta}\nu^m\partial_t \hat{u}_{\alpha\beta}\partial_m \hat{\eta}	
\end{align*}
where $\nu^m=-\p{h^{nn}}^{-1/2}h^{mn}$. One may readily verify that 
$$f_{ij}\in C^{\p{m-1+\alpha}}\p{\R^{n}_+\times [0,T]}\qquad \text{and}\qquad \phi^3\in C^{\p{m-2+\alpha}}\p{\partial\R^n_+\times [0,T]}.$$

An application of \cite[Theorem 4.9]{Sol1} implies $u\in C^{(m+1+\alpha)}\p{M_{T_0}} $ for some $0<T_0\leq T$. Solving the same system on $\R^n_+\times [T_0,T]$, with initial condition $\left.u\right|_{t=T_0}$ provides $u\in C^{\p{m+1+\alpha}}\p{M_{T_1}}$ for some $T_0<T_1\leq T$, and it may be repeated all the way up to time $T$. This procedure takes us from $u\in C^{(3+\alpha)}\p{M_{T}}$ to $u\in C^{(k+2+\alpha)}\p{M_{T}}$.
\end{proof}

\pagebreak

\section{Ricci Flow}

We are finally ready to apply the preceding theory to the Ricci flow. We begin by reminding the reader of the (harmonic) map Laplacian, with which we can find suitable diffeomorphisms to take us back and forth between the Ricci- and Ricci-deTurck flows. 

\begin{definition}[The map Laplacian] Let $(M,g)$ and $(N,h)$ be Riemannian manifolds and let $F\in C^\infty \p{M,N}$. If $\{x^{i}\}$ and $\{y^j\}$ are local coordinates on $M$ and $N$ respectively, we define the map Laplacian as
$$\p{\Delta_{g,h}F}^{i}=-\Delta_g F^{i}+g^{jk}\p{\Gamma(h)_{lm}^{i}\circ F}\frac{\partial F^l}{\partial x^j}\frac{\partial F^m}{\partial x^k}=g^{jk} \p{\frac{\partial^2 F^{i}}{\partial x^j\partial x^k}-\Gamma(g)_{jk}^{l}\frac{\partial F^{i}}{\partial x^l}+ \p{\Gamma(h)_{lm}^{i}\circ F}\frac{\partial F^l}{\partial x^j}\frac{\partial F^m}{\partial x^k}} $$
where $F^{i}:=y^{i}\circ F$.	If $M=N$ and $F=\Id$, then
	$$\Delta_{g,\tilde{g}_t}\Id=\xi_t^\sharp(g).$$
\end{definition}

The inherent problem of the Ricci flow - and the reason for first considering the Ricci-deTurck flow - is that it is not strictly parabolic. One way this manifests itself, is in the loss of regularity of solutions. To be a little more precise , if we were to apply the regularity theory of the preceding section to the Ricci flow, we would lose a derivative when compared with the Ricci-deTurck flow. Thus, to be able to apply the uniqueness of the Ricci-deTurck flow solution, we are forced to require that the initial metric satisfies compatibility conditions up to second order. This unfortunate requirement is enshrined in the following remark, where our old friends - the Einstein metrics - also make their final appearance.

\begin{definition} We will say that the initial metric $g_0$ is compatible, if under the Ricci flow it satisfies 
$$\tn{\partial_t^m\p{\partial_t \MC_{g_t}+\frac{1}{2(n-1)}\trace_{g_t^T}\p{\partial_t g_t}\MC_{g_t}}}=0\qquad \qquad m=0,1$$
$$\tn{\partial_t^m \p{\partial_t g_t^T-\frac{1}{n-1}\trace_{{g_t}^T}\p{\partial_t g_t}{g_t}^T}}=0\qquad \qquad m=0,1.$$
\end{definition}
\begin{remark}\label{compatiblemetric} 
For an Einstein initial metric $g$ with constant $\mu$, the first variation of mean curvature, Lemma \ref{MCfirstvar}, gives 
$$\tn{\partial_t\MC_{g_t}}=\mu \MC_g$$
showing the first condition is satisfied for $m=0$. The second is trivially satisfied for $m=0$ and $m=1$, for all Einstein metrics. Furthermore, 
$$\tn{\partial^2_t\MC_{g_t}}=6\mu\tn{\partial_t\MC_{g_t}}-3\mu^2\MC_g=3\mu^2\MC_g$$
by the $0$'th order version of the first condition. Similarly
$$\tn{\partial_t \trace_{g_t^T}\p{\partial_t g_t}\MC_{g_t}}=2\mu\trace_{g^T}\p{-2\Ric_g}\MC_g+\trace_{g^T}\p{-2\Ric_g}\tn{\partial_t\MC_{g_t}}=-6(n-1)\mu^2\MC_g$$
showing that Einstein metrics are indeed compatible.
\end{remark}

The final two theorems - which were presented in the introduction - mimic theorems \ref{STE} and \ref{BrRdt} for the Ricci-deTurck flow.

\begin{theorem}[Existence and uniqueness of Ricci flow] \label{existanduniq} Let $g_0$ be a compatible smooth Riemannian metric, then there exist a unique solution to the IBP
\begin{equation}\label{ricciflowibp}
\begin{cases}
\partial_t g_t=-2\Ric_{g_t} & \text{in } M_T\\
\begin{rcases}
	\MC'_{g_t}\p{\partial_t g_t}+\frac{1}2\varphi_{g_t}\p{\partial_t g_{t}}\MC_{g_t}=0\\
	\p{\partial_t g_t}^T-\varphi_{g_t}\p{\partial_t g_t}g_t^T=0
\end{rcases} &\text{on } \partial M_T\\
\left.g_{t}\right|_{t=0}=g_0
\end{cases}
\end{equation}	
such that $g_t\in W^{(4),p}\p{M_T}$.
\end{theorem}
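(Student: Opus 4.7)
The plan is to apply the DeTurck trick: reduce the Ricci flow IBP (\ref{ricciflowibp}) to the strictly parabolic Ricci-deTurck IBP (\ref{STERdT}) handled in Theorem \ref{STE}, and transfer the solution back by a family of gauge diffeomorphisms. If $u_t$ solves (\ref{STERdT}) and $\phi_t$ is a well-chosen family of self-diffeomorphisms of $M$ with $\phi_0 = \mathrm{Id}_M$, then $g_t := \phi_t^* u_t$ will solve the Ricci flow. The structural point making everything work is that the RdT boundary condition $\xi_t(u) = 0$ on $\partial M$ forces the gauge diffeomorphism $\phi_t$ to restrict to the identity on $\partial M$, which in turn makes the remaining boundary conditions transfer directly between the two IBPs.

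For existence, I would first invoke Theorem \ref{STE} to obtain $u_t \in W^{(4),p}(M_T)$ with $u_0 = g_0$; the RdT-compatibility required there is implied by the stronger compatibility hypothesis of the current theorem. Setting $X_t := \xi_t^{\sharp}(u_t)$, the boundary condition $\xi_t(u) = 0$ gives $X_t|_{\partial M} \equiv 0$, so the non-autonomous ODE $\partial_t \phi_t = X_t \circ \phi_t$ with $\phi_0 = \mathrm{Id}_M$ produces diffeomorphisms $\phi_t : M \to M$ which fix $\partial M$ pointwise. Using the identity $2\delta_{u}^{*}\xi_t(u) = \mathcal{L}_{\xi_t^\sharp(u)} u$, a direct calculation shows $\partial_t(\phi_t^* u_t) = -2\Ric_{\phi_t^* u_t}$ in the interior. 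Because $\phi_t|_{\partial M} = \mathrm{Id}$, it acts on boundary data as an isometry fixing every point, so the equalities $g_t^T = u_t^T$, $\MC_{g_t} = \MC_{u_t}$, $\varphi_{g_t}(\partial_t g_t) = \varphi_{u_t}(\partial_t u_t)$ and $\MC'_{g_t}(\partial_t g_t) = \MC'_{u_t}(\partial_t u_t)$ all hold on $\partial M$; the two RdT boundary equations in (\ref{STERdT}) therefore translate verbatim into the two Ricci flow boundary equations in (\ref{ricciflowibp}).

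For uniqueness, given two solutions $g_t^1, g_t^2 \in W^{(4),p}(M_T)$ with the same initial datum $g_0$, I would reverse the construction: find diffeomorphisms $\phi_t^i$ of $M$ with $\phi_0^i = \mathrm{Id}$, fixing $\partial M$ pointwise, such that $\tilde u_t^i := (\phi_t^i)^* g_t^i$ solves the RdT IBP (\ref{STERdT}). Such $\phi_t^i$ arise as solutions of a strictly parabolic IBP associated with the map Laplacian $\Delta_{g_t^i, \tilde g_t}$, i.e.\ the harmonic map heat flow, with boundary conditions enforcing $\phi_t^i|_{\partial M} = \mathrm{Id}$ and $\xi_t(\tilde u_t^i) = 0$; this system falls under the linear parabolic theory assembled in Section \ref{rdtsection}. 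Once both $\tilde u_t^i$ are known to lie in $W^{(4),p}(M_T)$ with the common initial datum $g_0$, the uniqueness half of Theorem \ref{STE} forces $\tilde u_t^1 = \tilde u_t^2$, and combined with uniqueness of the harmonic-map flow this yields $\phi_t^1 = \phi_t^2$ and hence $g_t^1 = g_t^2$.

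The main obstacle is bookkeeping the loss of one spatial derivative when passing from RdT to Ricci flow via the DeTurck trick. The vector field $X_t = \xi_t^\sharp(u_t)$ involves one derivative of $u_t$, so the diffeomorphism $\phi_t$ inherits only three spatial derivatives' worth of regularity from $u_t \in W^{(4),p}$, and the pullback $\phi_t^* u_t$ threatens to slip out of $W^{(4),p}$. Compensating for this is what the stronger compatibility hypothesis (conditions at \emph{both} $m = 0$ and $m = 1$ of the preceding definition) buys us: it allows Theorem \ref{BrRdt} to upgrade $u_t$ to $C^{(4+\alpha)}$ up to the boundary, which is enough regularity to keep $g_t = \phi_t^* u_t$ in $W^{(4),p}(M_T)$. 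The same regularity bookkeeping reappears when solving the harmonic-map-heat-flow IBP for the uniqueness argument, and navigating both instances carefully is the most delicate part of the proof.
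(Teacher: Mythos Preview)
Your overall strategy matches the paper exactly: obtain a Ricci--deTurck solution $u_t$ from Theorem~\ref{STE}, integrate the DeTurck field to get boundary-fixing diffeomorphisms $\phi_t$, pull back for existence; for uniqueness, push forward two Ricci flows via harmonic-map heat flows to a common RdT flow and invoke the uniqueness half of Theorem~\ref{STE}. The boundary bookkeeping you describe is correct.

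The gap is in your regularity fix. You claim that upgrading $u_t$ to $C^{(4+\alpha)}$ via Theorem~\ref{BrRdt} is ``enough regularity to keep $g_t = \phi_t^*u_t$ in $W^{(4),p}(M_T)$''. It is not: if $u_t\in C^{(4+\alpha)}$ then $\xi_t^\sharp(u_t)$, which involves $\nabla u_t$, is only $C^{(3+\alpha)}$, so the flow $\phi_t$ has only three spatial derivatives and the pullback $\phi_t^*u_t$ drops to $C^{(2+\alpha)}$. Upgrading $u_t$ alone cannot recover the lost derivative in $\phi_t$. What the paper actually does (in Theorem~\ref{compandreg}, to which the existence proof defers) is observe that $\xi$ satisfies its own strictly parabolic system
\[
\partial_t\xi + \Delta_u\xi = \Ric_u\cdot\xi^\sharp + P,
\]
obtained by applying the contracted differential Bianchi identity $\beta_u(\Ric_u)=0$ to $\partial_t\xi = \beta_u(\partial_t u) + \text{l.o.t.}$ This bootstraps $\xi$ (and hence $\phi_t$) to the \emph{same} parabolic regularity as $u_t$, so only one derivative is lost in the pullback rather than two.

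The identical mechanism is what makes the uniqueness argument close. To apply Theorem~\ref{STE} you need $(\psi_{i,t})_* g_{i,t}\in W^{(4),p}$, which requires $\psi_{i,t}\in W^{(6),p}$, which in turn requires the harmonic-map heat flow (\ref{harmheat}) to satisfy compatibility up to order~$4$. Zeroth and first order are automatic from $\psi_{i,\tau}=\Id$ and $\partial_t\psi_{i,t}|_{t=\tau}=\xi^\sharp_\tau(g_\tau)=0$; the crucial second-order check is
\[
\partial_t^2\psi_{i,t}\big|_{t=\tau} = -\,g_\tau^{jk}\,\partial_t\Gamma(g_{i,t})^l_{jk}\big|_{t=\tau} = -2\,\beta_{g_\tau}(\Ric_{g_\tau}) = 0,
\]
again by the Bianchi identity. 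You allude to ``navigating both instances carefully'' but do not name this ingredient; without it the argument does not go through at the stated regularity.
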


\begin{proof} Choose a smooth family of background metrics $\tilde{g}_t$, with respect to which we define the deTurck Field. Let $u_t$ be the solution of (\ref{STERdT}) and, for some $t_0\geq 0$, solve the ODE
\begin{equation}\label{deturckode}\begin{cases}
\partial_t \psi_t(p)=\left.\xi_t^\sharp\p{u_t}\right|_{\psi_t(p)}, &  t\in  [0,T]\\
\psi_{t_0}(p)=p	
\end{cases}\end{equation}
 Consider the pullback $g_t:=\psi_t^*u_t$. Then
$$\partial_t g_t=\psi_t^*\p{\partial_t u_t}+\left.\pp{s}\right|_{s=0}\psi^*_{t+s}u_t=\psi_t^*\p{-2\Ric_{u_t}-\Lie_{\xi_t^\sharp(u_t)}u_t}+\Lie_{\p{\psi_t^{-1}}_*\xi_t^\sharp\p{u_t}}\psi_t^*u_t=-2\Ric_{g_t}.$$
Furthermore, the mean curvature is invariant under diffeomorphisms that fix the boundary, so $g_t$ satisfies the same boundary conditions as $u_t$. The regularity of $g_t$ follows from Theorem \ref{compandreg}.  \\

\noindent For uniqueness, let $g_{1,t},g_{2,t}\in W^{(4),p}\p{M_T}$ be solutions of the Ricci flow IBP. Define $\tau:=\inf\left\{t\in [0,T]\;;\; g_{1,t}\neq g_{2,t}\right\}$ and note that we must have $g_{1,\tau}=g_{2,\tau}=:g_\tau$. Solve the harmonic map heat flow
\begin{equation}\label{harmheat}\begin{cases}	\partial_t\psi_{i,t}(p)=\Delta_{g_{i,t},g_{\tau}}\psi_{i,t}(p)=\left.\xi_\tau^\sharp\p{\p{\psi_{i,t}}_* g_{i,t}}\right|_{\psi_{i,t}(p)}\\
\left.\psi_{i,t}\right|_{\partial M}=\Id_{\partial M}\\
\left.\psi_{i,t}\right|_{t=\tau}=\Id_{M}
\end{cases}
\end{equation}
By standard parabolic theory, this is possible on $M\times [\tau,\tau+\varepsilon)$ for some some $\varepsilon>0$. Clearly, $\ttau{\partial_t\psi_{i,t}}=\xi^\sharp_\tau\p{g_{\tau}}=0$, and since $\ttau{\partial_t g_{i,t}}=-2\Ric_{g_{\tau}}$ for both $i$,
$$\ttau{\partial_t^2 \psi_{i,t}}=\ttau{\partial_t \Delta_{g_{i,t},g_\tau}\psi_{i,t}}=-g_{\tau}^{jk}\ttau{\partial_t \Gamma\p{g_{i,t}}_{jk}^{i}}=-2\beta_{g_\tau}\p{\Ric_{g_\tau}}=0.$$
It follows that (\ref{harmheat}) satisfies compatibility conditions up to order $4$. This implies $\psi_{i,t}\in W^{(6),p}\p{M\times[\tau,\tau+\varepsilon)}$ and thus $u_{i,t}:=\p{\psi_{i,t}}_*g_{i,t}\in W^{(4),p}\p{M\times[\tau,\tau+\varepsilon)}$ are solutions of the Ricci-deTurck flow. By the uniqueness of such solutions, $u_{1,t}=u_{2,t}$ and $\psi_{1,t}$ and $\psi_{2,t}$ therefore satisfy the same ODE (\ref{harmheat}), implying $\psi_{1,t}=\psi_{2,t}$ and by extension $g_{1,t}=\p{\psi_{1,t}}^*u_{1,t}=\p{\psi_{2,t}}^*u_{2,t}=g_{2,t}$ for $t\in [\tau,\tau+\varepsilon)$. This contradiction proves equality all the way to $T$, by iterating the argument.
\end{proof}

\begin{theorem}[Compatibility conditions and regularity of Ricci flow]\label{compandreg} Let $g_t$ be a solution to the Ricci flow IBP (\ref{ricciflowibp}) for a smooth initial metric $g_0$ satisfying 
\begin{equation}\label{compcond1}\left.\partial_t^m\p{\partial_t \MC_g+\frac{1}{2(n-1)}\trace_{g^T}\p{\partial_t g}\MC_g}\right|_{t=0}=0\qquad \qquad m=0,\ldots,k-1\end{equation}
\begin{equation}\label{compcond2}\left.\partial_t^m\p{\partial_t g^T-\frac{1}{n-1}\trace_{g^T}\p{\partial_t g}g^T}\right|_{t=0}=0\qquad \qquad m=0,\ldots,k\end{equation}
Then $g_t\in C^{(2k+1+\alpha)}\p{M_T}$. If (\ref{compcond2}) only holds for $m\leq k-1$, then $g_t\in W^{(2k),p}\p{M_T}$.
\end{theorem}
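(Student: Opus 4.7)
The plan is to apply the DeTurck trick as in Theorem \ref{existanduniq}, writing $g_t = \psi_t^* u_t$ where $u_t$ solves the Ricci-deTurck IBP (\ref{STERdT}) with a sufficiently smooth background $\tilde{g}_t$ and $\psi_t$ is the diffeomorphism family generated by (\ref{deturckode}). Since $\psi_t$ fixes $\partial M$ pointwise, the induced boundary metric and the mean curvature are preserved under the pullback, so $g_t^T = u_t^T$ and $\MC_{g_t} = \MC_{u_t}$ on $\partial M$, and the boundary equations of (\ref{STERdT}) and (\ref{ricciflowibp}) will match.

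First I would show that the Ricci flow compatibility hypotheses (\ref{compcond1}) for $m \leq k-1$ and (\ref{compcond2}) for $m \leq k$ translate into Ricci-deTurck compatibility of order $2k-1$, in the sense required for Theorem \ref{BrRdt}. This proceeds inductively in $m$: each $\partial_t^m u_t|_{t=0}$ is expressible in terms of the time-derivatives $\partial_t^j g_t|_{t=0}$ of the Ricci flow solution (themselves polynomial in $g_0, \Ric_{g_0}, \nabla \Ric_{g_0}, \ldots$ after repeatedly differentiating $\partial_t g_t = -2\Ric_{g_t}$) together with the derivatives $\partial_t^j \psi_t|_{t=0}$ of the DeTurck diffeomorphism (computable from (\ref{deturckode}) starting from $\psi_0 = \Id$ and $\partial_t \psi_t|_{t=0} = \xi_0^\sharp(g_0)$). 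Matching the principal parts of the boundary operators of (\ref{STERdT}) with those of (\ref{ricciflowibp}) order by order then yields the required RdT-compatibility.

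With this in hand, Theorem \ref{BrRdt} gives $u_t \in C^{(2k+1+\alpha)}(M_T)$ provided $\tilde{g}_t$ is taken smooth enough. Next I would analyze the regularity of the flow $\psi_t$ of $\xi_t^\sharp(u_t)$: classical ODE theory together with the Solonnikov embedding (Theorem \ref{SolEmb}) shows that $\psi_t$ inherits the spatial and temporal regularity of $u_t$, so $\psi_t \in C^{(2k+1+\alpha)}(M_T)$ as well. Since $g_t = \psi_t^* u_t$ is then the pullback of a $C^{(2k+1+\alpha)}$ tensor by a diffeomorphism of the same class, the chain rule yields $g_t \in C^{(2k+1+\alpha)}(M_T)$. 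The weaker Sobolev statement, when (\ref{compcond2}) holds only for $m \leq k-1$, follows by the same argument using the parabolic estimate (\ref{parabolicest}) in $W^{(2k),p}$ in place of Theorem \ref{BrRdt}.

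The main obstacle will be the first step: precisely matching compatibility orders between the Ricci and Ricci-deTurck IBPs. The asymmetry between $m \leq k-1$ in (\ref{compcond1}) and $m \leq k$ in (\ref{compcond2}) mirrors the fact that the mean-curvature-evolution boundary operator in (\ref{ricciflowibp}) carries one more time derivative than the conformal-class boundary operator. Correctly tracking this asymmetry through the assignment of the orders $\sigma_i$ in Definition \ref{conditions}, and verifying that the DeTurck field $\xi_t(u_t)$ and its time derivatives vanish on $\partial M$ to the matching order at $t=0$, will be the technical heart of the translation.
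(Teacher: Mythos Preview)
Your overall strategy matches the paper's, but there is a genuine gap in the regularity bookkeeping for $\psi_t$, and a related undercount in the compatibility order for $u_t$.

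The main problem is the step ``classical ODE theory \ldots shows that $\psi_t$ inherits the spatial and temporal regularity of $u_t$''. The DeTurck field $\xi_t^\sharp(u_t)$ involves $\nabla u_t$, so if you only have $u_t\in C^{(2k+1+\alpha)}$ then $\xi_t^\sharp(u_t)\in C^{(2k+\alpha)}$, and integrating the ODE gives at best $\psi_t\in C^{(2k+\alpha)}$. The pullback $g_t=\psi_t^\ast u_t$ then loses another spatial derivative (it contains $\partial\psi_t$), so you would land in $C^{(2k-1+\alpha)}$, two orders short. Even your final chain-rule sentence is off: pulling back a $C^{(2k+1+\alpha)}$ tensor by a diffeomorphism of the \emph{same} class does not stay in that class. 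The paper circumvents this by deriving a separate parabolic equation for $\xi$ itself: using the contracted Bianchi identity one gets
\[
\partial_t\xi+\Delta_u\xi=\Ric_u\!\cdot\!\xi^\sharp+P,\qquad \xi|_{\partial M_T}=0,\qquad \xi|_{t=0}=0,
\]
with $P\in C^{(2k+\alpha)}$, and checks compatibility to order $2k$; Solonnikov's theory then yields $\xi\in C^{(2k+2+\alpha)}$ directly, hence $\psi_t\in C^{(2k+2+\alpha)}$ and $g_t=\psi_t^\ast u_t\in C^{(2k+1+\alpha)}$.

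The secondary issue is that you only claim RdT-compatibility of order $2k-1$. The paper obtains order $2k$, and this is not automatic: it requires choosing the background family $\tilde{g}_t$ so that $\partial_t^m\tilde{g}_t|_{t=0}=\partial_t^m g_t|_{t=0}$ for $m=2,\ldots,k+1$ (and $\partial_t\tilde{g}_t|_{t=0}=0$). With this choice one proves inductively, again via the Bianchi identity, that $\partial_t^m u_t|_{t=0}=\partial_t^m g_t|_{t=0}$ for $m\le k+1$ and hence $\partial_t^m\xi_t(u_t)|_{t=0}=0$ for $m\le k$. This both upgrades $u_t$ to $C^{(2k+2+\alpha)}$ and supplies the compatibility needed for the parabolic problem for $\xi$ above. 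Your inductive sketch via $\psi_t$ does not isolate this mechanism; in particular, the vanishing of $\partial_t^m\xi|_{t=0}$ is what makes the whole derivative count close.
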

\begin{proof} Choose a family of background metrics $\tilde{g}_t\in C^{(2k+3+\alpha)}\p{M_T}$ satisfying 
$$\tn{\tilde{g}_t}=g_0,\qquad \tn{\partial_t\tilde{g}_t}=0,\qquad \tn{\partial_t^m\tilde{g}_t}=\tn{\partial_t^m g_t} \quad \text{for }m=2,\ldots, k+1. $$ 
Define the deTurck Field with respect to this family of metrics and let $u_t$ be the solution of the corresponding Ricci-deTurck IBP (\ref{STERdT}) for the same initial metric. We now claim that $\tn{\partial_t^m u_t}=\tn{\partial_t^m g_t}$ for $m=0,\ldots,k+1$. It is certainly true for $m=0$, since $\tn{u_t}=g_0=\tn{g_t}$. Assuming it's also true for $m=1,\ldots, l-1$, for an $l\leq k+1$, we see that the difference in Christoffel symbols satisfy
\begin{equation}\label{partialdtvf}\tn{\partial_t^mT\p{\tilde{g}_t,u_t}}=0\qquad \text{for }m=2,\ldots,l-1\end{equation}
since $\tn{\partial^m_t u_t}=\tn{\partial_t^m \tilde{g}_t}$ for those $m$. By the contracted differential Bianchi identity, (\ref{partialdtvf}) also holds for $m=1$. It follows directly from the definition of the deTurck field, that
$$\tn{\partial_t^m \xi_t\p{u_t}}=0\qquad \text{for }m=0,\ldots,l-1 $$
from which follows, by induction:
$$\tn{\partial_t^l u_t}=\tn{\partial_t^{l-1}\p{-2\Ric_{u_t}-\Lie_{\xi_t\p{u_t}}u_t}}=-2\tn{\partial_t^{l-1}\Ric_{u_t}}=-2\tn{\partial_t^{l-1}\Ric_{g_t}}=\tn{\partial_t^{l}g_t}.$$
Thus $u_t$ satisfies 
$$\tn{\partial_t^m\xi_t(u_t)}=0,\qquad \qquad m=0,\ldots, k$$
$$\tn{\partial_t^m\p{\partial_t u_t^T-\frac{1}{n-1}\trace_{u_t^T}\p{\partial_t u_t}u_t^T}}=0,\qquad \qquad m=0,\ldots, k$$
$$\tn{\partial_t^m\p{\partial_t \MC_{u_t}+\frac{1}{2(n-1)}\trace_{u_t^T}\p{\partial_t u_t}\MC_{u_t}}}=0,\qquad \qquad m=0,\ldots,k-1$$
which are exactly compatibility conditions up to order $2k$, required for $u_t\in C^{(2k+2+\alpha)}\p{M_T}$. The deTurck field $\xi:=\xi_t(u_t)$ satisfies the variational formula
$$\partial_t\xi_i=\beta_u\p{h}_i+\p{h_{ij}u^{kl}-u_{ij}u^{km}u^{lp}h_{mp}}T\p{\tilde{g},u}_{kl}^j+\frac{1}{2}u_{ij}u^{kl}\partial_t\tilde{\Gamma}_{kl}^j=\beta_u\p{h}_i+P\p{u,\nabla u,\nabla^2 u,\tilde{g},\tilde{\nabla}\tilde{g},\partial_t\tilde{\nabla}\tilde{g}}_i$$
where $h=\partial_t u=-2\Ric_u-\Lie_{\xi^\sharp}u=-2\Ric_u-2\delta^*_u\xi$, $\nabla$ is the connection of $u$ and $P$ is some linear combination of contractions of the given entries. Applying the contracted differential Bianchi identity
$$\partial_t \xi=-2\beta_u\p{\Ric_u+\delta^*_u\xi}+P=-2\beta_u\p{\delta^*_u\xi}+P.$$
The Ricci formulae then gives 
$$-2\beta_u\p{\delta^*_u\xi}_i=-2\p{\delta_u\delta^*_u\xi}_i+\nabla_i\delta_u\xi=\nabla^{j}\p{\nabla_j\xi_i+\nabla_i\xi_j}-\nabla_i\nabla^j\xi_j=\nabla^j\nabla_j\xi_i+\Ric_{ij}\xi^j.$$
Thus, we have shown that $\xi$ satisfies the parabolic IBP
\begin{equation}\label{deturckipb}
\begin{cases}\partial_t\xi +\Delta_u \xi=\Ric_u\cdot \xi^\sharp+P & \text{in } M_T\\
\xi=0 & \text{on }\partial M_T\\ 
\tn{\xi}=0
\end{cases}
\end{equation}
Since $\tn{\partial_t^m u}=\tn{\partial_t^m \tilde{g}}$ for $m=0,\ldots, k+1$, it follows that the system (\ref{deturckipb}) satisfies compatibility conditions up to order $2k$. Since $P\in C^{\p{2k+\alpha}}\p{M_T}$, we can use \cite[Theorem 4.9]{Sol1} to obtain $\xi\in C^{\p{2k+2+\alpha}}\p{M_T}$.\\

\noindent The solution $\psi_t$ of (\ref{deturckode}) will now be in $C^{\p{2k+2+\alpha}}\p{M_T}$, which means $g_t=\psi_t^*u_t\in C^{\p{2k+1+\alpha}}\p{M_T}$. If (\ref{compcond2}) holds up to $k-1$, we have $u_t \in W^{(2k+2),p}\p{M_T}$ (since this only requires compatibility up to order $ 2k-1$) and thus $g_t\in W^{(2k),p}\p{M_T}$, by the same argument.
\end{proof}

\pagebreak

\end{document}